\colorlet{darkblue}{blue!90!black}
\colorlet{darkred}{red!90!black}
\definecolor{Green}{rgb}{0.01, 0.75, 0.24}
\newcommand{\dd}{\,\mathrm{d}}
\numberwithin{equation}{section}
\newcommand{\be}{\begin{eqnarray}}
\newcommand{\ee}{\end{eqnarray}}
\newcommand{\ce}{\begin{eqnarray*}}
\newcommand{\de}{\end{eqnarray*}}
\newtheorem{theorem}{Theorem}[section]
\newtheorem{lemma}[theorem]{Lemma}
\newtheorem{proposition}[theorem]{Proposition}
\newtheorem{corollary}[theorem]{Corollary}
\theoremstyle{remark}
\newtheorem{assumption}[theorem]{Assumption}
\newtheorem{example}[theorem]{Example}
\newtheorem{remark}[theorem]{Remark}
\newtheorem{definition}[theorem]{Definition}
\crefname{eqn}{Equation}{Equations}
\crefname{assumption}{Assumption}{Assumptions}
\crefname{innercustomthm}{Condition}{Conditions}
\def\<{{\langle}}
\def\>{{\rangle}}
\def\({{\Big(}}
\def\){{\Big)}}
\def\bx{{\mathbf{x}}}
\def\={&\!\!=\!\!&}
\def\bt{\begin{theorem}}
\def\et{\end{theorem}}
\def\bl{\begin{lemma}}
\def\el{\end{lemma}}
\def\br{\begin{remark}}
\def\er{\end{remark}}
\def\bd{\begin{definition}}
\def\ed{\end{definition}}
\def\bp{\begin{proposition}}
\def\ep{\end{proposition}}
\def\bc{\begin{corollary}}
\def\ec{\end{corollary}}
\def\bx{\begin{example}}
\def\ex{\end{example}}
\def\cD{{\mathcal D}}
\def\cF{{\mathcal F}}
\def\cH{{\mathcal H}}
\def\chh{\cH}
\def\cM{{\mathcal M}}
\def\cS{{\mathcal S}}
\def\mB{{\mathbb B}}
\def\mD{{\mathbb D}}
\def\mE{{\mathbb E}}
\def\E{\mE}
\def\mN{{\mathbb N}}
\def\mP{{\mathbb P}}
\def\mR{{\mathbb R}}
\def\geq{\geqslant}
\def\leq{\leqslant}
\def\frD{{\mathfrak{D}}}
\newcommand{\R}{{\mathbb R}}
\newcommand{\N}{{\mathbb{N}}}
\newcommand{\norm}[1]{{\left\vert\kern-0.25ex\left\vert\kern-0.25ex\left\vert #1
    \right\vert\kern-0.25ex\right\vert\kern-0.25ex\right\vert}}
\DeclareMathOperator*{\esssup}{ess\,sup}
\begin{document}
	\title[SDEs]{The Milstein scheme for singular SDEs with H\"older continuous drift}

	\date{\today}
\author{M\'at\'e Gerencs\'er\,\orcidlink{0000-0002-7276-7054}, Gerald Lampl and Chengcheng Ling\,\orcidlink{0000-0002-0106-962X}}

\address{
Technische Universit\"at Wien, Wiedner Hauptstra\ss e 8-10,
 1040 Vienna, Austria}
\email{ \{mate.gerencser,gerald.lampl,chengcheng.ling\}@asc.tuwien.ac.at }
%\email{\{\}@asc.tuwien.ac.at}
%\email{\{\}@asc.tuwien.ac.at}

	\begin{abstract}
We study the $L^p$ rate of convergence of the Milstein scheme for SDEs when the drift coefficients possess only H\"older regularity. If the diffusion is elliptic and sufficiently regular, we obtain rates consistent with the additive case. The proof relies on regularisation by noise techniques, particularly stochastic sewing, which in turn requires (at least asymptotically) sharp estimates on the law of the Milstein scheme, which may be of independent interest.
		
		\bigskip
		
		\noindent {{\sc Mathematics Subject Classification (2020): Primary 60H35, 60H10; Secondary 60H50,
60L90, 35B65.}
		
		}

		\noindent{{\sc Keywords:}  Singular SDEs; Malliavin calculus; strong approximation; Milstein scheme; regularisation by noise;
stochastic sewing; Zvonkin's transformation.}
	\end{abstract}
	
	\maketitle
\section{Introduction}
The term \emph{regularisation by noise} classically refers
to well-posedness of stochastic differential equations (SDEs)
\begin{align}
    \label{eq:SDE}
    dX_t=b(X_t)\dd t+\sigma(X_t)\dd W_t,\quad X_0=x_0,
\end{align}
beyond the (stochastic version of the) Cauchy-Lipschitz theorem: at the price of some nondegeneracy assumption on $\sigma$, the classical Lipschitz continuity condition on $b$ can be dramatically reduced \cite{Zv, Ver,KR,Zhang2011}.
Recently numerous studies focused on leveraging these regularisation effects in the analysis of approximation of SDEs with irregular coefficients \cite{GR11, LS18,MGY20, GK21,  HRZ, JM21, BHY}.
From the long list of recent works \cite{GK, BDG,BDG1,DGL, HJK12, LL,MY-Milstein,MY-Lower,LS17,NSz,PSS,PT,Y-1,AGI,  LM10} and references therein,
let us highlight two features.
  On the one hand, when $b$ is merely bounded, then with nondegenerate and sufficiently regular $\sigma$, the Euler-Maruyama scheme is shown to converge in $L^p$ with rate $1/2$ in \cite{DGL}. On the other hand, if the noise is additive (i.e. $\sigma$ is constant and nondegenerate), then this can be improved: if in addition, $b$ has regularity $\alpha\in(0,1)$ in either a H\"older or a Sobolev sense (with sufficiently high integrability), $L^p$ rate $(1+\alpha)/2$ is proved \cite{BDG, DGL}.
A natural question is whether the rates beyond $1/2$ can also be achieved in the multiplicative case.
As far as weak convergence is concerned, this was affirmatively answered in \cite{Holland} in the case of H\"older $b$. For strong convergence, however, the rate $1/2$ is known to be sharp for the Euler-Maruyama scheme, see e.g. \cite{KPS94, JP, MT}, and therefore higher order methods are needed to even hope for a superior rate. The goal of this paper is to show that the standard Milstein scheme does achieve the $(1+\alpha)/2$ rate in the full range $\alpha\in(0,1)$ in the H\"older  drift case.

The Milstein scheme for \eqref{eq:SDE} is defined as
\begin{align}
   \label{eq:Milstein-scheme-SDE}
   dX_t^n=b(X_{k_n(t)}^n)\dd t+\Big(\sigma(X_{k_n(t)}^n)+\nabla\sigma\sigma(X_{k_n(t)}^n)(W_t-W_{k_n(t)})\Big)\dd W_t,\quad X_0^n=x_0^n,
\end{align}
with $k_n(t)=\frac{\lfloor nt\rfloor}{n}$, $n\in\mathbb{N}$, $t\in[0,1]$.
 This scheme was originally designed by Milstein \cite{Milstein} to produce a $O(n^{-1})$-error (in $L^p$) for SDEs with $C^2$ coefficients, just like the standard Euler scheme for deterministic ODEs. However in the framework of SDEs, as we see in \eqref{eq:Milstein-scheme-SDE}, such scheme is of second order compared to the Euler scheme.
This can pose challenges in implementation, see \cite[Section~7.5.2]{Pages}, but since these issues are well studied in the literature, we only focus on the error analysis of the scheme.
 \\

In the sequel $(X_t)_{t\in[0,1]}$ and $(X_t^n)_{t\in[0,1]}$ are $\R^d$-valued stochastic processes, $b:\R^d\to\R^d$, $\sigma:\R^d\to\R^{d\times d_1}$, and $W$ is a $d_1$-dimensional standard Brownian motion.
The dimensions $d$ and $d_1$ are in principle arbitrary, but the ellipticity condition will imply $d_1\geq d$ automatically.
The way products of matrices and higher order tensors are understood is always clear from the context, so we often omit indices. To illustrate this (and to make the scheme itself completely precise): the $(i,k)$-th coordinate of the $d\times d_1$ matrix $\nabla\sigma\sigma(X_{k_n(t)}^n)(W_t-W_{k_n(t)})$ is given by
\begin{equ}
\sum_{j=1}^{d_1}\sum_{\ell=1}^d\partial_j\sigma^{ik}\sigma^{j\ell}(X^{(n)}_{k_n(t)})\big(W^{\ell}_t-W^{\ell}_{k_n(t)}\big).
\end{equ}

We now state our assumptions and main result with assuming some standard notation; all of which is defined precisely in the part \emph{Notation} below.
One important notion is the nondegeneracy of the noise (though this can be relaxed on regions where the drift is regular, see \cite[Section~1.4]{DGL}): we say a matrix-valued function $A:\mR^d\mapsto\mR^{d\times d}$ is \emph{uniformly elliptic} if there exists a $\lambda>0$ such that for all $x,\xi\in\R^d$
\begin{align}\label{def-uni-ellip}
\lambda|\xi|^2\leq\langle A(x)\xi,\xi\rangle\leq \lambda^{-1}|\xi|^2.
\end{align}
  \begin{assumption}
\label{ass:main}
 For some $\alpha\in(0,1]$, assume
\begin{itemize}
    \item[($H^b$)]
      $b\in\mathcal{C}^\alpha$
      \item[($H^\sigma$)]
     % \begin{itemize}
      %   \item[($H_1^\sigma$)]
      $\sigma\sigma^*$ is uniformly elliptic  and $\sigma\in\mathcal{C}^{3}$.
\end{itemize}
\end{assumption}
It is well-known (see e.g. \cite{Ver}) that under  \cref{ass:main} a unique strong solution to \eqref{eq:SDE} exists.
Our main result is as follows.
\begin{theorem}\label{thm:main}
 Let $(X_{t})_{t\in[0,1]}$ and $(X_{t}^n)_{t\in[0,1]}$ be the solutions to \eqref{eq:SDE} and \eqref{eq:Milstein-scheme-SDE} correspondingly.  If \cref{ass:main} holds, then, for all $n\in\mN$, for any $p\geq 1$,
 for all $\epsilon>0$,  the bound
\begin{align}\label{est:Milstein-bounds-Holder}
  \big\Vert \sup_{t\in[0,1]}|X_t-X_t^n|\big\Vert_{L^p_\omega}\leq N|x_0-x_0^n|+Nn^{-\frac{1+\alpha}{2}+\epsilon}
\end{align}
holds, where the constant $N$ depends on $\Vert b\Vert_{\mathcal{C}^\alpha},\Vert\sigma\Vert_{\mathcal{C}^{3}},\alpha,p,d,d_1,\lambda$, $\epsilon$.
\end{theorem}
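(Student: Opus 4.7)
The strategy combines Zvonkin's transformation, which eliminates the irregular drift, with the stochastic sewing lemma applied at the level of the scheme, which in turn relies on sharp density bounds for $X^n$ obtained via Malliavin calculus.

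First I would set up Zvonkin's transform. Let $u\in C^{1,2+\alpha}$ solve the backward PDE $(\partial_s+\mathcal{L})u+b=0$ on $[0,1]\times\mathbb{R}^d$ with $u(1,\cdot)=0$, where $\mathcal{L}=\tfrac12\operatorname{tr}(\sigma\sigma^\ast\nabla^2)+b\cdot\nabla$. Standard Schauder estimates together with \cref{ass:main} provide a solution with $\nabla^2 u\in\mathcal{C}^\alpha$, and on a short subinterval $[T-\delta,T]\subset[0,1]$ one has $\|\nabla u\|_\infty\leq 1/2$, so $\phi(s,x):=x+u(s,x)$ is a uniformly bi-Lipschitz $C^{2+\alpha}$ diffeomorphism of $\mathbb{R}^d$. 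By It\^o, $\phi(s,X_s)=\phi(0,x_0)+\int_0^s\tilde\sigma(r,X_r)\,dW_r$ with $\tilde\sigma:=(I+\nabla u)\sigma\in\mathcal{C}^{1+\alpha}$, i.e.\ the drift is eliminated. Applying It\^o to $\phi(s,X_s^n)$ for the scheme \eqref{eq:Milstein-scheme-SDE} and using the PDE for $u$ yields a decomposition
\[
\phi(s,X_s^n)-\phi(0,x_0^n)=\int_0^s\tilde\sigma(r,X_r^n)\,dW_r+\mathcal{E}^{\mathrm{drift}}_s+\mathcal{E}^{\mathrm{diff}}_s,
\]
where $\mathcal{E}^{\mathrm{diff}}_s$ collects the second-order It\^o--Taylor remainders in the diffusion and $\mathcal{E}^{\mathrm{drift}}_s$ is a sum of terms of the shape $\int_0^s[F(r,X_r^n)-F(r,X_{k_n(r)}^n)]\,dr$ (with $F$ built from $b$ and $\nabla\phi$) together with martingale analogues. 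Setting $Z_s:=\phi(s,X_s)-\phi(s,X_s^n)$, BDG and the Lipschitz property of $\tilde\sigma$ combined with Gr\"onwall reduce \eqref{est:Milstein-bounds-Holder} to $L^p_\omega$-bounds on the supremum of $|\mathcal{E}^{\mathrm{drift}}|+|\mathcal{E}^{\mathrm{diff}}|$; iterating over short intervals covers $[0,1]$, and the bi-Lipschitz property of $\phi$ converts bounds on $Z$ into bounds on $X-X^n$.

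The remainder $\mathcal{E}^{\mathrm{diff}}$ is of order $n^{-1+\varepsilon}$ by classical Milstein analysis applied to the regularised coefficient $\tilde\sigma\in\mathcal{C}^{2+\alpha}$: this is exactly why the Milstein correction $\nabla\sigma\sigma(X_{k_n}^n)(W-W_{k_n})$ was built into the scheme. By contrast, $\mathcal{E}^{\mathrm{drift}}$ involves only the $\alpha$-H\"older regularity of $b$, and a pathwise bound would yield only the suboptimal rate $\alpha/2$. To recover the sharp rate I would apply the stochastic sewing lemma to the germ $A_{u,v}:=\mathbb{E}\bigl[\int_u^v(F(r,X_r^n)-F(r,X_{k_n(r)}^n))\,dr\bigm|\mathcal{F}_u\bigr]$. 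The key input is a regularisation estimate
\[
\bigl\|\mathbb{E}[g(X_v^n)\mid\mathcal{F}_u]\bigr\|_{L^p_\omega}\leq C(v-u)^{-\beta/2}\|g\|_{\mathcal{C}^{-\beta}}
\]
uniformly in $n$, which in turn requires sharp Gaussian-type bounds on the conditional law of $X_v^n$ given $\mathcal{F}_u$, with variance comparable to $v-u$.

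The main obstacle is precisely this density estimate: because the Milstein scheme's effective diffusion coefficient $\sigma(X_{k_n(r)}^n)+\nabla\sigma\sigma(X_{k_n(r)}^n)(W_r-W_{k_n(r)})$ depends on the Brownian increment itself, the scheme is not an It\^o diffusion and classical parabolic density bounds do not apply. I would establish the required estimates via Malliavin calculus: compute the Malliavin derivative of $X_v^n$, verify that its Malliavin covariance matrix is uniformly non-degenerate in $n$ by treating the Milstein correction as a small perturbation of the non-degenerate Gaussian factor arising from $\sigma\sigma^\ast$, and deduce all negative moments of the covariance; the Malliavin integration-by-parts framework then delivers the density bounds. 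Feeding these back into the sewing argument gives $\bigl\|\sup_s|\mathcal{E}^{\mathrm{drift}}_s|\bigr\|_{L^p_\omega}\lesssim n^{-(1+\alpha)/2+\varepsilon}$, and combining with the Gr\"onwall step completes the proof of \eqref{est:Milstein-bounds-Holder}.
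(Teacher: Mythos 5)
Your proposal follows essentially the same strategy as the paper: Zvonkin/It\^o--Tanaka transform to remove the irregular drift, stochastic sewing applied to additive functionals of the scheme, and Malliavin-calculus-based law estimates for the Milstein iterates as the key input. Two technical choices differ, both mild. First, you set up the transform via the backward \emph{parabolic} equation $(\partial_s+\mathcal L)u+b=0$ and localise to short time intervals to make $\|\nabla u\|_\infty$ small; the paper instead solves the \emph{elliptic} resolvent equation $\tfrac12\mathrm{tr}(a\nabla^2)u+b\cdot\nabla u-\theta u=-b$ and takes $\theta$ large, which achieves the smallness of $\|\nabla u\|_\infty$ through the bound $\|u\|_{\mathcal C^\gamma}\lesssim\theta^{(\gamma-2-\alpha)/2}\|b\|_{\mathcal C^\alpha}$ and avoids the restart/iteration step; the two devices are interchangeable here. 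Second, you write that the sewing step ``requires sharp Gaussian-type bounds on the conditional law of $X_v^n$,'' whereas the paper explicitly points out that two-sided heat-kernel bounds are \emph{not} available for the Milstein scheme; what is actually used is a one-sided Malliavin integration-by-parts estimate of the form $|\mathbb E[\partial^\alpha G(\hat X^n_t)Y]|\lesssim\|G\|_\infty\|Y\|_{k,q}\,t^{-k/2}$, and to make the Malliavin matrix uniformly elliptic the paper truncates the Brownian increment in the diffusion coefficient (the cut-off $\chi$ with radius $\kappa=\lambda/(4Kd^2)$) before passing back to $X^n$ via a high-probability event and a Girsanov transform to reinstate the drift. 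Your phrase about treating the Milstein correction as a small perturbation and invoking the integration-by-parts framework captures the right idea, but the truncation-plus-Girsanov chain is the concrete mechanism that makes the ellipticity claim rigorous, and the ``Gaussian bounds'' language overstates what is available.
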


\begin{remark}\label{rem:early}
    As mentioned above, the rate $\frac{1+\alpha}{2}$ above agrees with the additive case \cite{BDG}.
    For additive noise, however, the same rate is obtained with $b$ having only Sobolev regularity: $b\in W^{\alpha,p}$ with $p{\geq}\max(2,d)$ \cite{DGL, NSz}. For the Milstein scheme the treatment of Sobolev drift is currently beyond the scope of the available density estimates in \cref{sec:Mal}, we leave this for future investigations.
\end{remark}

\subsection*{Related works}
The first analysis of a higher order scheme in the case of irregular drift is to our best knowledge \cite{MY-Milstein} (recently extended to SDEs with finite activity jumps \cite{PSS}). Therein the scalar $d=d_1=1$ case is considered, and the irregularity of the drift is in the form of jump discontinuities. This corresponds to $\alpha=1/2$, $p=2$ in the context of  \cref{rem:early} and therefore the strong rate $3/4$ proven in \cite{MY-Milstein} is consistent with $\frac{1+\alpha}{2}$. The regularity assumption on $\sigma$ is weaker in \cite{MY-Milstein} than ours, and the diffusion may also degenerate away from the irregularities. However, the scheme is less direct and involves the knowledge of a rather nontrivial function of the coefficients (denoted by $G^{-1}$ therein), which, if not available, may also need to be approximated, introducing further errors. Nevertheless, an interesting fact is that for this class of coefficients this rate $3/4$ is sharp among all approximation methods based on evaluations of $W$ on a deterministic grid \cite{MY-Lower}, but can be improved by using an adaptive algorithm \cite{Y-1}.

\subsection*{Main idea of the proof}
Let us briefly outline the strategy of the proof. The aim is to estimate the difference $ X_t-X_t^n$ which has the representation
 \begin{align*}
   X_t-X_t^n&=x_0-x_0^n+\int_0^t[b(X_r)-b(X_{k_n(r)}^n)]\dd r
    \\ &\quad+\int_0^t[\sigma(X_r)-\sigma(X_{k_n(r)}^n)-(\nabla\sigma\sigma)(X_{k_n(r)}^n)(W_r-W_{k_n(r)})]\dd W_r
    =:I_0+I_1+I_2.
 \end{align*}
What is perhaps not immediate on the first sight is that the novel difficulties with the Milstein scheme do not lie with $I_2$. Indeed, writing
  \begin{align*}
 I_2=&\int_0^t[\sigma(X_r)-\sigma(X_{r}^n)]\dd W_r
    \\&+\int_0^t[\sigma(X_{r}^n)-\sigma(X_{k_n(r)}^n)-(\nabla\sigma\sigma)(X_{k_n(r)}^n)(W_r-W_{k_n(r)})]\dd W_r=:I_{21}+I_{22},
  \end{align*}
the Milstein scheme is designed precisely so that $I_{22}$ is of order $n^{-1}$. The term $I_{21}$ can be treated easily by an appropriate version of Gronwall's lemma.
As for $I_1$, we write
 \begin{align*}
  I_1=&\int_0^t[b(X_r)-b(X_r^n)]\dd r+\int_0^t[b(X_r^n)-b(X_{k_n(r)}^n)]\dd r=:I_{11}+I_{22}.
 \end{align*}
The treatment of $I_{11}$ relies on a version \emph{Zvonkin's transformation} \cite{Zv, Ver}, more precisely in the form of an It\^o-Tanaka trick.
This transformation gives rise to further quantities similar to $I_{12}$,  which (and $I_{12}$ itself) are handled via \emph{stochastic sewing} \cite{Le}.
An important ingredient for stochastic sewing is the behavior of the law of the process, and the required estimates on the law of the Milstein approximation $X^n$ are significantly more challenging than in the case of the Euler-Maruyama scheme (in fact, some ``usual'' bounds like two-sided heat kernel estimates do not even hold).
These estimates (see Section \cref{sec:Mal}), which can be of independent interest, are derived via a Malliavin calculus toolbox.

\subsection*{Structure of the paper}

     Based on the above, the article is organised as follows: we start with \cref{sec:Mal-basic} on Malliavin calculus which gives us general criteria for deriving the density estimates for a process in an It\^o's integral form, following with \cref{sec:driftless-est} which gives the  estimates on the law of the Milstein scheme and some auxiliary  processes by applying the theorems from \cref{sec:Mal-basic}.  In \cref{sec:inte-est-Mil} we derive bounds on additive functionals of the Milstein scheme via stochastic sewing. In \cref{sec:main-proof} we combine these bounds with Zvonkin's transformation to conclude the  proof for the main theorem . In \cref{App:PDE} we include some estimates on PDEs which is used in the proof  in \cref{sec:main-proof}.

\subsection*{Notation}\label{sec:Notation}

For $k\in\mathbb{N}$, $f:\mathbb{R}^d\mapsto \mathbb{R}$, denote $\partial_k f(x):=\frac{\partial f(x)}{\partial x_k}$ for $x\in\mathbb{R}^d$ and   $\nabla f(x):=(\partial_if(x))_{1\leq i\leq d}$, the derivative is understood in the weak sense.
For vector-valued $f$ we use the same notation, and
$\nabla^k f$ is defined via $\nabla(\nabla ^{k-1}f)$ iteratively. For a multi-index $k=(k_1,\ldots,k_d)\in\mN^d$, denote $\partial^k f(x):=\frac{\partial^{|k|} f(x)}{\partial x_{k_1}\cdots\partial x_{k_d}}$. If $k=(0,\ldots,0)$, we use convention $\partial^kf=f$.
We denote by ${C}^\infty_0$ (${C}_p^\infty$, resp.) the set of all continuously infinitely differentiable functions that, along with all of their partial derivatives, are compactly supported (of polynomial growth, resp.).

For $\alpha\in(0,1]$,  we set $\mathcal{C}^\alpha(\mR^d)$ to be the space of continuous functions such that
\begin{align*}
  \Vert f\Vert_{\mathcal{C}^\alpha}:=[f]_{\mathcal{C}^\alpha}+\sup_{x\in\mR^d}|f(x)|:=\sup_{x,y\in\mR^d,x\neq y}\frac{|f(x)-f(y)|}{|x-y|^\alpha}+\sup_{x\in\mR^d}|f(x)|<\infty.
\end{align*}
Here, and often below, we write $\mathcal{C}^\alpha$ instead of $\mathcal{C}^\alpha(\mR^d)$ for simplicity.
For $\alpha\in(0,\infty)$, we define $\mathcal{C}^\alpha(\mR^d)$ the space of all functions $f$ defined on $\mR^d$ having bounded derivatives $\partial^k f$ for multi-indices $k\in\mN^d$ with $|k|\leq \alpha$ so that
\begin{align*}
  \Vert f\Vert_{\mathcal{C}^\alpha}&:=\|f\|_{\mathcal{C}^{ \lfloor\alpha\rfloor}}+[f]_{\mathcal{C}^\alpha}:=\sum_{|k|\leq \alpha}\sup_{x\in\mR^d}|\partial^kf(x)|+\sum_{\alpha-1\leq|k|<\alpha} [\partial^kf]_{\mathcal{C}^{\alpha-|k|}}<\infty.
\end{align*}
Note that the $\mathcal{C}^\alpha$-norm always includes the supremum of the function.
 We also denote the space of bounded measurable functions $\mathcal{C}^0(\mR^d)$  with the supremum norm.
To be noticed that the functions in $\mathcal{C}^0$ do not need to be continuous.

In the following we denote the conditional expectation w.r.t. the $\sigma$-algebras of the filtration $(\mathcal{F}_t)_{t\geq0}$ as $\mE_t(\cdot):=\mE(\cdot|\mathcal{F}_t), t\geq0$.

On finite dimensional vector spaces we always use the Euclidean norm.

In proofs, the notation $a\lesssim b$ abbreviates the existence of $C>0$ such that $a\leq C b$, such that moreover $C$ depends only on the parameters claimed in the corresponding statement. If the constant depends on any further parameter $c$, we incorporate it in the notation by writing $a\lesssim_c b$.

\section{Estimates on the law of $X^n$ and related processes}\label{sec:Mal}

\subsection{Preliminaries of Malliavin calculus}\label{sec:Mal-basic}
Let $H=L^2([0,1],\R^{d_1})$  with inner product $\langle \cdot, \cdot \rangle_H$ and for $h\in H$ let us use the shorthand $W(h)=\int_0^1 h_t\,dW_t$.
By $\mathcal{S}$ we denote the class of random variables $X$ for which there exists an $n \in \mathbb{N}$, vectors $h_1, \ldots, h_n \in H$ and a function $f \in C_p^\infty(\mathbb{R}^d)$ such that
\begin{align*}
X = f(W(h_1), \ldots, W(h_n)).
\end{align*}
We call the elements of $\mathcal{S}$ \textit{smooth random variables}. More generally,  let $V$ be a Hilbert space and denote by $\mathcal{S}_V$ the space of $V$-valued smooth random variables of the form
\begin{align*}
X = \sum_{j=1}^n X_j v_j,
\end{align*}
where $v_j \in V$, $X_j \in \mathcal{S}$.
Without loss of generality, we may assume that each there exist $h_1, \ldots, h_m \in H$ and function $f_1,\ldots,f_n \in C_p^\infty(\mathbb{R}^d)$ such that $X_j=f_j(W(h_1),\ldots,W(h_m))$.
The \emph{Malliavin derivative} of such a random variable $X\in \cS_V$ is the $H\otimes V$-valued variable $DX$, defined by
\begin{align}\label{chain-rule}
DX = \sum_{j=1}^n\sum_{i=1}^m \partial_i f_j(W(h_1), \ldots, W(h_m)) h_i\otimes v_j.
\end{align}
In the sequel any vector space $U$ is identified with $U\otimes \R$, in particular if $X\in\mathcal{S}$, then $D X\in\mathcal{S}_H$.
The \emph{$k$-th Malliavin derivative} can be defined recursively by the above. We then have that $D^k X$  is a $H^{\otimes k}\otimes V$-valued random variable.
From this point on we will only take $V$ to be a finite dimensional vector space, and so we drop it from the notation whenever it does not cause confusion (and one can simply understand every operation componentwise).

Recall from \cite[Chapter 1]{DN}  that for $p \in [1, \infty)$ and $k \geqslant 1$, the operator $D^k : \mathcal{S} \subset L^p(\Omega) \rightarrow L^p(\Omega; H^{\otimes k} )$ is closable. For $p \geqslant 1$ and $k \geqslant 1$ we define the seminorms
\begin{align}\label{def:norm-kp}
\| X \|_{k,p} = \left( \E \|X\|^p + \sum_{i=1}^k  \E \| D^i X \|_{H^{\otimes i} }^p \right)^{\frac{1}{p}}
\end{align}
We define $\mathbb{D}^{k,p} $ as the completion of the space $\mathcal{S}$ in $L^p(\Omega)$ with respect to $\| \cdot \|_{k,p}$. Furthermore, we denote for $k \geqslant 1$ the spaces
\begin{align*}
\mathbb{D}^{k, \infty} := \bigcap_{p \geqslant 1} \mathbb{D}^{k,p}, \qquad \mathbb{D}^\infty  := \bigcap_{k \geqslant 1} \mathbb{D}^{k, \infty},
\end{align*}
where the latter is a metric space that is complete with metric $d(f,g):=\sum_{k,p\geq 1}2^{-k-p}\frac{\|f-g\|_{\mathbb{D}^{k,p}}}{1+\|f-g\|_{\mathbb{D}^{k,p}} }%\|\cdot\|_{\mathbb{D}^\infty}%:=\sup_{k,p\geq 1}
$ for any $f,g\in\mD^\infty$.

The adjoint of $D$ is denoted by $\delta$: the domain of $\delta$ is those elements $u\in L^2(\Omega, H)$ such that there exists $ Y\in L^2(\Omega)$ such that $\mE\langle u,DX\rangle_H=\mE( YX)$ for every $X\in \mathcal{S}$. We then write $Y=\delta u$.
If $u$ is an adapted (to the filtration generated by $W$) process such that $\int_{0}^{1}\mE|u_t|^2\dd t<\infty$, then $\delta u$ is its It\^o integral $\delta u=\int_{0}^{1}u_t\dd W_t$.
If furthermore $u_t\in\mD^{1,2}$ and $\int_{0}^{1}\mE|D_tu_s|^2\dd s<\infty$
then we have the following identity (see \cite[Proposition 3.8]{Hai})
\begin{align}\label{prop:commds}
  D_t ( \delta (u)) = u_t + \delta ( D_t u)=u_t+\int_{0}^{1} D_tu_s\dd W_s.
\end{align}

For a random vector $X = (X^1, \ldots, X^d)$ whose components are in $\mathbb{D}^{1,2}$, its \emph{Malliavin matrix} is defined as
 \begin{align*}
  \mathcal{M}^{ij}:=\langle D X^i,DX^j\rangle_H,\quad 1\leq i,j\leq d
 \end{align*}
 whenever it makes sense.
 We say that a random vector $X = (X^1, \ldots, X^d)$ whose components are in $\mathbb{D}^{1,2}$ is \emph{nondegenerate} if its Malliavin matrix $\mathcal{M}$ is a.s. invertible and $(\det \mathcal{M})^{-1} \in L^p(\Omega)$ for all $p \geqslant 1$.
 We also use the notation  $\mathcal{M}_t$ to denote the Malliavin matrix of $X_t$ for some $t\geq0$ when $(X_t)_{t\geq0}$ is a process. In this way we say a stochastic process $(X_t)_{t\geq0}$ is  \emph{nondegenerate} if for each $t\geq 0$, $X_t$ is a \emph{nondegenerate} random vector.

\begin{theorem}\cite[Proposition 2.1.4]{DN} \label{mc:thm1}
Let $X = (X^1, \ldots, X^d)$ be a nondegenerate random vector and fix $k \geqslant 1$. Suppose that $X^j \in \mathbb{D}^{k+1, \infty}$ for $j=1, \ldots, d$. For any $Y \in \mathbb{D}^{k, \infty}$ and any multi-index  $\alpha \in \mathbb{N}^d$ such that $|\alpha|\leq k$ define a random variable $Z_\alpha(X,Y)$ inductively by the recursion
\begin{align*}
Z_{e_j}(X,Y) &= \delta \Big( \sum_{l=1}^d (Y \cM^{-1})^{jl} DX^l \Big), \\
Z_{\alpha+e_j}(X,Y) &= Z_{e_j} (X, Z_\alpha(X,Y)).
\end{align*}
Then for every $\varphi \in C_p^\infty(\mathbb{R}^d)$ we have
\begin{align}\label{eq:Mal-deri}
\mE [(\partial_\alpha \varphi)(X)Y] = \mE[\varphi(X)Z_\alpha].%\footnotemark
\end{align}
Furthermore, the random variables $Z_\alpha$ satisfy the bounds, for $1 \leqslant p < q < \infty$ with $\frac{1}{p} = \frac{1}{q} + \frac{1}{r}$,
\begin{align*}
\| Z_\alpha \|_{L^p_\omega} \leqslant N \, \| \cM^{-1} DX \|_{k,2^{k-1}r}^k \| Y \|_{k,q},
\end{align*}
where the constant $N$ depends on $p,q,d,k$.
\end{theorem}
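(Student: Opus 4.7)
The plan is to prove the identity \eqref{eq:Mal-deri} by induction on $|\alpha|$, using the duality between the Malliavin derivative $D$ and the divergence $\delta$, and to derive the $L^p$ bound by iterating Meyer's inequality for $\delta$ together with the Leibniz rule for $D$ and H\"older's inequality.

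For the base case $|\alpha|=1$ with $\alpha = e_j$, I would apply the chain rule $D(\varphi(X)) = \sum_{m=1}^d (\partial_m\varphi)(X)\, DX^m$ together with the duality $\mE[\varphi(X)\delta(u)] = \mE\langle D(\varphi(X)), u\rangle_H$ to the specific choice $u = \sum_{l=1}^d (Y\cM^{-1})^{jl} DX^l$. The right-hand side expands to
\begin{equation*}
\mE\Big[\sum_{m,l=1}^d (\partial_m\varphi)(X)\, Y\, (\cM^{-1})^{jl}\, \cM^{ml}\Big] = \mE[(\partial_j\varphi)(X) Y],
\end{equation*}
where I used $\langle DX^l, DX^m\rangle_H = \cM^{ml}$ and $\sum_l (\cM^{-1})^{jl}\cM^{ml} = \delta_{mj}$, which settles the base case.

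For the inductive step, assuming the identity for multi-indices of length $\le k-1$, I would pick any $j$ with $\alpha_j\ge 1$, write $\alpha = \beta + e_j$ with $|\beta| = k-1$, apply the inductive hypothesis to $\partial_j\varphi$ in place of $\varphi$ and $\beta$ in place of $\alpha$, and then apply the base case with $Y$ replaced by $Z_\beta(X,Y)$. Matching with the recursive definition $Z_{\beta + e_j}(X,Y) = Z_{e_j}(X, Z_\beta(X,Y))$ closes the induction; the resulting well-definedness of $Z_\alpha$, i.e.\ independence of the order in which the $e_j$ are peeled off, follows from the commutativity of partial derivatives applied to smooth $\varphi$.

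For the $L^p$ bound, I would iterate Meyer's inequality $\|\delta(u)\|_{L^p_\omega} \lesssim \|u\|_{\mD^{1,p}}$. Combined with the Leibniz rule for $D$ and H\"older's inequality with $1/p = 1/q + 1/r$, one application gives $\|Z_{e_j}(X,Y)\|_{L^p_\omega} \lesssim \|\cM^{-1}DX\|_{1,r}\, \|Y\|_{1,q}$. Each subsequent layer of the recursion brings in one additional derivative on both factors, and splitting exponents by H\"older at each stage doubles the integrability index on $\cM^{-1}DX$, producing the factor $2^{k-1}$ after $k$ iterations. The main obstacle is the careful bookkeeping of regularity during the iteration: at each step one must ensure $Z_\beta(X,Y) \in \mD^{1,\infty}$ so that $\delta$ can be applied once more, which forces the hypothesis $X^j\in\mD^{k+1,\infty}$ and pins down the precise combination of Sobolev exponents appearing in the stated bound.
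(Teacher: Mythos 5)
The paper states this result as a citation to \cite[Proposition 2.1.4]{DN} and does not reproduce a proof, so there is no in-paper argument to compare against. Your outline is nevertheless a faithful sketch of the standard proof from that reference: the identity follows by induction on $|\alpha|$, using the chain rule, the duality $\mE[\varphi(X)\delta(u)]=\mE\langle D(\varphi(X)),u\rangle_H$, and $\cM^{-1}\cM=I$ in the base case, then the recursion $Z_{\alpha+e_j}=Z_{e_j}(X,Z_\alpha)$ for the inductive step; and the $L^p$ bound comes from iterating Meyer's inequality $\|\delta(u)\|_{j,p}\lesssim\|u\|_{j+1,p}$ together with the Leibniz rule and H\"older. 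The one place where you stop short — the precise emergence of the exponent $2^{k-1}r$ — does require tracking which exponent is split at each of the $k-1$ recursive calls (the stated exponent reflects a particular halving choice, not an intrinsic constraint), and you would also need to establish along the way that $Z_\beta\in\mD^{k-|\beta|,p}$ for appropriate $p$ so that each successive application of $\delta$ is licit; you correctly flag this as the technical crux, so the proposal is correct in approach with an acknowledged gap in the bookkeeping.
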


The following result is an extension of \cite[Proposition 2.1.3]{DN} and \cite[Theorem 3.5, page 300]{KS}.

\begin{theorem} \label{lbl:thm1}
%Let $(W_t)_{t \in [0,1]}$ be a $d_1$-dimensional Brownian motion and
Let $t\in(0,1].$
Let $(u_s)_{s \in [0,1]}$ be a $\R^{d\times d_1}$-valued adapted process. Suppose that
\begin{itemize}
\item[(i)] $\mE \left( \int_0^1 \Vert u_s \Vert^2  \, \dd s \right) < \infty$,  $u_s \in \mathbb{D}^{1,2}$ for all $s \in [0,t]$, and for some $p \in [2, \infty)$ it holds that
\begin{align}
\mu := \sup_{s,r \in [0,1]} \mE \left( \Vert D_s u_r \Vert^p \right) < \infty, \label{lbl:thm1cond1}
\end{align}
\item[(ii)] there exists a constant $\lambda_* > 0$ such that $u_s u_s^* \geq \lambda_* I$ for all $s\in[0,t]$.
\end{itemize}
Set $X_t = \int_0^t u_s \, \dd W_s$ and denote by $\mathcal{M}_t$ the Malliavin matrix of $X_t$. Then for $\gamma \in (0, \frac{p}{2d})$ we have

\begin{align}\label{est:M-Malliavin-matrix}
\mE \left( (\det \mathcal{M}_t)^{-\gamma} \right) \leqslant N \, t^{- \gamma d} %+ t^{\frac{p}{2} - \gamma d} \right),
\end{align}
where the constant $N$ depends only on $\lambda_*, d, d_1, \mu,\gamma$ and $p$.
\end{theorem}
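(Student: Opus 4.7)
The plan is to follow the Kusuoka--Stroock route for negative moments of a Malliavin matrix, while carefully tracking the dependence on $t$. Since $\mathcal{M}_t$ is symmetric positive semi-definite, the bound $\det\mathcal{M}_t\geq\lambda_{\min}(\mathcal{M}_t)^d$ reduces matters to proving $\mE[\lambda_{\min}(\mathcal{M}_t)^{-\gamma d}]\lesssim t^{-\gamma d}$. Using the layer-cake identity
\begin{equation*}
\mE[\lambda_{\min}(\mathcal{M}_t)^{-\gamma d}]=\gamma d\int_0^\infty \delta^{-\gamma d-1}\mP(\lambda_{\min}(\mathcal{M}_t)<\delta)\,\dd\delta
\end{equation*}
and splitting the integral at $\delta_0\asymp\lambda_* t$, the $\delta\geq\delta_0$ contribution is trivially of order $(\lambda_*t)^{-\gamma d}$; the whole estimate thus reduces to a quantitative small-ball bound of the form $\mP(\lambda_{\min}(\mathcal{M}_t)<\delta)\lesssim(\delta/(\lambda_*t))^{p/2-\epsilon}$ for small $\delta$, where any loss $\epsilon>0$ will be absorbed by the slack $\gamma<p/(2d)$.

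The heart of the argument is a fixed-direction bound. Adaptedness of $(u_s)$ together with the formula for the Malliavin derivative of an It\^o integral give, for $r\leq t$,
\begin{equation*}
D_rX_t=u_r+\int_r^t D_r u_s\,\dd W_s,
\end{equation*}
and hence $\langle \mathcal{M}_t\xi,\xi\rangle=\int_0^t |V_r^*\xi|^2\,\dd r$ with $V_r:=D_rX_t$. For $\xi\in S^{d-1}$ and a window size $\eta\in(0,t]$, the elementary inequality $|a+b|^2\geq\tfrac12|a|^2-|b|^2$ combined with hypothesis (ii) yields
\begin{equation*}
\int_{t-\eta}^t|V_r^*\xi|^2\,\dd r\;\geq\;\tfrac{\lambda_*\eta}{2}-J_\eta^\xi,\qquad J_\eta^\xi:=\int_{t-\eta}^t\bigl|V_r^*\xi-u_r^*\xi\bigr|^2\,\dd r.
\end{equation*}
Burkholder--Davis--Gundy applied to the It\^o integral representation of $V_r^*\xi-u_r^*\xi$, together with a Jensen step for the outer $r$-integration and the hypothesis \eqref{lbl:thm1cond1}, yields $\mE|J_\eta^\xi|^{p/2}\lesssim \mu\,\eta^p$; Chebyshev then delivers, uniformly in $\xi$ and for $\eta\leq t$,
\begin{equation*}
\mP\bigl(\langle \mathcal{M}_t\xi,\xi\rangle<\tfrac{\lambda_*\eta}{4}\bigr)\leq \mP\bigl(J_\eta^\xi>\tfrac{\lambda_*\eta}{4}\bigr)\lesssim \eta^{p/2}.
\end{equation*}

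To pass from this pointwise-in-$\xi$ estimate to a uniform bound on $\lambda_{\min}(\mathcal{M}_t)$, I will combine a $\rho$-net of $S^{d-1}$ (of cardinality $\lesssim\rho^{-(d-1)}$) with the Lipschitz control $|\langle\mathcal{M}_t\xi_1,\xi_1\rangle-\langle\mathcal{M}_t\xi_2,\xi_2\rangle|\leq 2\|\mathcal{M}_t\|_{\mathrm{op}}|\xi_1-\xi_2|$ and a Markov bound on $\|\mathcal{M}_t\|_{\mathrm{op}}$; the requisite moments of $\|\mathcal{M}_t\|_{\mathrm{op}}$ follow from the same decomposition of $D_rX_t$ via BDG for the martingale part, while the $u_r$-contribution is handled by the Clark--Ocone identity $u_r=\mE u_r+\int_0^r\mE_s[D_su_r]\,\dd W_s$, reducing everything to $\mu$ and to the $L^1$ integrability in (i). Inserting the resulting tail bound into the layer-cake integral finishes the proof. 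The main conceptual obstacle is the necessarily \emph{localised} nature of the BDG step: a naive global bound would only produce $\mE|J_t^\xi|^{p/2}\lesssim \mu\,t^p$, which does not shrink as $\eta\to0$. The correct estimate performs BDG on the shrinking window $[t-\eta,t]$, collecting both an It\^o-isometry factor $(t-r)^{p/2}$ and a Jensen factor $\eta^{p/2-1}$ from the outer $r$-integration, and it is this localised gain that produces the sharp $t^{-\gamma d}$ dependence matching the deterministic case $u_s\equiv\mathrm{const}$, where $\det\mathcal{M}_t=t^d\det(uu^*)$. The sphere-covering step and the final exponent-tracking are secondary technical nuisances, but they only cost an arbitrarily small $\epsilon$ in the final exponent, absorbed by the strict inequality $\gamma<p/(2d)$.
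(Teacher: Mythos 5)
Your core estimate is the right one and matches the paper's: lower-bound $\langle\mathcal{M}_t\xi,\xi\rangle$ by restricting the $r$-integral to a shrinking window $[t-\eta,t]$, split $D_rX_t=u_r+\int_r^tD_ru_s\,\dd W_s$, use $|a+b|^2\geq\tfrac12|a|^2-|b|^2$ and ellipticity for the positive part, and control the error $J_\eta$ by a localised Jensen--BDG argument giving $\mE|J_\eta|^{p/2}\lesssim\mu\,\eta^p$. The layer-cake step and the absorption of the exponent loss by the strict inequality $\gamma<p/(2d)$ also agree with the paper.

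However, there is a genuine gap in the way you pass from a fixed direction $\xi$ to the smallest eigenvalue. You propose an $\rho$-net of $S^{d-1}$ combined with a Markov bound on $\|\mathcal{M}_t\|_{\mathrm{op}}$, invoking Clark--Ocone to get moments of $u_r$. But the hypotheses do not supply the required moments: (i) only gives $\mE\int_0^1\|u_s\|^2\,\dd s<\infty$ (with no uniform-in-$s$ bound and no moments above $2$), and (ii) is a one-sided ellipticity bound with no upper bound on $\|u_s\|$. The Clark--Ocone decomposition $u_r=\mE u_r+\int_0^r\mE_s[D_su_r]\,\dd W_s$ controls the martingale piece via $\mu$, but the deterministic piece $|\mE u_r|$ is not controlled by $\mu$ at all. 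If you forced this through, the resulting constant would depend on data about $u$ beyond $\lambda_*,\mu,p,d,d_1,\gamma$, contradicting the statement of the theorem; and for exponents above $p$ the martingale piece is not controlled either. So the net step cannot be completed under the stated assumptions.

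The fix is small and is precisely what the paper does: the $\xi$-dependence in your error term is spurious. For a unit vector $\xi$ one has $|(V_r-u_r)^*\xi|^2\leq\|V_r-u_r\|^2$, where $V_r=D_rX_t$, so you may replace $J_\eta^\xi$ by the $\xi$-\emph{independent} quantity $\tilde J_\eta:=\int_{t-\eta}^t\|\int_r^tD_ru_s\,\dd W_s\|^2\,\dd r$. Then the bound
\begin{align*}
\langle\mathcal{M}_t\xi,\xi\rangle\;\geq\;\int_{t-\eta}^t|V_r^*\xi|^2\,\dd r\;\geq\;\tfrac{\lambda_*\eta}{2}-\tilde J_\eta
\end{align*}
holds uniformly over $\xi\in S^{d-1}$, and taking the infimum over $\xi$ gives directly $\lambda_{\min}(\mathcal{M}_t)\geq\tfrac{\lambda_*\eta}{2}-\tilde J_\eta$. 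Your localised BDG step applies verbatim to $\tilde J_\eta$ (only $\mE\|D_su_r\|^p$, i.e.\ $\mu$, is needed), and the net, the Lipschitz estimate, and the $\|\mathcal{M}_t\|_{\mathrm{op}}$ moment bounds can all be deleted. With this modification your argument becomes essentially identical to the paper's.
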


\begin{proof}

Note that for any $\xi\in\R^d$,
\begin{equ}
\xi^* \mathcal{M}_t \xi=\int_0^t\| D_s X_t\xi\|^2\dd s.
\end{equ}
We denote by $\lambda_t$ the smallest eigenvalue of $\mathcal{M}_t$. It is given by
\begin{align}
\lambda_t := \inf_{\| \xi \| = 1} \xi^* \mathcal{M}_t \xi = \inf_{\| \xi \| = 1} \int_0^t\| D_s X_t\xi\|^2\dd s.\label{lbl:deflambdt}
\end{align}
We notice that because of $\det \mathcal{M}_t \geqslant \lambda_t^d$ we have
\begin{align}
\mE ((\det \mathcal{M}_t)^{-\gamma}) \leqslant \mE (\lambda_t^{-d \gamma}). \label{lbl:bound5}
\end{align}
We therefore estimate this last quantity.
Note that \eqref{prop:commds} yields for $s < t$
\begin{align}\label{formu:inte}
D^k_s X^m_t = u^{k,m}_s + \sum_{l=1}^{d_1} \int_s^t D^k_s (u_r^{m,l}) \, \dd W_r^l.
\end{align}
Using the elementary inequality $(a+b)^2 \geqslant \frac{1}{2}a^2 - b^2$ we obtain, for any $\xi\in\R^d$ with $\|\xi\|=1$,
\begin{align}
\| D_s X_t\xi\|^2 &
\geqslant
\frac{1}{2} \| u_s\xi\|^2
-\Big\|\Big(\int_s^t D_su_r\dd W_r\Big)\xi\Big\|^2
\geqslant \frac{1}{2} \lambda_*
-\Big\|\int_s^t D_su_r\dd W_r\Big\|^2.\label{lbl:bound1}
\end{align}
The lower bound for the first term is provided by the ellipticity assumption in $(ii)$.
After plugging (\ref{lbl:bound1}) into (\ref{lbl:deflambdt}) we arrive at the following estimates for any $h \in [0,1]$
\begin{align}
\lambda_t &\geqslant \int_{t(1-h)}^t  \frac{1}{2}\lambda_* -\Big\|\int_s^t D_su_r\dd W_r\Big\|^2  \dd s
=: \frac{1}{2}th\lambda_* - I_h(t).\label{soonfinal}
\end{align}

Going back to the quantity we want to estimate, we write, for any $a>0$,
\begin{align}
\mE (\lambda_t^{-\gamma d}) &
%= \mE \int \mathds{1}_{(0,\lambda_t^{-\gamma d} ]} (y)  \, \dd y
= \int_0^\infty \mP (\lambda_t^{-\gamma d} > y) \dd y \lesssim \int_0^\infty y^{\gamma d -1} \mP(\lambda_t^{-1} > y) \, \dd y \nonumber \\
 &\lesssim a^{\gamma d} + \int_a^\infty  y^{\gamma d -1} \mP \big(\lambda_t < 1/y \big) \dd y. \label{lbl:bound2}
\end{align}
We now choose the parameters as $a:= \frac{4}{t \lambda_*}$ and for any $y\geq a$, $h= \frac{4}{t \lambda_* y}$. Note that a $y \geqslant a$ indeed implies $h \leqslant 1$. From \eqref{soonfinal} we see

\begin{align}
\mP \big(\lambda_t < 1/y \big) \leqslant \mP \big(I_h(t) \geq 1/(2y) \big) \lesssim y^{\frac{p}{2}}  \mE ( | I_h (t) |^{\frac{p}{2}} ). \label{lbl:bound3}
\end{align}
Furthermore, applying the Jensen and  Burkholder-Davis-Gundy (BDG) inequalities yields
\begin{align}
\mE ( | I_h (t) |^{\frac{p}{2}} ) &\lesssim (th)^{\frac{p}{2} -1}  \int_{t(1-h)}^t \mE \Big| \int_s^t D_s u_r \, \dd W_r \Big|^p \, \dd s \nonumber \\
 &\lesssim (th)^{\frac{p}{2} -1} \int_{t(1-h)}^t \mE \Big( \int_s^t \|D_s u_r \|^2 \, \dd r \Big)^{\frac{p}{2}} \, \dd s \nonumber \\
 &\lesssim (th)^{\frac{p}{2} -1} \int_{t(1-h)}^t (t-s)^{\frac{p}{2} -1} \int_s^t  \sup_{r \in [0,1]} \mE  \|D_s (u_r) \|^p {\dd r} \, \dd s \nonumber \\
 &\lesssim  (th)^p \sup_{r,s \in [0,1]} \mE \|D_s (u_r) \|^p  \; .
 \label{lbl:bound4}
\end{align}
Since $th\approx y^{-1}$, we incorporate (\ref{lbl:bound4}) and (\ref{lbl:bound3}) into (\ref{lbl:bound2}) and obtain for $\gamma \in (0, \frac{p}{2d})$
\begin{align}
\mE (\lambda_t^{-\gamma d}) &\lesssim t^{-\gamma d} +  \int_{\frac{4}{t \lambda_*}}^\infty \mE ( | I^k_h (t) |^{\frac{p}{2}} ) \, y^{\gamma d -1 + \frac{p}{2}} \dd y \nonumber\\
 &\lesssim   t^{- \gamma d} + \int_{\frac{4}{t \lambda_*}}^\infty y^{\gamma d -1 - \frac{p}{2}} \dd y %\leqslant c'' \left( t^{- \gamma d} + t^{\frac{p}{2} - \gamma d} \right) \leqslant c''' t^{- \gamma d}.
\lesssim t^{-\gamma d}.\label{lbl:bound6}
\end{align}
Returning with the above to (\ref{lbl:bound5}) finishes the proof.
\end{proof}
\begin{remark}\label{rem:est-theta}
    We will later need a slight extension of \cref{lbl:thm1}, to accommodate processes of the form $X_t^\theta:=\int_0^{t_1}u_s\dd W_s+\theta\int_{t_1}^tu_s\dd W_s$, $0< t_1\leq t$,  $\theta\in[0,1]$. Bounding the Malliavin matrix of $X$ follows the same lines as above, for the sake of completeness we provide the argument.
    \begin{align*}
     \|  &  D_sX_t\xi\|^2
     \\=&  \| (   D_sX_t\mathds{1}_{s\in[0,t_1]}+D_sX_t\mathds{1}_{s\in(t_1,t]})\xi\|^2
\\   \geq&
\Big(\frac{1}{2} \| u_s\xi\|^2
-\Big\|\Big(\int_s^{t_1} D_su_r\dd W_r\Big)\xi\Big\|^2 \Big)\mathds{1}_{s\in(0,t_1)}+
\Big(\frac{\theta^2}{2} \| u_s\xi\|^2
-\Big\|\Big(\theta\int_s^t D_su_r\dd W_r\Big)\xi\Big\|^2\Big)\mathds{1}_{s\in[t_1,t]}
\\ \geq& \Big(\frac{1}{2} \lambda_*
-\Big\|\int_s^{t_1} D_su_r\dd W_r\Big\|^2\Big)\mathds{1}_{s\in(0,t_1)}+ \theta^2\Big(\frac{1}{2} \lambda_*
-\Big\|\int_s^t D_su_r\dd W_r\Big\|^2\Big)\mathds{1}_{s\in[t_1,t]}
    \end{align*}
which yields for any $h\in[0,1]$
\begin{align*}
 \lambda_t\geq& \int^{t_1}_{t_1(1-h)}  \Big(\frac{1}{2} \lambda_*
-\Big\|\int_s^{t_1} D_su_r\dd W_r\Big\|^2\Big)\mathds{1}_{s\in(0,t_1)}\dd r
\\&+  \int_{t_1}^{t_1+(t-t_1)h}\theta^2\Big(\frac{1}{2} \lambda_*
-\Big\|\int_s^t D_su_r\dd W_r\Big\|^2\Big)\mathds{1}_{s\in(t_1,t]}\dd r
\\\geq& \frac{1}{2} \lambda_* (t_1h+\theta^2(t-t_1)h)-\int^{t_1}_{t_1(1-h)}
\Big\|\int_s^{t_1} D_su_r\dd W_r\Big\|^2\mathds{1}_{s\in(0,t_1)}\dd r
\\&-  \int_{t_1}^{t_1+(t-t_1)h}\theta^2\Big\|\int_s^t D_su_r\dd W_r\Big\|^2\mathds{1}_{s\in(t_1,t]}\dd r \\=&:\frac{1}{2} \lambda_* (t_1h+\theta^2(t-t_1)h)-I_{ 1,{h}}(t_1)-I_{ 2,{h}}^\theta(t)
=:\frac{1}{2} \lambda_* (t_1h+\theta^2(t-t_1)h)-I_h^\theta(t).
\end{align*}
Next, we use \eqref{lbl:bound2}, and this time we take $a:=\frac{4}{t_1\lambda_*+\theta^2(t-t_1)\lambda_*}$ and for any $y\geq a$, $h=\frac{4}{y(t_1\lambda_*+\theta^2(t-t_1)\lambda_*)}\leq 1$.
Similarly to \eqref{lbl:bound3}, we have
\begin{align*}
    \mP \big(\lambda_t < 1/y \big) \leqslant \mP \big(I_h^\theta(t) \geq 1/(2y) \big) \lesssim y^{\frac{p}{2}}  \mE ( | I_h^\theta (t) |^{\frac{p}{2}} ).
\end{align*}
Similarly to \eqref{lbl:bound4} we have
\begin{align*}
  &\mE ( | I_{ 1,{h}} (t_1) |^{\frac{p}{2}} )\lesssim (t_1h)^p \sup_{r,s \in [0,1]} \mE \|D_s (u_r) \|^p\lesssim y^{-p} ,
  \\&\mE ( | I_{ 2,{h}} (t) |^{\frac{p}{2}} )\lesssim \theta^{p}((t-t_1)h)^p \sup_{r,s \in [0,1]} \mE \|D_s (u_r) \|^p\lesssim y^{-p}.
\end{align*}
Finally, similarly to \eqref{lbl:bound6}, we get for any $\gamma\in(0,\frac{p}{2d})$
\begin{align*}
    \mE (\lambda_t^{-\gamma d}) &\lesssim (t_1+\theta^2(t-t_1))^{-\gamma d}.
\end{align*}
It shows that instead of \eqref{est:M-Malliavin-matrix}, the Malliavin matrix $\mathcal{M}_t^\theta$ of $X_t^\theta$ satisfies the following bound:
\begin{align}\label{est:theta-M-Mal}
    \mE \left( (\det \mathcal{M}_t^\theta)^{-\gamma} \right) \leqslant N \, (t_1+\theta^2(t-t_1))^{- \gamma d}.
\end{align}
\end{remark}
\medskip

Define for $j\in\mN$, $p\geq1$, and random variables $X$, the quantity
\begin{align}
\cD(j,p)(X) := \esssup_{s_1, \ldots, s_j \in [0,1]} \mE
%\sup_{s_1\vee \ldots\vee s_j \leq t}
\| {D_{s_1, \ldots, s_j}^j} X\|^p, \label{pr:bound1}
\end{align}
whenever it is finite. Define the following class of processes
\begin{align*}
    \frD^k=\{(X_t)_{t\in[0,1]}: &\,\cD(j,p)(X_t)<\infty\,\, \forall 1 \leqslant j \leqslant k,p \geqslant 2, t\in[0,1]
    \\&\text{and }D_s X_t = 0\,\,\forall s > t\}.
\end{align*}

\begin{proposition} \label{pr:prop1}
Let $k\in\N$ and $(X_t)_{t\in[0,1]}\in\frD^{k+1}$. Then for all $p \geqslant 2$, $t\in(0,1]$, one has
\begin{align}
\mE \| D^{k+1} X_t \|_{H^{\otimes (k+1)}}^p &\leqslant \cD(k+1,p)(X_t) \, t^{\frac{p}{2} (k+1)},\label{est:bounde1}\\
\mE \|D^k \mathcal{M}_t \|^p_{H^{\otimes k}  } &\leqslant N \, t^{\frac{p}{2}(k+2)}, \label{pr:bound2}
\end{align}
where $N$ depends only on $k,d$, and finitely many $\cD(j,r)(X_t)$ with $j=1, \ldots, k+1$ and $r \geqslant 2$.
\end{proposition}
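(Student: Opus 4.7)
The first bound \eqref{est:bounde1} is essentially a consequence of the definition of $\mathcal{D}(k+1,p)(X_t)$ combined with Jensen's inequality and the adaptedness property built into $\mathfrak{D}^{k+1}$. Since $(X_t)\in\mathfrak{D}^{k+1}$ implies $D_s X_t=0$ for $s>t$ (and iterating, $D^{k+1}_{s_1,\ldots,s_{k+1}}X_t=0$ unless all $s_i\leq t$), the $H^{\otimes(k+1)}$-norm squared is an integral over the cube $[0,t]^{k+1}$. Applying Jensen in the form $(\int_{[0,t]^{k+1}} f^2)^{p/2}\leq t^{(k+1)(p/2-1)}\int_{[0,t]^{k+1}} f^p$ for $p\geq 2$, taking expectation, and using \eqref{pr:bound1} to bound the integrand pointwise, I obtain $t^{(k+1)(p/2-1)}\cdot t^{k+1}\cdot\mathcal{D}(k+1,p)(X_t)=\mathcal{D}(k+1,p)(X_t)\,t^{p(k+1)/2}$.

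For \eqref{pr:bound2}, I would start from the componentwise identity $\mathcal{M}_t^{ij}=\int_0^t D_s X_t^i\cdot D_s X_t^j\,\dd s$ (again using the vanishing of $DX_t$ past $t$). Applying the Malliavin derivative $k$ times and using the Leibniz rule (the Malliavin derivative satisfies the product rule by \eqref{chain-rule}), the result is a finite sum, indexed by partitions $\{s_1,\ldots,s_k\}=A\sqcup B$, of terms of the form
\begin{equation*}
T_{A,B}(s_1,\ldots,s_k):=\int_0^t D^{|A|+1}_{s_A,s}X_t^i\;D^{|B|+1}_{s_B,s}X_t^j\,\dd s,
\end{equation*}
where $s_A$ denotes the coordinates indexed by $A$. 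The combinatorial factor depends only on $k$.

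The remaining task is to estimate $\mathbb{E}\|D^k\mathcal{M}_t^{ij}\|^p_{H^{\otimes k}}$ term by term. For each $T_{A,B}$, I apply Cauchy--Schwarz in the inner $\dd s$ integral to get the product of two factors of the form $(\int_0^t|D^{|A|+1}_{s_A,s}X_t^i|^2\,\dd s)^{1/2}$. Raising to the $p$-th power, applying Cauchy--Schwarz in expectation, and then Jensen's inequality $(\int_0^t f^2)^{p}\leq t^{p-1}\int_0^t f^{2p}$ inside each factor, combined with \eqref{pr:bound1} at exponent $2p$, yields $\mathbb{E}|T_{A,B}(s_1,\ldots,s_k)|^p\lesssim t^p$ pointwise in $s_1,\ldots,s_k$, with constant depending on finitely many $\mathcal{D}(j,2p)(X_t)$. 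Converting from this pointwise bound back to the $H^{\otimes k}$-norm by the same Jensen step as in part one contributes an additional $t^{k(p/2-1)}\cdot t^k=t^{kp/2}$, giving a total of $t^{p(k+2)/2}$.

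The only place mild care is needed is in the bookkeeping of the Leibniz expansion and ensuring that the split $A\sqcup B$ preserves the admissibility $(X_t)\in\mathfrak{D}^{k+1}$ so that each factor $D^{|A|+1}X_t$, $D^{|B|+1}X_t$ can be controlled via \eqref{pr:bound1} (both have order at most $k+1$); this is automatic since $|A|+|B|=k$. The main ``idea'' is really just matching powers of $t$: one factor of $t^{1/2}$ each from the $k$ Malliavin derivatives plus one factor of $t$ from the inner product integral $\int_0^t\cdots\dd s$, accounting for the target exponent $(k+2)/2$.
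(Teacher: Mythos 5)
Your proposal is correct and follows essentially the same route as the paper: for \eqref{est:bounde1} both use the vanishing of $D_sX_t$ for $s>t$ and Jensen, and for \eqref{pr:bound2} both write $\mathcal{M}_t^{ij}=\int_0^t\langle D_sX_t^i,D_sX_t^j\rangle\,\dd s$, apply the Malliavin Leibniz rule, and balance the powers of $t$ (one $t^{1/2}$ per derivative, one $t$ from the $\dd s$ integral) via Jensen/Cauchy--Schwarz and H\"older; the only difference is cosmetic, namely that you expand by Leibniz before taking the supremum while the paper extracts the pointwise supremum $\sup_{s,s_1,\dots,s_k}\mE|D^k_{s_1,\dots,s_k}\langle D_sX_t^m,D_sX_t^q\rangle|^p$ first and applies Leibniz to that.
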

\begin{remark}
    For the above statement (and several below) $X_t$ does not actually need to be a process, the statement holds with any random variable $X$ satisfying the condition $D_s X=0$, $s>t$, for some fixed $t$.
    We choose to formulate the statements like this to stay closer to the standard literature, and since in their applications we will use them with processes.
\end{remark}
\begin{proof}
We first note that because of our assumption, we have $D{^{k+1}_{s_1, \ldots , s_{k+1}}} X_t = 0$ if $s_1 \vee \cdots \vee s_{k+1} > t$. Applying Jensen's inequality we then obtain
\begin{align*}
\mE \| D^{k+1} X_t \|_{H^{\otimes k+1} }^p &= \mE \Big( \int_0^t \cdots \int_0^t {\|} D{^{k+1}_{s_1, \ldots , s_{k+1}}}X_t {\|}^2 \dd s_1 \ldots \dd s_{k+1} \Big)^{\frac{p}{2}} \\
 &\leqslant t^{(\frac{p}{2}-1)(k+1)} \,  \int_0^t \cdots \int_0^t \mE\|  D{^{k+1}_{s_1, \ldots , s_{k+1}}} X_t \|^p \dd s_1 \ldots \dd s_{k+1},
\end{align*}
from which the first bound follows immediately.

For the second bound note that for any $m,q \in \lbrace 1, \ldots, d \rbrace$, by applying Jensen's inequality twice we get
\begin{align*}
\mE& \|D^k \mathcal{M}^{mq}_t \|^p_{H^{\otimes k} } \\&= \mE \Big( \int_0^t \cdots \int_0^t \left| D{^{k}_{s_1, \ldots, s_k}}\int_0^t \langle D_s X^m_t, D_s X^q_t \rangle_{\mR^{d_1}} \, \dd s \right|^2 \dd s_1 \ldots \dd s_k \Big)^{\frac{p}{2}} \\
 &\leqslant  \mE \Big( \int_0^t \cdots \int_0^t t \int_0^t \Big| D{^{k}_{s_1, \ldots, s_k}} \langle D_s X^m_t {, }  \, D_s X^q_t\rangle_{{\mR^{d_1}} }  \Big|^2 \dd s \,  \dd s_1 \ldots \dd s_k \Big)^{\frac{p}{2}} \\
 &\lesssim t^{\frac{p}{2}(k+2)} \sup_{s, s_1, \ldots, s_k\in(0,t]} \mE
   \big| D{^{k}_{s_1, \ldots, s_k}} \langle D_s X^m_t    {, } \,  D_s X^q_t\rangle_{{\mR^{d_1}} }  \big|^p.
\end{align*}
We can then use the following Leibniz rule:
\begin{align*}
D^k_{s_1, \ldots, s_k} (FG) = \sum_{I \subset \lbrace s_1, \ldots, s_k \rbrace} D_I^{|I|} (F) \; D_{I^c}^{k-|I|} (G), \quad F,G \in \mathbb{D}^{k,p}
\end{align*}
for any $k \geqslant 1$, $p \geqslant 2$. Applying this with $F=D^l_s X^m_t$ and $G=D^l_s X^q_t$ and subsequently using H\"older's inequality leads to \eqref{pr:bound2}.
\end{proof}

\begin{proposition}
Let $k\in\N$ and let $(X_t)_{t\in[0,1]}\in\frD^{k+1}$ be nondegenerate.
Then for all $p\geqslant 2$, $t\in(0,1]$, one has
\begin{align}
\mE \| D^k \mathcal{M}^{-1}_t \|_{H^{\otimes k}}^p &\leqslant N \, t^{\frac{p}{2}k} \sum_{j=1}^k  [\mE | \det \mathcal{M}_t |^{-2(1+2j)p}]^\frac{j+1}{2(1+2j)} t^{p(d(j+1)-1)} \label{pr:bound3}
\end{align}
where $N$ depends only on $k$, $d$, and finitely many $\cD(j,r)(X_t)$ with $j=1, \ldots, k+1$ and $r \geqslant 2$.
\end{proposition}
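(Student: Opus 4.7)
The plan is to differentiate the identity $\mathcal{M}_t\mathcal{M}_t^{-1}=I$ and bootstrap the first-order relation $D(\mathcal{M}_t^{-1})=-\mathcal{M}_t^{-1}(D\mathcal{M}_t)\mathcal{M}_t^{-1}$ into an explicit expansion of $D^k\mathcal{M}_t^{-1}$, and then bound each resulting term with H\"older's inequality together with the moment estimates \eqref{est:bounde1} and \eqref{pr:bound2} just established.

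First I would prove by induction on $k$ a Fa\`a di Bruno-type formula
\[
D^k_{s_1,\ldots,s_k}\mathcal{M}_t^{-1} \,=\, \sum_{j=1}^k (-1)^j \sum_{(I_1,\ldots,I_j)} \mathcal{M}_t^{-1}\bigl(D^{|I_1|}_{s_{I_1}}\mathcal{M}_t\bigr)\mathcal{M}_t^{-1}\cdots \mathcal{M}_t^{-1}\bigl(D^{|I_j|}_{s_{I_j}}\mathcal{M}_t\bigr)\mathcal{M}_t^{-1},
\]
where the inner sum is over ordered partitions of $\{s_1,\ldots,s_k\}$ into $j$ nonempty blocks; the induction step is a direct product-rule calculation. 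Each summand factors as a tensor product along the blocks, so the $H^{\otimes k}$-norm factorises and gives the pointwise (in $\omega$) bound
\[
\|D^k\mathcal{M}_t^{-1}\|_{H^{\otimes k}} \lesssim \sum_{j=1}^k \sum_{l_1+\cdots+l_j=k,\,l_i\geq 1} \|\mathcal{M}_t^{-1}\|^{j+1}\prod_{i=1}^j \|D^{l_i}\mathcal{M}_t\|_{H^{\otimes l_i}}.
\]
Cramer's rule combined with the elementary bound $\|\mathrm{adj}(\mathcal{M}_t)\|\lesssim \|\mathcal{M}_t\|^{d-1}$ then converts each $\|\mathcal{M}_t^{-1}\|^{j+1}$ into $|\det\mathcal{M}_t|^{-(j+1)}\|\mathcal{M}_t\|^{(d-1)(j+1)}$; the $\|\mathcal{M}_t\|^{d-1}$ factors are precisely what generates the $d(j+1)-1$ exponent in the target.

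For each fixed composition $(l_1,\ldots,l_j)$ I would then apply a three-fold H\"older inequality with exponents $1/a+1/b+1/c=1$ and $a=2(1+2j)/(j+1)$, the choice of $a$ being forced by the desired $\det$-moment $-2(1+2j)p$. The first factor contributes exactly $[\mE|\det\mathcal{M}_t|^{-2(1+2j)p}]^{(j+1)/(2(1+2j))}$. For the second factor, the pointwise bound $\|\mathcal{M}_t\|\lesssim \|DX_t\|_H^2$ combined with \eqref{est:bounde1} at $k=0$ gives $\mE\|\mathcal{M}_t\|^q\lesssim t^q$ for every $q\geq 1$, contributing $t^{p(d-1)(j+1)}$. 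For the third factor, a further H\"older step inside the product together with \eqref{pr:bound2} yields $t^{(p/2)\sum_i(l_i+2)}=t^{(p/2)(k+2j)}$. Summing the $t$-exponents and using the identity $(d-1)(j+1)+j=d(j+1)-1$ recovers exactly $t^{(p/2)k+p(d(j+1)-1)}$.

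The main obstacle is really only bookkeeping: verifying that the remaining H\"older exponents $b,c$ can always be chosen in $(1,\infty)$, which is automatic since $1/a=(j+1)/(2(1+2j))$ stays uniformly below $1$ for $j\in\{1,\ldots,k\}$, and ensuring that $\mathcal{M}_t^{-1}\in\mD^{k,p}$ so that the formal differentiations are justified, which is a standard consequence of nondegeneracy, see e.g.\ \cite[Lemma~2.1.6]{DN}. The number of ordered compositions of $k$ is bounded by $2^{k-1}$, so the outer summation is absorbed into the constant $N$.
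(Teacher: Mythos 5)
Your proposal is correct and follows essentially the same route as the paper: the same Fa\`a-di-Bruno/Leibniz expansion of $D^k\mathcal{M}_t^{-1}$ into sums of products of $\mathcal{M}_t^{-1}$ and $D^{c_i}\mathcal{M}_t$, the same Cramer-rule bound $\|\mathcal{M}_t^{-1}\|\lesssim|\det\mathcal{M}_t|^{-1}\|\mathcal{M}_t\|^{d-1}$, and H\"older to separate the three kinds of factors. The only cosmetic difference is the order of operations: the paper first H\"olders the $j{+}1$ copies of $\mathcal{M}_t^{-1}$ against the $j$ factors $D^{c_i}\mathcal{M}_t$ with exponents $\frac{1{+}2j}{j{+}1}$ and $1{+}2j$, and then Cauchy--Schwarz splits $\mE\|\mathcal{M}_t^{-1}\|^{p(1{+}2j)}$ into determinant and $\|\mathcal{M}_t\|$ moments, whereas you decompose $\|\mathcal{M}_t^{-1}\|^{j+1}$ pointwise and then run a single three-fold H\"older; with $b=a$ your exponents reduce to the paper's, and the $t$-power bookkeeping comes out identically.
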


\begin{proof}
Because of \cite[Lemma 2.1.6]{DN} we have $$D\mathcal{M}^{-1}_t = -\mathcal{M}^{-1}_t (D\mathcal{M}_t) \mathcal{M}^{-1}_t.$$ For higher order derivatives it follows by induction that there are constants
$C(k,j,c_1, \ldots, c_j)$ such that
\begin{align*}
D^k \mathcal{M}^{-1}_t = \sum_{j=1}^k \sum_{\substack{1 \leqslant c_1, \ldots, c_j \leqslant k \\ c_1 + \cdots + c_j = k}} C(k,j,c_1, \ldots, c_j) \left[ \mathcal{M}^{-1}_t \prod_{i=1}^j \left( (D^{c_i} \mathcal{M}_t) \mathcal{M}^{-1}_t \right) \right].
\end{align*}
By the above, (\ref{pr:bound2}) and H\"older's inequality, we deduce
\begin{align}
\mE \| D^k \mathcal{M}^{-1}_t \|_{H^{\otimes k} }^p &\lesssim \sum_{j=1}^k [\mE\|  \mathcal{M}^{-1}_t  \|^{p(1+2j)}]^{\frac{j+1}{1+2j}}  \sum_{\substack{1 \leqslant c_1, \ldots, c_j \leqslant k \\ c_1 + \cdots + c_j = k}} \prod_{i=1}^j [\mE\|  D^{c_i} \mathcal{M}_t  \|_{H^{\otimes c_i} }^{p(1+2j)}]^{\frac{1}{1+2j}} \nonumber \\
 &\lesssim \sum_{j=1}^k [\mE\|  \mathcal{M}^{-1}_t  \|^{p(1+2j)}]^{\frac{j+1}{1+2j}}  \,  \sum_{\substack{1 \leqslant c_1, \ldots, c_j \leqslant k \\ c_1 + \cdots + c_j = k}} t^{\frac{p}{2} \sum_{i=1}^j (c_i+2)}  \nonumber \\
 &\lesssim \sum_{j=1}^k[\mE\|  \mathcal{M}^{-1}_t  \|^{p(1+2j)}]^{\frac{j+1}{1+2j}} \,  t^{\frac{p}{2} (k + 2j)} \label{pr:eq1}
\end{align}
Recall that there is a constant $C=C(d)$ such that for {any non-degenerate} $d\times d$ matrix $A$,  $\Vert A^{-1} \Vert  \leqslant C \, |\text{det}(A)|^{-1}\|A\|^{d-1}$. Therefore, we have
\begin{align*}
\mE\|  \mathcal{M}^{-1}_t  \|^{p(1+2j)} &\leqslant [\mE | \det \mathcal{M}_t |^{-2(1+2j)p}]^\frac{1}{2} \; [\mE\| \mathcal{M}_t \|^{2(1+2j)p(d-1)}]^{\frac{1}{2}} \\
 &\lesssim \, [\mE | \det \mathcal{M}_t |^{-2(1+2j)p}]^\frac{1}{2} \; t^{(1+2j)p(d-1)}.
\end{align*}
Plugging the above inequality into \eqref{pr:eq1} leads promptly to \eqref{pr:bound3}.
\end{proof}

\begin{theorem} \label{pr:thm1}
Let $k\in\N$ and let $(X_t)_{t\in[0,1]}\in\frD^{k+1}$ be nondegenerate. Furthermore, assume that
such that for all $p\geq 1$ there exists $C_p>0$  such that for all $t\in(0,1]$
\begin{align}\label{333}
\mE | \det \mathcal{M}_t |^{-p} \leqslant C_p t^{-p d}.
\end{align}
 Then for all $Y \in \mathbb{D}^{k, q}$ with $q > 1$, $\varphi \in C_p^\infty(\mR^d)$, and multiindex $\alpha$ with $|\alpha| = k$ one has
\begin{align}
| \mE \partial_\alpha \varphi(X_t) Y | \leqslant N \| \varphi \|_{C^0} \| Y \|_{k,q} \, t^{-\frac{k}{2} }, \label{pr:boundsg2}
\end{align}
for all $t \in (0,1]$, where $N$ depends only on $k$, $d$, and finitely many $C_r$ and $\cD(j,r)(X_t)$ with $j=1, \ldots, k+1$ and $r \geqslant 2$.
\end{theorem}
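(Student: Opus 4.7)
The plan is to transfer the derivatives off $\varphi$ via the Malliavin integration by parts formula \eqref{eq:Mal-deri} of \cref{mc:thm1}, and then estimate the resulting iterated Skorohod integral. Since $X_t$ is nondegenerate and $X_t \in \mD^{k+1,\infty}$ through its membership in $\frD^{k+1}$ combined with \cref{pr:prop1}, the identity $\mE[(\partial_\alpha \varphi)(X_t) Y] = \mE[\varphi(X_t) Z_\alpha]$ is applicable. Pulling $\|\varphi\|_{C^0}$ out of the expectation reduces the theorem to controlling $\mE|Z_\alpha|$.

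I would then invoke the quantitative part of \cref{mc:thm1} with $p=1$, which forces $1/r = 1 - 1/q$ and yields
\begin{equation*}
\|Z_\alpha\|_{L^1_\omega} \lesssim \|\cM^{-1}_t DX_t\|_{k,\, 2^{k-1} r}^{k}\, \|Y\|_{k, q}.
\end{equation*}
The task thus collapses to proving $\|\cM^{-1}_t DX_t\|_{k, p'} \lesssim t^{-1/2}$ for $p' := 2^{k-1} r$, which, by the definition \eqref{def:norm-kp} of the Malliavin Sobolev norm, amounts to
\begin{equation*}
\bigl(\mE\|D^j(\cM^{-1}_t DX_t)\|^{p'}\bigr)^{1/p'} \lesssim t^{(j-1)/2},\qquad j=0,1,\ldots,k,
\end{equation*}
so that for $t\in(0,1]$ the $j=0$ term dominates and produces the desired $t^{-1/2}$.

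The core computation is a Leibniz expansion
\begin{equation*}
D^j(\cM^{-1}_t DX_t) = \sum_{i=0}^{j} \binom{j}{i}\, D^i \cM^{-1}_t \otimes D^{j-i+1} X_t,
\end{equation*}
after which Cauchy--Schwarz bounds each summand by the product $[\mE\|D^i\cM^{-1}_t\|^{2p'}]^{1/(2p')} \cdot [\mE\|D^{j-i+1}X_t\|^{2p'}]^{1/(2p')}$. The second factor is $\lesssim t^{(j-i+1)/2}$ by \eqref{est:bounde1}. For the first factor, when $i \geq 1$ the hypothesis \eqref{333} collapses the sum on the right of \eqref{pr:bound3} and gives $\mE\|D^i \cM^{-1}_t\|^{q} \lesssim t^{q(i-2)/2}$; the boundary case $i=0$ I would handle separately via the elementary matrix inequality $\|\cM^{-1}\| \lesssim |\det\cM|^{-1}\|\cM\|^{d-1}$ combined with \eqref{333} and the bound $\mE\|\cM_t\|^q \lesssim t^q$ (which follows from the same line of reasoning as \eqref{pr:bound2}), yielding $\mE\|\cM^{-1}_t\|^q \lesssim t^{-q}$, the same formula evaluated at $i=0$. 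Crucially, the exponents add to $(i-2)/2 + (j-i+1)/2 = (j-1)/2$ \emph{independently of $i$}, so the whole sum over $i$ meets the target bound.

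The main obstacle is exactly this $i$-independence of $(j-1)/2$: any slack in the sharp $t$-rate of either $\mE\|D^i \cM^{-1}_t\|^q$ or $\mE\|D^{j-i+1} X_t\|^q$ would break the cancellation and spoil the final $t^{-k/2}$ rate. Once the bookkeeping is verified, summation over $j\in\{0,\ldots,k\}$ together with $t \leq 1$ leaves only the $j=0$ contribution $t^{-1/2}$, and the constants gathered from \cref{mc:thm1}, \cref{pr:prop1} and \eqref{pr:bound3} depend only on $k$, $d$, the family $(C_r)$ from \eqref{333}, and finitely many $\cD(j,r)(X_t)$, matching the claimed dependence.
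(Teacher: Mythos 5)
Your proposal follows exactly the paper's route: apply the integration-by-parts formula \eqref{eq:Mal-deri} and the quantitative $L^1$ bound on $Z_\alpha$ from \cref{mc:thm1}, reduce to $\|\cM^{-1}_t DX_t\|_{k,p'} \lesssim t^{-1/2}$, expand via the Leibniz rule, and feed in \eqref{est:bounde1}, \eqref{pr:bound3}, and \eqref{333}, observing that the exponents $(i-2)/2 + (j-i+1)/2 = (j-1)/2$ telescope. You are correct, and in one spot you are in fact more careful than the paper: for $i=0$ the right-hand side of \eqref{pr:bound3} is an empty sum, so one does need the separate argument via $\|\cM_t^{-1}\| \lesssim |\det\cM_t|^{-1}\|\cM_t\|^{d-1}$, \eqref{333}, and the $k=0$ case of \eqref{pr:bound2} to get $\mE\|\cM_t^{-1}\|^{p'} \lesssim t^{-p'}$; the paper applies the formula ``for any $j\le k$'' without flagging this boundary case. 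Everything else matches.
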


\begin{proof}
Using $Z_\alpha$ from  \cref{mc:thm1}, we can write
\begin{align}
| \mE \partial_\alpha \varphi(X_t) Y | \leqslant \| \varphi \|_{C^0} \, \| Z_\alpha \|_{L^1_\omega} \lesssim \| \varphi \|_{C^0} \, \| \mathcal{M}_t^{-1} DX_t \|_{k,p}^{k} \| Y \|_{k,q}, \label{pr:bound5}
\end{align}
where $p=2^{k-1}\frac{q-1}{q}$.
In the first step, we note that
\begin{align*}
D^k (\mathcal{M}^{-1}_t DX_t) = \sum_{j=0}^k \binom{k}{j} D^j \mathcal{M}^{-1}_t \otimes D^{k-j+1} X_t.
\end{align*}
Furthermore by \eqref{pr:bound3} and \eqref{333} for $t \in [0,1]$ we obtain { for any $j\leq k$}
\begin{align*}
\mE \| D^j \mathcal{M}_t^{-1} \|_{H^{\otimes j} }^p\lesssim t^{\frac{p}{2} j} \sum_{i=1}^j t^{-p} \lesssim  t^{\frac{p}{2}(j -2)}.
\end{align*}
Putting it together with \eqref{est:bounde1} leads to
\begin{align*}
\mE \| D^k (\mathcal{M}^{-1}_t DX_t) \|_{H^{\otimes k} }^p &\lesssim \sum_{j=0}^k \big\| \| D^j M^{-1}_t \|_{H^{\otimes j} }^p \big\|_{L^2_\omega} \; \big\| \| D^{k-j+1} X_t \|_{H^{\otimes k-j+1}}^p \big\|_{L^2_\omega} \\
 &\lesssim  \sum_{j=0}^k t^{\frac{p}{2}(j -2)} \, t^{\frac{p}{2}(k-j+1)}
 \lesssim  \sum_{j=0}^k t^{\frac{p}{2}(k - 1)} \lesssim t^{\frac{p}{2}(k- 1)}.
\end{align*}
Therefore
\begin{align}\label{est-inter-Malliavin}
\| \mathcal{M}^{-1}_t DX_t \|_{k,p} \lesssim  \sum_{j=0}^{k} t^{\frac{1}{2}(j-1)}\lesssim  t^{-\frac{1}{2}},
\end{align}
and therefore from \eqref{pr:bound5} we get \eqref{pr:boundsg2}.
\end{proof}

\subsection{Estimates on the laws of certain processes}\label{sec:driftless-est}
Throughout this subsection we fix $1\geq\lambda>0$ and $K>0$
and assume that $\sigma\sigma^*$ satisfies \eqref{def-uni-ellip} with $\lambda$, and that $\|\sigma\|_{\mathcal{C}^1}\leq K$.
For now we consider the Milstein scheme without drift term:
\begin{align}\label{eq:Milstein-scheme-driftless}
  d\bar X_t^n=\Big(\sigma(\bar X_{k_n(t)}^n)+\nabla\sigma\sigma(\bar X_{k_n(t)}^n)(W_t-W_{k_n(t)})\Big)\dd W_t,\quad \bar X_0^n=y.
\end{align}
Estimates on the law of $\bar X^n$ will later be transferred to $X^n$ by a Girsanov transform.
Fix a function $\chi\in \mathcal{C}_0^\infty(\mathbb{R};\mathbb{R})$ such that $|\chi(x)|\leq |x|$ and
\begin{align}\label{def:chi}
\chi(x)=\left\{
\begin{array}{cc}
  x & \text{ if } \quad |x|\leq \frac{\kappa}{2}, \\
  0 & \text{ if } \quad |x|\geq \kappa.
\end{array}\right. \quad\quad\quad\kappa:=\frac{\lambda}{4Kd^2},
\end{align}
Note that for any $k\in\mathbb{N}$, $\vert \nabla^k \chi \vert\leq N(\lambda,K,d,k)$.
Introduce the truncated Milstein scheme corresponding to \eqref{eq:Milstein-scheme-driftless}: for any $y \in \mathbb{R}^d$ define the process $(\hat X^{n}_t (y))_{t\in[0,1]} $ by
\begin{align}\label{eq:hatX}
\dd \hat X^{n,i}_{t}  = \sum_{j=1}^{d_1}\Big( \sigma^{ij} (\hat  X^{n}_{k_n(t)} ) + \sum_{k=1}^{d_1}(\nabla \sigma \sigma)^{ijk}(\hat  X^{n}_{k_n(t)} ) \chi (W_t^k - W_{\kappa_n (t)}^k) \Big) \dd W_t^j, \quad \hat  X^{n}_0 = y.
\end{align}
Define the event $\hat\Omega\subset \Omega$ by
\begin{align}\label{def:hat-Omega}
\hat{\Omega} := \left\{ \sup_{t \in \left[\frac{k-1}{n}, \frac{k}{n} \right]} \vert W^l_t - W^l_{\frac{k}{n}} \vert \leqslant \kappa /2, \; \forall k=1,\ldots,n,\quad l=1,\ldots,d_1\right\}.
\end{align}
Note that by the assumptions on $\sigma$, $(\hat X_t^n)_{t\in[0,1]}$ coincides with $(\bar X_t^n)_{t\in[0,1]}$ on $\hat{\Omega}$.
Analogously to \cite[Proposition 5.3]{BDG} we know that there exist constants $N$ and $c > 0$ which depend only on $d$ and $\kappa$, such that
\begin{align}\label{def:pro-event}
 \mP (\hat{\Omega}) \geqslant 1 - N e^{-cn}.
\end{align}
Further, we define the auxiliary processes, $\bar Z_t(x)=(\bar Z_t^i(x))_{i=1,\ldots,{d}}$, for $x\in\R^d$, by
\begin{align}\label{def:rv-Z}
\bar Z_t^i(x):=\sum_{j=1}^{d_1}\int_{0}^{t}\sigma^{ij}(x)\dd W_r^{j}+\sum_{j,l=1}^{d_1}\int_{0}^{t}[\nabla\sigma\sigma]^{ijl} (x)\chi( W_r^j)\dd W_r^l.
\end{align}

\begin{lemma}
  \label{lemma-density-continuous}
  Let $q > 1$, $k\in\mN$, $Y \in \mathbb{D}^{k,q}$,  and $G\in \mathcal{C}^k(\mathbb{R}^{d})$. Then there exists a constant $N$ depending on $\kappa, d, d_1, \lambda, K,k$,   such that for any multi-index $\alpha$ with $|\alpha|=k$ and $t\in(0,1]$ one has the bound
  \begin{align}\label{expect-Mallianvin-1}
   \sup_{x\in\mR^d}| \mE[\partial^\alpha G(\bar Z_t(x)) Y]|\leq N \Vert G\Vert_{\infty} \| Y\|_{k,q} \, t^{-\frac{k}{2}}.
  \end{align}
\end{lemma}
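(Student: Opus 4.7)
The plan is to apply Theorem~\ref{pr:thm1} to $X_t=\bar Z_t(x)$ together with the input random variable $Y$ and test function $\varphi=G$. The remaining work reduces to showing, uniformly in $x\in\R^d$, that $(\bar Z_t(x))_{t\in[0,1]}$ belongs to $\frD^{k+1}$, is nondegenerate, and satisfies the determinant bound \eqref{333}; the constants appearing in Theorem~\ref{pr:thm1} will then depend only on $\kappa,d,d_1,\lambda,K,k$.

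First I would rewrite
$$\bar Z_t^i(x)=\sum_{l=1}^{d_1}\int_0^t u_s^{il}(x)\,\dd W_s^l,\qquad u_s^{il}(x):=\sigma^{il}(x)+\sum_{j=1}^{d_1}[\nabla\sigma\sigma]^{ijl}(x)\,\chi(W_s^j),$$
and apply Theorem~\ref{lbl:thm1} to this integrand. The ellipticity hypothesis (ii) follows from the choice $\kappa=\lambda/(4Kd^2)$ in \eqref{def:chi}: expanding $u_su_s^*=\sigma\sigma^*+\sigma E_s^*+E_s\sigma^*+E_sE_s^*$ with $E_s:=u_s(x)-\sigma(x)$ and bounding $\|E_s\|$ through $|\chi|\le\kappa$ and $|\nabla\sigma\sigma|\le K^2$ yields $u_su_s^*\ge(\lambda/2)I$ uniformly in $s,x,\omega$. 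For hypothesis (i), the explicit identity $D_r^m u_s^{il}(x)=[\nabla\sigma\sigma]^{iml}(x)\,\chi'(W_s^m)\mathbf 1_{r\le s}$ provides an $L^\infty$ bound on $D_r u_s$ depending only on $K$ and $\|\chi'\|_\infty$, so the quantity $\mu$ in \eqref{lbl:thm1cond1} is finite for every $p\ge 2$ and uniform in $x$. Theorem~\ref{lbl:thm1} then yields $\mE(\det\mathcal M_t)^{-p}\lesssim t^{-pd}$ for every $p\ge 1$, which is exactly \eqref{333}.

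To verify $(\bar Z_t(x))_{t\in[0,1]}\in\frD^{k+1}$, I would iterate \eqref{prop:commds}: each Malliavin derivative either produces a factor $\chi^{(\ell)}(W_s^{j'})$---bounded by $\|\chi^{(\ell)}\|_\infty$, which depends only on $\kappa,\lambda,K,d$---or yields another Itô integral of the same structure. Combined with $\|\sigma\|_{\mathcal C^1}\le K$ and BDG, an induction on the differentiation order shows that $\cD(j,p)(\bar Z_t(x))$ is finite and bounded uniformly in $x\in\R^d$ and $t\in[0,1]$ for every $j\le k+1$ and $p\ge 2$. The property $D_s\bar Z_t(x)=0$ for $s>t$ is automatic from $\mathcal F_t$-measurability.

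All hypotheses of Theorem~\ref{pr:thm1} are then met with $x$-uniform constants, so the pointwise bound \eqref{expect-Mallianvin-1} follows and the supremum over $x\in\R^d$ is harmless. The main---rather mild---technical obstacle is the bookkeeping needed to confirm that every implicit constant above depends only on $\kappa,d,d_1,\lambda,K,k$; this is precisely why the truncation $\chi$ with the specific $\kappa$ was introduced in the first place, so that the ellipticity constant does not degenerate no matter how large $W_s$ becomes. Beyond this point the argument is a direct application of the density-estimate machinery developed in Section~\ref{sec:Mal-basic}.
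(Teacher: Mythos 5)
Your proposal is correct and follows essentially the same route as the paper: rewrite $\bar Z_t(x)$ as an It\^o integral of $u_s=\sigma(x)+(\nabla\sigma\sigma)(x)\chi(W_s)$, use the truncation $\chi$ with the chosen $\kappa$ to secure uniform ellipticity of $u_su_s^*$ and uniform boundedness of $D_ru_s$, invoke Theorem~\ref{lbl:thm1} for the determinant bound, verify $\bar Z\in\frD^{k+1}$ by iterating the commutation identity \eqref{prop:commds} (the paper simply writes out the explicit formulas for the higher derivatives), and conclude via Theorem~\ref{pr:thm1}. The only cosmetic difference is the numerical ellipticity constant ($\lambda/2$ in your sketch versus $\lambda/4$ in the paper), which is immaterial.
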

\begin{proof}
We apply \cref{lbl:thm1} and \cref{pr:thm1}. Fix $x\in\mR^d$ and for simplicity we drop it from the notation of $\bar Z(x)$. Denote $A:=\sigma(x), B:=(\nabla\sigma\sigma)(x)$, $\chi(W_t):=(\chi(W_t^j))_{1\leq j\leq d_1}$. It is evident that $|A|,|B|,|\chi|,|\chi'|\lesssim 1$.
Let $u_t=A+B\chi(W_t)$. Firstly we have
\begin{align*}
  \mE\big[\int_{0}^{t}|u(t)|^2\dd t\big]<\infty,\quad \mu:=\sup_{s,t\in[0,1]}\mE[|D_su_t|^p]= \sup_{s\in[0,t]}\mE[|B\chi'(W_t)|^p]<\infty,  \quad \forall  p\geq 2.
\end{align*}
Therefore $u_t\in\mathbb{D}^{1,2}$ for all $t\in[0,1]$. Moreover
\begin{align*}
  u_tu_t^*= AA^{*}+AB^*\chi(W_t)+B\chi(W_t)A^*+B\chi(W_t)B^*\chi(W_t)\geq \frac{1}{4}\lambda I.
\end{align*}
Then \cref{lbl:thm1} implies that for the Malliavin matrix $\mathcal{M}_t$ of $\bar Z_t$, we have for any $\gamma\in(0,\infty)$, $t\in(0,1]$
\begin{align*}
\mE[(\text{det}\mathcal{M}_t)^{-\gamma}]\lesssim_\gamma t^{-\gamma d}.
\end{align*}
Moreover, as in \eqref{formu:inte}, we have  for $s_1,s_2,s_3,\ldots, s_{k+1}<t$,
 \begin{align*}
D_{s_1} \bar Z_t =& u_{s_1} +  \int_{s_1}^t B\chi'(W_r)  \dd W_r,\\
{D_{s_2s_1}^2} \bar Z_t =& B\chi'(W_{s_1\vee s_2})+\int_{s_1\vee s_2}^t B\chi''(W_r)  \dd W_r,\quad\cdots\\
{D_{s_{k+1}\ldots s_3s_2s_1}^{k+1}} \bar Z_t =& B\partial^k\chi(W_{s_1\vee s_2\vee s_3\vee\ldots\vee s_{k+1}})+\int_{s_1\vee s_2\vee s_3\ldots \vee s_{k+1}}^t B\partial^{k+1}\chi(W_r)  \dd W_r.
\end{align*}
From here it follows that $\bar Z\in\frD^{k+1}$
and therefore \cref{pr:thm1} yields \eqref{expect-Mallianvin-1}. % and \eqref{expect-Mallianvin-2}.
\end{proof}

\begin{lemma} \label{lemma-density-descrete}
Let   $q\geq 2$, $m\in\N$.  Let $Y \in \mathbb{D}^{m,q}$, $G\in \mathcal{C}^m(\mathbb{R}^{d})$, and assume $\sigma\in{\mathcal{C}^{m+2}}$.
  Let $(\hat X^{n}_t)_{t\in[0,1]}$ be the solution to \eqref{eq:hatX}.  Then there exists a constant $N$ %  (resp. $N'$)
      depending on $ \kappa, d, d_1,\lambda,K,k$, and  $\Vert\sigma\Vert_{\mathcal{C}^{m+2}}$
      such that for any multi-index $\alpha$ with $|\alpha|=m$ all $t\in(0,1]$ one has the bound
  \begin{align}
   \sup_{x\in\R^d}\big| \mE[\partial^\alpha G(\hat X^{n}_t(x)) Y]\big|&\leq N \Vert G\Vert_{L^\infty_x} \| Y \|_{m,q} \, t^{-\frac{m}{2}}.\label{expect-Mallianvin-hat0-infty}
  \end{align}
\end{lemma}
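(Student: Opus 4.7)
The plan is to verify the hypotheses of \cref{pr:thm1} for the random variable $\hat X^n_t(x)$, uniformly in $x\in\R^d$ and $n\in\N$. Once this is done, \eqref{expect-Mallianvin-hat0-infty} follows immediately from \eqref{pr:boundsg2}. Concretely, we need to show that $\hat X^n_t(x)\in\frD^{m+1}$ with $\cD(j,r)(\hat X^n_t(x))$ bounded uniformly in $x,n,t$ (for $j\leq m+1$ and each $r\geq 2$), and that the Malliavin matrix $\cM_t$ of $\hat X^n_t(x)$ satisfies the nondegeneracy bound \eqref{333} with constants independent of $n,x$.

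For the nondegeneracy, I would invoke \cref{lbl:thm1} with the adapted integrand $u_s:=\sigma(\hat X^n_{k_n(s)})+\nabla\sigma\sigma(\hat X^n_{k_n(s)})\chi(W_s-W_{k_n(s)})$. The ellipticity hypothesis (ii) is verified using the definition of the truncation: writing out $u_su_s^*$ as $\sigma\sigma^*+$ cross terms $+\,(\nabla\sigma\sigma\chi)(\nabla\sigma\sigma\chi)^*$, the first term is bounded below by $\lambda I$ by \eqref{def-uni-ellip}, while the cross terms have operator norm bounded by a constant multiple of $K^2 d |\chi|$. The choice $\kappa=\lambda/(4Kd^2)$ in \eqref{def:chi} ensures these corrections cannot dominate, giving $u_su_s^*\geq \tfrac12\lambda I$, so hypothesis (ii) holds with $\lambda_*=\tfrac12\lambda$. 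Hypothesis (i), i.e.\ the uniform bound $\sup_{s,r}\mE\|D_su_r\|^p<\infty$, follows from boundedness of $\nabla\sigma$, $\nabla^2\sigma$, $\chi$, $\chi'$, together with a uniform bound on Malliavin derivatives of $\hat X^n$ itself (see the next paragraph). This yields $\mE|\det\cM_t|^{-\gamma}\lesssim t^{-\gamma d}$ for all $\gamma>0$, uniformly in $x,n$.

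For the $\cD(j,r)$-bounds up to order $m+1$, I would differentiate the defining SDE \eqref{eq:hatX} in the Malliavin sense recursively. For $s<t$,
\begin{equation*}
D_s\hat X^n_t=u_s+\int_s^t D_s u_r\,\dd W_r,
\end{equation*}
and $D_s u_r$ in turn involves $\nabla\sigma(\hat X^n_{k_n(r)})D_s\hat X^n_{k_n(r)}$, analogous terms for $\nabla\sigma\sigma$, and a contribution $\nabla\sigma\sigma(\hat X^n_{k_n(r)})\chi'(W_r-W_{k_n(r)})\mathds 1_{s>k_n(r)}$ coming from the noise increment. Iterating $j$ times produces an expression involving derivatives of $\sigma$ up to order $j+1$, derivatives of $\chi$ up to order $j$, and lower-order Malliavin derivatives of $\hat X^n$, all of which are bounded. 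A standard induction on $j$ using the BDG inequality, the boundedness of $\sigma\in\mathcal{C}^{m+2}$, $\chi$, and their derivatives, then yields $\sup_{s_1,\dots,s_j}\mE\|D^j_{s_1,\dots,s_j}\hat X^n_t\|^r\lesssim 1$ for each $1\leq j\leq m+1$, $r\geq 2$, uniformly in $t\in[0,1]$, $n\in\N$ and $x\in\R^d$ (translation invariance in $x$ is immediate since $x$ only enters through the initial condition). Moreover $D_s\hat X^n_t=0$ for $s>t$ by adaptedness of the integrand. Hence $\hat X^n_t\in\frD^{m+1}$ with the required uniform control.

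Combining these two ingredients, all hypotheses of \cref{pr:thm1} are met uniformly in $x,n$, so \eqref{pr:boundsg2} applied with $k=m$ delivers \eqref{expect-Mallianvin-hat0-infty}. The main obstacle is the ellipticity step: one must exploit the truncation in \eqref{def:chi} with precisely the constant $\kappa$ tuned to the parameters $\lambda, K, d$ so that the Milstein correction term cannot spoil the ellipticity of $\sigma\sigma^*$. The inductive Malliavin bounds on $\hat X^n$ are technically involved but conceptually routine given the boundedness of $\sigma,\nabla\sigma,\ldots,\nabla^{m+2}\sigma$ and the compactly supported truncation $\chi$.
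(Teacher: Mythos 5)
Your proposal follows essentially the same route as the paper: verify the hypotheses of Theorem \ref{pr:thm1} by (a) establishing $\hat X^n_t(x)\in\frD^{m+1}$ with uniform $\cD(j,r)$-bounds via recursive Malliavin differentiation of \eqref{eq:hatX} together with BDG/Gronwall, and (b) invoking Theorem \ref{lbl:thm1} with the integrand $u_s=\sigma(\hat X^n_{k_n(s)})+\nabla\sigma\sigma(\hat X^n_{k_n(s)})\chi(W_s-W_{k_n(s)})$, whose ellipticity is preserved by the truncation $\chi$ with the tuned constant $\kappa$. The only cosmetic difference is the ellipticity constant (the paper records $u_su_s^*\geq\tfrac{\lambda}{4}I$ rather than $\tfrac{\lambda}{2}I$), which is immaterial.
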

\begin{proof}
We want to conclude  \eqref{expect-Mallianvin-hat0-infty} by applying \cref{lbl:thm1} and \cref{pr:thm1}.
First we want to show that $\sigma\in{\mathcal{C}^{m+2}}$ implies $(\hat X^n_t)_{t\in[0,1]}\in\frD^{m+1}$, with furthermore
\begin{align}\label{bound:finite-derivative}
\cD(1,r)(\hat X_t^n),\ldots,\cD(m+1,r)(\hat X_t^n)\lesssim_r 1.%,\qquad \cD(2,p)(\hat X^n)\lesssim 1,\qquad\cD(3,p)(\hat X^n)\lesssim'1.
\end{align}
Since $D_s\hat X^n_t=0$ for $s>t$ is obvious, one only needs to show \eqref{bound:finite-derivative}.
We only detail the argument for showing $\cD(1,r)(\hat X_t^n)\lesssim_r 1$ which corresponds to $m=0$ (for $m> 0$ the bound can be obtained by the { same} induction argument as in e.g. \cite[Proof of Proposition 5.2]{Hai}).
The argument will actually give more: for any $s$ that is not a gridpoint, we show
\begin{equ}\label{eq:almostdone1}
    \E\sup_{t\in[s,1]}\|D_s \hat X^n_t\|^r\lesssim_r 1.
\end{equ}
Set $k_n^+(s)=k_n(s)+1/n$. First note that for $t\in[s,k_n^+(s)]$ we have
\begin{align}%\label{eq:mallianvin-sde-1}
D_s{}\hat  X_t^{n}=\sigma(\hat  X_{k_n(s)}^n)&+\nabla\sigma\sigma(\hat  X_{k_n(s)}^n)\chi(W_s-W_{k_n(s)})
+
\int_{s}^{t}\nabla\sigma\sigma(\hat  X_{k_n(s)}^n)\chi'(W_r-W_{k_n(s)})\dd W_r.\nonumber
\end{align}
From $\sigma\in\mathcal{C}^2$ (here even $\sigma\in\mathcal{C}^1$ suffices) {together with BDG’s inequality} it is clear that
\begin{equ}\label{eq:almostdone}
    \E\sup_{t\in[s,k_n^+(s)]}\|D_s \hat X^n_t\|^r\lesssim_r 1.
\end{equ}
If $t>k_n^+(s)$, one can inductively obtain
\begin{align}
D_s{}\hat  X_t^{n}=&D_s{}\hat  X_{k_n(t)}^{n}
+\int_{k_n(t)}^{t}\big(\nabla \sigma(\hat  X_{k_n(t)}^n)+\nabla(\nabla\sigma\sigma)(\hat  X_{k_n(t)}^n)\chi(W_r-W_{k_n(t)})\big)^*D_s{}\hat  X_{k_n(t)}^{n}\dd W_r\nonumber
\\
=&D_s{}\hat  X_{k_n^+(s)}^{n}+
\int_{k_n^+(s)}^{t}\big(\nabla \sigma(\hat  X_{k_n(r)}^n)+\nabla(\nabla\sigma\sigma)(\hat  X_{k_n(r)}^n)\chi(W_r-W_{k_n(r)})\big)^*D_s{}\hat  X_{k_n(r)}^{n}\dd W_r.\nonumber
\end{align}
This is simply a linear delay equation for $t\mapsto D_s \hat X^n_t$ with bounded coefficients, since  $\sigma\in\mathcal{C}^2$.
From here it is classical (by BDG and Gronwall inequalities) to get
\begin{align}\label{est:Ds-hat-Xn-1}
     \mE\sup_{t\in[k_n^+(s),1]}\Vert D_s\hat  X_{t}^{n}\Vert^{r}\lesssim_r \mE\Vert D_s\hat  X_{k_n^+(s)}^{n}\Vert^{r},
\end{align}
%So by \eqref{est:Ds-hat-Xn} and \eqref{est:Ds-hat-Xn-1} we can conclude   $\cD(1,\rho)(X_t)\lesssim_\rho1$.
which, combined with \eqref{eq:almostdone} yields the claimed bound \eqref{eq:almostdone1}.

  Now we  denote
  \begin{align*}
A_t:=\sigma({}\hat  X_{k_n(t)}),\quad \quad B_t:=(\nabla\sigma\sigma)({}\hat  X_{k_n(t)}^n)\chi(W_t-W_{k_n(t)})
  \end{align*}
and set  $u_t :=  A_t+B_t$. By the definition of $\chi$, $A_t$ and $B_t$ one then has
  \begin{align}\label{con:elliptic-diffusion}
   u_t u_t^* =(A_t+B_t)(A_t+B_t)^*\geq \frac{\lambda}{4}I.
  \end{align}
Condition \eqref{lbl:thm1cond1} holds for any $p\geq 2$ since $\sigma\in\mathcal{C}^2$ and one can apply \cref{lbl:thm1} to $u_t$. Thus for the Malliavin matrix $\mathcal{M}_t$ of $\int_0^t u_r dW_r = \hat X^{n}_t$ holds
\begin{align*}
\mE [(\det \mathcal{M}_t)^{-\gamma} ] \lesssim_\gamma \, t^{- \gamma d}
\end{align*}
for $t \in (0,1]$ and all $\gamma\geq 2$. Hence $\hat X_t^n$ is non-degenerate and $\hat X_t^n\in\mD^{1,p}$ for all $p>2$ and \eqref{bound:finite-derivative} holds for $m=0$ as claimed.
By \cref{pr:thm1} we  get \eqref{est-inter-Malliavin}.
\end{proof}

\section{Intermediate estimates on Milstein scheme}\label{sec:inte-est-Mil}
Before showing the desired estimates we recall the following version of { the} \emph{stochastic sewing lemma}, originating from \cite{Le}.
Let \begin{align*}
&[S,T]_\leq^2:=\left\{(s,t)\in[S,T]^2:S\leq s\leq t\leq T\right\},
  \\  &[S,T]_\leq^3:=\left\{(s,u,t)\in[S,T]^3:S\leq s\leq u\leq t\leq T\right\}.
%   \\  &[S,T]_\leq^3:=\left\{(s,u,t)\in[S,T]^3:S\leq s\leq u\leq  t\leq T, |t-s|\leq S\right\}.
    \end{align*}
    %For any one parameter process $t\mapsto A_t$ and two parameters process $(s,t)\mapsto A_{s,t}$ we denote
    Given a two-parameter process $(s,t)\mapsto A_{s,t}$, we set for $(s,u,t)\in[S,T]_\leq^3$
    $$ \delta A_{s,u,t}:=A_{s,t}-A_{u,t}-A_{s,u}.$$
    \begin{lemma}
        \cite[Theorem {2.3}]{Le}\label{lemm:sewing-lemm} Let $p\geq2$, $0\leq S\leq T\leq 1$. Let {$(A_{s,t})_{S\leq s\leq t\leq T }$} be a two parameter field with values in $\R^d$ such that for each $s\leq t$,  $A_{s,t}$ is $\cF_t$-measurable. Suppose that for some $\epsilon_1,\epsilon_2>0$ and $C_1,C_2$ the bounds
        \begin{align}
         \Vert A_{s,t}\Vert_{L_\omega^{p}}&\leq C_1|t-s|^{\frac{1}{2}+\epsilon},\label{sewing-con-1}\\
         \Vert \mE_s\delta A_{s,u,t}\Vert_{L_\omega^{p}}&\leq C_2|t-s|^{1+\epsilon_2},\label{sewing-con-2}
         \end{align}
         hold for all $(s,u,t)\in[S,T]_\leq^3$. Then there exists a unique (up to modification) $(\mathcal{F}_t)_{t\in[0,1]}$-adapted process $\mathcal{A}:[S,T]\rightarrow L_p(\Omega)$ such that $\mathcal{A}_S=0$ and the following bounds hold for some constants $K_1$, $K_2>0$:
         \begin{align}
         \Vert \mathcal{A}_t-\mathcal{A}_s-A_{s,t}\Vert_{L_\omega^{p}}&{\leq K_1|t-s|^{\frac{1}{2}+\epsilon_1}},\quad (s,t)\in[S,T]_\leq^2\label{lem:sewing-est-1}\\
         \Vert \mE_s[\mathcal{A}_t-\mathcal{A}_s-A_{s,t}]\Vert_{L_\omega^{p}}&\leq K_2|t-s|^{1+\epsilon_2},\quad (s,t)\in[S,T]_\leq^2.\label{lem:sewing-est-2}
         \end{align}
          Moreover, there exists a constant $K$ depending only on $\epsilon_1$ and $\epsilon_2,d$ such that $\mathcal{A}$ satisfies the bound
    \begin{align}
        \label{lem:sewing-est-3}
        \Vert \mathcal{A}_t-\mathcal{A}_s\Vert_{L_\omega^{ p}}&\leq KpC_1|t-s|^{\frac{1}{2}+\epsilon_1}+KpC_2|t-s|^{1+\epsilon_2},\quad (s,t)\in[S,T]_\leq^2.
    \end{align}
    \end{lemma}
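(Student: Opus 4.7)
The plan is to follow the standard dyadic construction underlying the (stochastic) sewing lemma. Given $(s,t)\in[S,T]_\leq^2$, denote by $\pi_n$ the dyadic partition of $[s,t]$ into $2^n$ equal subintervals $s=u_0^n<u_1^n<\cdots<u_{2^n}^n=t$, and consider the Riemann-type sum
\begin{equation*}
\mathcal{A}^n_{s,t}:=\sum_{i=0}^{2^n-1}A_{u_i^n,u_{i+1}^n}.
\end{equation*}
I would first show that $(\mathcal{A}^n_{s,t})_n$ is Cauchy in $L^p_\omega$ and identify its limit with the increment $\mathcal{A}_t-\mathcal{A}_s$ of the desired process.

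The heart of the argument is to control the increment $\mathcal{A}^{n+1}_{s,t}-\mathcal{A}^n_{s,t}$, which is a sum of $2^n$ terms of the form $-\delta A_{u_i^n,m_i^n,u_{i+1}^n}$, where $m_i^n$ is the dyadic midpoint. I would split each such term as
\begin{equation*}
\delta A_{u_i^n,m_i^n,u_{i+1}^n}=\mE_{u_i^n}\delta A_{u_i^n,m_i^n,u_{i+1}^n}+\bigl(\delta A_{u_i^n,m_i^n,u_{i+1}^n}-\mE_{u_i^n}\delta A_{u_i^n,m_i^n,u_{i+1}^n}\bigr).
\end{equation*}
For the conditional expectation summands, hypothesis \eqref{sewing-con-2} and the triangle inequality give a total contribution of order $2^n\cdot C_2\bigl((t-s)2^{-n}\bigr)^{1+\epsilon_2}=C_2(t-s)^{1+\epsilon_2}2^{-n\epsilon_2}$, which is summable in $n$. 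For the centred summands, the key observation is that they form a sequence of $\mathcal{F}_{u_{i+1}^n}$-measurable increments that are martingale differences relative to the filtration $(\mathcal{F}_{u_i^n})$; the discrete Burkholder--Davis--Gundy inequality together with \eqref{sewing-con-1} yields an $L^p_\omega$-bound of order $\sqrt{p}\bigl(2^n\cdot C_1^2((t-s)2^{-n})^{1+2\epsilon_1}\bigr)^{1/2}=\sqrt{p}\,C_1(t-s)^{1/2+\epsilon_1}2^{-n\epsilon_1}$, again geometrically summable. Telescoping and summing the geometric series produces both \eqref{lem:sewing-est-1}--\eqref{lem:sewing-est-2} and \eqref{lem:sewing-est-3}, with constants $K_1,K_2,K$ depending only on $\epsilon_1,\epsilon_2,d,p$.

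To define $\mathcal{A}$ globally on $[S,T]$, I would set $\mathcal{A}_t:=\lim_n \mathcal{A}^n_{S,t}$ in $L^p_\omega$, verify adaptedness (each $\mathcal{A}^n_{S,t}$ is $\mathcal{F}_t$-measurable by the measurability assumption on $A_{s,t}$), and check consistency $\mathcal{A}_t-\mathcal{A}_s=\lim_n \mathcal{A}^n_{s,t}$ using additivity of the Riemann sums on nested partitions. Uniqueness is proven by applying exactly the same dyadic decomposition to the difference $\mathcal{B}_t:=\mathcal{A}_t-\tilde{\mathcal{A}}_t$ of two candidates: writing $\mathcal{B}_t-\mathcal{B}_s$ as a telescoping sum along $\pi_n$, the martingale BDG step contributes $O(2^{-n\epsilon_1})$ while the conditional-expectation step contributes $O(2^{-n\epsilon_2})$, forcing $\mathcal{B}_t=\mathcal{B}_s$ for all $s\leq t$, hence $\mathcal{B}\equiv 0$ since $\mathcal{B}_S=0$.

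The main obstacle is the BDG step for the centred part: it is essential that after subtracting conditional expectations one genuinely obtains a martingale difference sequence so that the gain of $\sqrt{2^n}$ (rather than $2^n$) is available, which is precisely what turns the non-summable estimate $2^n\cdot|t-s|^{1/2+\epsilon_1}2^{-n(1/2+\epsilon_1)}$ into the summable $2^{-n\epsilon_1}$. Careful bookkeeping of the $p$-dependence of the BDG constant is needed to obtain the explicit linear factor $p$ in \eqref{lem:sewing-est-3}; this follows from the classical $\sqrt{p}$-scaling of the BDG constant for discrete martingales, after squaring and summing two such contributions.
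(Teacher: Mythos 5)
The paper does not prove this lemma at all: it is cited verbatim as \cite[Theorem~2.3]{Le}, so there is no in-paper proof to compare against. Your proposal is a faithful outline of L\^e's original argument, which is exactly the dyadic construction you describe: split each $\delta A_{u_i^n,m_i^n,u_{i+1}^n}$ into its $\mathcal{F}_{u_i^n}$-conditional expectation (controlled by \eqref{sewing-con-2}, summed with the triangle inequality, geometric in $n$) and the centred remainder, which forms a martingale difference sequence with respect to $(\mathcal{F}_{u_i^n})_i$ and is handled by discrete BDG plus Minkowski and \eqref{sewing-con-1}, yielding the crucial $\sqrt{2^n}$ gain that makes the series converge.

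Two points you should tighten if this were to be written out in full. First, the consistency step $\mathcal{A}_t-\mathcal{A}_s=\lim_n\mathcal{A}^n_{s,t}$ is not immediate from $\mathcal{A}_t:=\lim_n\mathcal{A}^n_{S,t}$, because the dyadic grid of $[S,t]$ does not contain $s$ and the Riemann sums over $[S,t]$ and $[S,s]\cup[s,t]$ are not nested; one must either compare two arbitrary partitions (as in L\^e's proof) or insert $s$ into the grid and estimate the mismatch with \eqref{sewing-con-1}--\eqref{sewing-con-2} before passing to the limit. Second, your $p$-bookkeeping: the sharp BDG constant for discrete martingales at exponent $p$ is $O(\sqrt{p})$, so your route actually gives a bound of order $\sqrt{p}\,C_1|t-s|^{1/2+\epsilon_1}+C_2|t-s|^{1+\epsilon_2}$, which is stronger than the stated $KpC_1|t-s|^{1/2+\epsilon_1}+KpC_2|t-s|^{1+\epsilon_2}$; the phrase ``after squaring and summing two such contributions'' does not transparently produce a linear-in-$p$ factor, but since $\sqrt{p}\le p$ for $p\ge 2$ the stated inequality follows in any case.
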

%%%%%%%%%%%%%%%%%%%%%%%%%%%%%%%%%%%%%%%%%%%%%%%%%%%%%%%%%%%%%%%%%%

 \subsection{Estimates on additive functionals}
\begin{lemma}\label{lem:driftless}
 Let $\alpha\in(0,1)$,  $\epsilon\in(0,\frac{1}{2})$, $\alpha'\in(1-2\epsilon,1), p\geq 2$.   Suppose  that $(H^\sigma)$ in \cref{ass:main} holds and that $ (\hat X_t^n(y))_{t\in[0,1]}$ is the solution to \eqref{eq:hatX}, $y\in\mathbb{R}^d$. Then for all  functions $h\in \mathcal{C}^{\alpha'}, f\in \mathcal{C}^\alpha$, $(s,t)\in[0,1]_\leq^2$, the following holds
 \begin{align}\label{est:driftless}
  \Big \Vert \int_s^t h(\hat X^n_r)\big(f(\hat X^n_r)-f(\hat X^n_{k_n(r)})\big)\dd r\Big\Vert_{L^p_\omega}\leq N\Vert f\Vert_{\mathcal{C^\alpha}}\Vert h\Vert_{\mathcal{C}^{\alpha'}}n^{-\frac{\alpha+1}{2}+\epsilon}|t-s|^{\frac{1}{2}+\epsilon}
 \end{align}
 with $N=N(p,d, d_1, \Vert\sigma\Vert_{\mathcal{C}^3},\alpha,\alpha',\lambda, \epsilon)$.%\todo{Conflicts on stochastic sewing lemma, with or without $\epsilon$?}
\end{lemma}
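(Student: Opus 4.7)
The plan is to apply the stochastic sewing lemma (\cref{lemm:sewing-lemm}) to the germ
\[
A_{s,t} := \int_s^t \mE_s\big[h(\hat X^n_r)\big(f(\hat X^n_r)-f(\hat X^n_{k_n(r)})\big)\big]\,\dd r
\]
with candidate sewing process $\mathcal{A}_t := \int_0^t h(\hat X^n_r)\big(f(\hat X^n_r)-f(\hat X^n_{k_n(r)})\big)\,\dd r$. Writing the integrand as $g_r$, one checks by construction that $\mE_s[\delta A_{s,u,t}] = 0$ and $\mE_s[\mathcal{A}_t - \mathcal{A}_s - A_{s,t}] = 0$; the crude triangle bound $\|\mathcal{A}_t - \mathcal{A}_s - A_{s,t}\|_{L^p_\omega} \lesssim (t-s) n^{-\alpha/2}$ combined with $t-s \leq 1$ gives \eqref{lem:sewing-est-1}. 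So \eqref{sewing-con-2} holds with $C_2 = 0$ and the uniqueness clauses of \cref{lemm:sewing-lemm} identify $\mathcal{A}$ as the sewing process; the output bound \eqref{lem:sewing-est-3} will then produce \eqref{est:driftless}, provided one verifies \eqref{sewing-con-1} with $C_1 \lesssim \|h\|_{\mathcal{C}^{\alpha'}}\|f\|_{\mathcal{C}^\alpha} n^{-(1+\alpha)/2 + \epsilon}$.

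The core estimate is therefore the bound on $\|A_{s,t}\|_{L^p_\omega}$. To separate the two Hölder differences, I would decompose $g_r = g_r^{(1)} + g_r^{(2)}$ via
\[
g_r^{(1)} := h(\hat X^n_{k_n(r)})\big(f(\hat X^n_r)-f(\hat X^n_{k_n(r)})\big),\qquad g_r^{(2)} := \big(h(\hat X^n_r)-h(\hat X^n_{k_n(r)})\big)\big(f(\hat X^n_r)-f(\hat X^n_{k_n(r)})\big).
\]
For $g^{(2)}$, the pointwise estimate $\|g_r^{(2)}\|_{L^p_\omega} \lesssim \|h\|_{\mathcal{C}^{\alpha'}}\|f\|_{\mathcal{C}^\alpha}(r-k_n(r))^{(\alpha+\alpha')/2} \lesssim n^{-(\alpha+\alpha')/2}$ integrates over $[s,t]$ to the desired order, after invoking $\alpha' > 1-2\epsilon$ and $t-s \leq 1$ to absorb the remaining loss into $\epsilon$.

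For $g^{(1)}$ the Markov property of the Milstein scheme gives
\[
\mE_s g_r^{(1)} = \mE_s\big[h\cdot(P^{MS}_\tau f - f)(\hat X^n_{k_n(r)})\big],\qquad \tau := r - k_n(r)\leq n^{-1},
\]
where $P^{MS}_\tau$ denotes the one-step Milstein transition operator. The naive estimate $\|P^{MS}_\tau f - f\|_\infty \lesssim \|f\|_{\mathcal{C}^\alpha}\tau^{\alpha/2}$ yields $\|\mE_s g_r^{(1)}\|_{L^p_\omega}\lesssim \|h\|_\infty\|f\|_{\mathcal{C}^\alpha} n^{-\alpha/2}$, which is already sharp enough whenever $r-s\lesssim n^{-1}$. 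In the complementary regime $r-s\gg n^{-1}$ I would prove the improved bound
\[
\|\mE_s g_r^{(1)}\|_{L^p_\omega}\lesssim \|h\|_{\mathcal{C}^{\alpha'}}\|f\|_{\mathcal{C}^\alpha}\,n^{-1}(r-s)^{-1+\alpha/2+\epsilon},
\]
by writing $P^{MS}_\tau f - f = \int_0^\tau L^{MS} P^{MS}_u f\,\dd u$, integrating by parts against the conditional density of $\hat X^n_{k_n(r)}$ given $\cF_s$ (whose smoothness at scale $(k_n(r)-s)^{1/2}$ is furnished by \cref{lemma-density-descrete}), and transferring the rough derivatives off $f$ via a Hölder–Besov duality argument applied after mollification of $f$ at scale $\tau^{1/2}$. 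Splitting $\int_s^t$ at $r-s = n^{-1}$ and integrating the two bounds gives exactly $\|A_{s,t}\|_{L^p_\omega}\lesssim n^{-(1+\alpha)/2+\epsilon}(t-s)^{1/2+\epsilon}$.

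The main obstacle is this improved bound for large $r-s$: because the Milstein scheme has no clean heat-kernel theory, all smoothing must come through the Malliavin estimates of \cref{sec:Mal}, and the delicate balance between the step size $n^{-1}$ and the gap $r-s$ — together with the interplay of the Hölder regularity of both $f$ and $h$ — is the core technical point that makes the argument tighter than a mere martingale decomposition.
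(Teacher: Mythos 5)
Your overall architecture (stochastic sewing against a conditioned germ, with the heavy lifting coming from Malliavin integration by parts against the density of $\hat X^n$) is the same as the paper's, but you set it up differently and the central estimate is left unjustified — and, as stated, appears to be out of reach under $(H^\sigma)$.

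Regarding the set-up: you take $A_{s,t}=\int_s^t\mE_s[g_r]\,\dd r$ with $g_r = h(\hat X^n_r)(f(\hat X^n_r)-f(\hat X^n_{k_n(r)}))$, so that $\mE_s\delta A_{s,u,t}=0$ and condition \eqref{sewing-con-2} is trivial; you then split $g_r = g_r^{(1)}+g_r^{(2)}$ by freezing $h$ at $k_n(r)$. The paper instead freezes $h$ at $s$, i.e.\ uses $A_{s,t}=\mE_s\int_s^t h(\hat X^n_s)(f(\hat X^n_r)-f(\hat X^n_{k_n(r)}))\,\dd r$, which lets one pull $\|h\|_\infty$ out of the $L^p_\omega$ norm, at the cost of a nontrivial $\mE_s\delta A_{s,u,t}$ that is controlled via $\|h(\hat X^n_s)-h(\hat X^n_u)\|_{L^{2p}}\lesssim \|h\|_{\mathcal{C}^{\alpha'}}|u-s|^{\alpha'/2}$; this is exactly where the hypothesis $\alpha'>1-2\epsilon$ enters. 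In your version the same hypothesis enters through the pointwise bound on $g^{(2)}$. Both are legitimate.

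The real gap is the bound you assert for $g^{(1)}$ in the regime $r-s\gg n^{-1}$, namely
\[
\big\|\mE_s g_r^{(1)}\big\|_{L^p_\omega}\lesssim \|h\|_{\mathcal{C}^{\alpha'}}\|f\|_{\mathcal{C}^\alpha}\,n^{-1}(r-s)^{-1+\alpha/2+\epsilon}.
\]
This is not proved; the ``$P^{MS}_\tau f - f = \int_0^\tau L^{MS} P^{MS}_u f\,\dd u$, integrate by parts, transfer derivatives by H\"older--Besov duality after mollifying at scale $\tau^{1/2}$'' sketch would, if carried out, hit the second derivative $\partial^2 f$ (or $\partial^2$ of a mollification of $f$) and then require a \emph{second-order} Malliavin integration by parts against the conditional law of $\hat X^n$, i.e.\ \cref{pr:thm1} with $k=2$. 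But under the stated assumption $\sigma\in\mathcal{C}^3$ the proof of \cref{lemma-density-descrete} only yields $\hat X^n\in\frD^2$ and hence only first-order IBP ($k=1$) is available; $k=2$ would need $\sigma\in\mathcal{C}^4$. Consistent with this, the paper proves the weaker (but sufficient) bound
\[
\big|\mE_{\frac{k+1}{n}} g(\hat X^n_{k_n(r)}(y))\big|\lesssim \|f\|_{\mathcal{C}^\alpha} n^{-\frac{1+\alpha}{2}}(r-s)^{-\frac12}
\]
by treating the endpoint cases $\alpha=0$ (Newton--Leibniz representation plus one IBP, gaining $n^{-1/2}$ from $\|Y\|_{1,2}$ and $(r-s)^{-1/2}$ from the density) and $\alpha=1$ (It\^o expansion plus one IBP, gaining $n^{-1}$ from the $\int_{k_n(r)}^r$ time integral and $(r-s)^{-1/2}$ from the density), and then interpolating. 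You should either restrict to one IBP and re-derive the $n^{-(1+\alpha)/2}(r-s)^{-1/2}$ bound (which integrates to what you need, after paying an $\epsilon$ by $n^{-1}\lesssim|t-s|$ on the relevant range), or strengthen the hypothesis to $\sigma\in\mathcal{C}^4$ if you insist on the $n^{-1}(r-s)^{-1+\alpha/2}$ rate. As written, the key inequality is neither established nor, in the claimed form, obtainable with the tools the paper has set up under $(H^\sigma)$.

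A smaller point: you also need to verify \eqref{pr:boundsg2} is applicable to the object $\hat X^{n,\theta}(z)$ (the $\theta$-interpolant inside Newton--Leibniz) rather than $\hat X^n$ itself, and the Malliavin matrix of $\hat X^{n,\theta}$ degenerates as $\theta\to 0$ on the last step; the paper addresses this with \cref{rem:est-theta}, whose role your sketch does not mention.
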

\begin{proof}
  We partially follow and partially refine the arguments of \cite[Lemma 3.1]{DGL} (see also \cite[Lemma 6.1]{BDG}). { By the standard approximation argument it is sufficient to show that \eqref{est:driftless} holds for  smooth $f,h$. }
   Define $k$ by $\frac{k}{n}=k_n(s)$ for $s\in[0,1]$.  Let
  \begin{align*}
A_{s,t}:=\mE_s[\mathcal{A}_{s,t}]:=\mE_s\int_{s}^{t}h(\hat X^n_s)(f(\hat X_r^n)-f(\hat X_{k_n(r)}^n))\dd r.
  \end{align*}
  In order to apply \cref{lemm:sewing-lemm}, we are going to  verify \eqref{sewing-con-1} first. Using the $\cF_s$-measurability of $h(\hat X^n_s)$, we write
  \begin{align*}
    \Vert  A_{s,t}\Vert_{L^p_\omega}\leq \Vert h\Vert_{\mB}\tilde A_{s,t}:=  \Vert h\Vert_{\mB}\int_{s}^{t}\big\Vert\mE_s\big[f(\hat X_r^n)-f(\hat X_{k_n(r)}^n)\big]\big\Vert_{L^p_\omega}\dd r.
  \end{align*}
Depending on the relation of the various variables,
there are several trivial cases, which we deal with first.
If $t\in[s,\frac{k+4}{n}]$, then using the bound
\begin{align}\label{est:diff-barX}
 \big \Vert \sup_{r\in[0,1]}|\hat X_r^n-\hat X_{k_n(r)}^n  \big\Vert_{L^m_\omega}\lesssim_m n^{-\frac{1}{2}},
\end{align}
for any $m\in(0,\infty)$,
we get for any $\epsilon\in(0,\frac{1}{2})$
 \begin{align}\label{est:case-I}
 \tilde A_{s,t}&
   \lesssim \Vert f\Vert_{\mathcal{C}^\alpha}\int_{s}^{t}n^{-\frac{\alpha}{2}}\dd r\lesssim \Vert f\Vert_{\mathcal{C}^\alpha}n^{-\frac{1+\alpha}{2}+\epsilon}|t-s|^{\frac{1}{2}+\epsilon}
 \end{align}
 since $|t-s|\leq 4n^{-1}$.
In the sequel we assume $t>\frac{k+4}{n}$. We write
  \begin{align}\label{eq:A}
  \tilde A_{s,t}
   & \leq \Big(\int_{s}^{\frac{k+4}{n}}+\int^{t}_{\frac{k+4}{n}}\Big)\big\Vert\mE_s\big[f(\hat X_r^n)-f(\hat X_{k_n(r)}^n)\big]\big\Vert_{L^p_\omega}\dd r
   =:S_1+S_2.
  \end{align}
  The term $S_1$ is as simple as before:
  \begin{align}\label{est:S1-Ho}
    S_1\lesssim \Vert f\Vert_{\mathcal{C}^{\alpha}}n^{-\frac{1+\alpha}{2}+\epsilon}|t-s|^{\frac{1}{2}+\epsilon}.
  \end{align}
  For $S_2$, notice that
  \begin{align*}
    S_2=\int^{t}_{\frac{k+4}{n}}\big\Vert \mE_s\mE_{\frac{k+1}{n}}(\mE_{k_n(r)}f(\hat X_r^n)-f(\hat X_{k_n(r)}^n))\big\Vert_{L^p_\omega}\dd r.
  \end{align*}
 For $x,y\in\mR^d$, let
  \begin{align*}
    \Sigma(x,y):&=\nabla\sigma\sigma(x)\chi(y),\quad \quad   A(x):=(\sigma\sigma^*)(x),
    \\B(x,y):&=(\Sigma\Sigma^*)(x,y),\quad \quad C(x,y):=\sigma(x)\Sigma(x,y),
  \end{align*}
  and define %{\color{blue} considering $k_n(r)=k_n(xi)$ for $\xi\in[k_n(r),r]$}
  \begin{align}\label{eq:g}
    g(x):=g_r^n(x):=\mE\Big(f\big(x+\int_{k_n(r)}^{r}[\sigma(x)+\Sigma(x,W_\xi-W_{k_n({\color{blue}\xi})})]\dd W_\xi\big)-f(x)\Big).
  \end{align}
  Then by the Markov property
  \begin{align}\label{eq:rewrite-g}
     S_{2}=\int^{t}_{\frac{k+4}{n}}\big\Vert \mE_s\mE_{\frac{k+1}{n}}g(\hat X_{k_n(r)}^n)\big\Vert_{L^p_\omega} dr.
  \end{align}
  For simplicity let
  \begin{align*}
    Y_{u,v}(x):&={\int_{u}^{v}[\sigma(x)+\Sigma(x,W_\xi-W_{k_n(\xi)})]\dd W_\xi}.
    %\\&=\int_{u}^{v}[\sigma(x)+\Sigma(x,W_\xi-W_{u})]\dd W_\xi,\quad 0\leq u\leq v \leq1.
  \end{align*}
  When $u=k_n(v)$ one can rewrite this quantity as
  \begin{equ}
          Y_{k_n(v),v}(x)={\int_{k_n(v)}^{v}[\sigma(x)+\Sigma(x,W_\xi-W_{k_n(v)})]\dd W_\xi}.
  \end{equ}
  By It\^o's formula we have
  %\todo{cross term missing:fixed}
  \begin{align}\label{eq:re-g}
    g(x)=
\sum_{i,j=1}^{d}\mE\int_{k_n(r)}^{r}\big[\big(&\frac{1}{2}A^{ij}(x)+\frac{1}{2}B^{ij}
    (x,W_u-W_{k_n(r)})
    \nonumber\\&+C^{ij}(x,W_u-W_{k_n(r)})\big)\partial_{ij}f(x+Y_{k_n(r),u}(x))\big]\dd u.
  \end{align}
We aim to show for $\alpha\in(0,1)$
%for $\alpha\in(0,1)$ and $\beta\in(-1,0)$, $\epsilon_1\in(0,\epsilon)$,
  \begin{align}\label{est:regularity-alpha}
  \big|\mE_{\frac{k+1}{n}}g(\hat X_{k_n(r)}^n(y))\big|
  \lesssim \Vert f\Vert_{\mathcal{C}^\alpha}n^{-\frac{1+\alpha}{2}}(r-s)^{-\frac{1}{2}}.
  \end{align}
 We claim that it suffices to show \eqref{est:regularity-alpha} in the cases $\alpha=0,1$.
Indeed, the general case of \eqref{est:regularity-alpha} then follows via interpolation (i.e.  applying \cite[Theorem 1.6]{AL} and \cite[1.1.1. Example 1.8]{AL}).

First we treat the case $\alpha=0$.
 {
 Since $\hat X^n$ is a homogeneous Markov process on the grid $0,1/n,2/n,\ldots$,  and the random field $Y_{k_n(r),r}(\cdot)$ is independent from $\hat X^n_{k_n(r)}(y)$ and $Y_{k_n(r),r}\overset{\mathrm{law}}{=}Y_{k_n(r)-\frac{k+1}{n},r-\frac{k+1}{n}}$ by its definition,
 we can apply the Markov property;  together with the Newton-Leibniz formula we write, with denoting $k_n(r)^-:=k_n(r)-\frac{k+1}{n}, r^-:=r-\frac{k+1}{n}$}
 \begin{align}\label{eq:Markov}
     &\mE_{\frac{k+1}{n}}g(\hat X_{k_n(r)}^n(y))
    \nonumber\\&{= \mE\big(g_{r^-}^n(\hat X_{k_n(r)-\frac{k+1}{n}}^n(z))\big)\big|_{z=\hat X_{\frac{k+1}{n}}^n(y)}}\nonumber\\
     &{=\mE\Big[\int_0^1\nabla f\big(\hat X_{k_n(r)^-}^n(z)+\theta Y_{k_n(r)^-,r^-}(\hat X_{k_n(r)^-}^n(z))\big)\cdot Y_{k_n(r)^-,r^-}(\hat X_{k_n(r)^-}^n(z))\dd \theta\Big]\bigg|_{z=\hat X_{\frac{k+1}{n}}^n(y)}.}
 \end{align}

 \noindent
We want to get rid of $\nabla$ by Malliavin integration by parts.
More precisely, setting {$\hat X_{r^-}^{n,\theta}(z):=\hat X_{k_n(r)^-}^n(z)+\theta Y_{k_n(r)^-,r^-}(\hat X_{k_n(r)^-}^n(z)),$ %(\hat X_{k_n(r)}^n(y))$,
 we wish to use \eqref{pr:boundsg2} %conditionally on $\cF_{\frac{k+1}{n}}$,
with $k=1$, $q=2$, $\hat X^{n,\theta}(z)$ in place of $X$, and $Y_{k_n(r)^-,r^-}(\hat X_{k_n(r)^-}^n(z))$
%$Y_{k_n(r)^-,r^-}(\hat X_{k_n(r)}^n(y))$
in place of $Y$.  {In the following without confusion we shortly denote $\tilde{Y}%Y_{k_n(r)^-,r^-}
:=Y_{k_n(r)^-,r^-}(\hat X_{k_n(r)^-}^n(z))$}.
It is easy to verify that $ \hat X^{n,\theta}(z)\in \frD^{2}$: indeed, $D_s \hat X^{n,\theta}_{r^-}(z)=0$ for $s>r^-$ is obvious, while to bound $\cD(j,p)(\hat X^{n,\theta}_{r^-}(z))$, we can proceed as follows.}
For $s\in({k_n(r)^-,r^-}]$ we have
\begin{align}
&D_s{}\hat  X^{n,\theta}_{r^-}({z})\nonumber\\&=D_s{}\hat  X^{n}_{k_n(r)^-}({z})+\theta D_s{\tilde{Y}}%({\color{blue}z})
\nonumber\\&=\theta\big(\nabla\sigma\sigma(\hat  X^n_{k_n(r)^-}({z}))\chi(W_s-W_{k_n(r)^-})
+
\int_{s}^{r^-}\nabla\sigma\sigma(\hat  X^n_{k_n(r)^-}({z}))\chi'(W_\xi-W_{k_n(r)^-})\dd W_\xi\big),\nonumber
\end{align}
and for $s<{k_n(r)^-}$,
\begin{align}
D_s{}\hat  X^{n,\theta}_{r^-}({z})
=D_s\hat  X^n_{k_n(r)^-}+\theta\int^{r^-}_{k_n(r)^-}&\nabla\big(\sigma(\hat  X^n_{k_n(r)^-}({z}))\nonumber\\&+\nabla\sigma\sigma(  \hat  X^n_{k_n(r)^-}({z}))\chi(W_\xi-W_{k_n(r)^-})
\big)D_s\hat    X^n_{k_n(r)^-}({z})\dd W_\xi.\nonumber
\end{align}
From $\hat X^n\in\frD^2$, BDG  inequality, and $\sigma\in
\mathcal{C}^2$ we get $\cD(1,p)(\hat X^{n,\theta}_{r^-}({z}))\lesssim_p 1$. Bounding $\cD(2,p)(\hat X^{n,\theta}_{r^-}({z}))$ is very similar. This verifies $ \hat X^{n,\theta}({z})\in \frD^{2}$.
To verify \eqref{333}, we apply \cref{rem:est-theta}. Letting $u_\xi:=\sigma(\hat X^n_{k_n(\xi)})+\Sigma(\hat X^n_{k_n(\xi)},W_\xi-W_{k_n(\xi)})$, the conditions of \cref{rem:est-theta} are satisfied with $t_1=k_n(r)^-$. Therefore
the Malliavin matrix $\mathcal{M}^\theta_{r^-}$ of $\hat X^{n,\theta}_{r^-}({z})$ satisfies
\begin{align*}
 %\mE_{\frac{k+1}{n}}
\mE | \det \mathcal{M}_{r-}^\theta |^{-p} \leqslant C_p \big(k_n(r)-\frac{k+1}{n}+\theta^2(r-k_n(r))\big)^{-p d}\lesssim C_p(r-s)^{-pd}.
\end{align*}
Therefore
 $\hat X^{n,\theta}({z})$ fulfills \eqref{333}.
Finally, it remains to bound the $\|\cdot\|_{1,2}$ norm of ${\color{blue}\tilde{Y}}$.
We have
\begin{align*}
   D_s {\tilde{Y}}%Y_{\color{blue}k_n(r)^-,r^-}%({\color{blue}z})
    &=\mathds{1}_{s\in [k_n(r)^-,r^-]}\Big(\sigma(\hat X^n_{k_n(r)^-}({z}))+\Sigma(\hat X^n_{k_n(r)^-}({z}),W_\xi-W_{k_n(r)^-})\\&\qquad\qquad\qquad\qquad\qquad+\int_{s}^{r^-}\nabla\sigma\sigma(\hat X^n_{k_n(r)^-}({z}))\chi'(W_\xi-W_{k_n(r)^-})\dd W_\xi\Big)\\
    &\,\,+\mathds{1}_{s< k_n(r)^-}\int^{r^-}_{k_n(r)^-}\nabla [\sigma(\hat X^n_{k_n(r)^-}({z}))+\Sigma(\hat X^n_{k_n(r)^-}({z}),W_\xi-W_{k_n(r)^-})] D_s\hat X^n_{k_n(r)^-}({z})\dd W_\xi
\end{align*}
and
\begin{align*}
   \Vert {\tilde{Y}}%Y_{\color{blue}k_n(r)^-,r^-}
   \Vert_{1,2} &\lesssim
   \Vert {\tilde{Y}}%Y_{\color{blue}k_n(r)^-,r^-}
   \Vert_{L_\omega^2}+ \big\Vert \big(\int_0^1| D_s{\tilde{Y}}|^2\dd s\big)^{\frac{1}{2}}\big\Vert_{L^2_\omega}
   \\&\lesssim  \Vert{\tilde{Y}}\Vert_{L^2_\omega}+\big(\int_0^1\mathds{1}_{s\in [k_n(r)^-,r^-]}(s)\dd s\big)^{\frac{1}{2}}
 \\&\,\,\,+\big\Vert \big(\int^{r^-}_{k_n(r)^-}\big|\int_s^{r^-} \nabla\sigma\sigma(\hat X^n_{k_n(r)^-}({z}))\chi'(W_\xi-W_{k_n(r)^-})\dd W_\xi \big|^2\dd s\big)^{\frac{1}{2}}\big\Vert_{L^2_\omega}
 \\&\,\,\, +\Vert \hat X^n_{k_n(r)^-}({z})\Vert_{1,4}  \Big\Vert \int^{r^-}_{k_n(r)^-}\nabla [\sigma(\hat X_{k_n(r)}^n(y))+\Sigma(\hat X^n_{k_n(r)^-}({z}),W_\xi-W_{k_n(r)^-})] \dd W_\xi\Big\Vert_{L^4_\omega}
   \\ &\lesssim n^{-\frac{1}{2}}.
\end{align*}
Putting the above together, we apply \eqref{pr:boundsg2} to get { for any $z\in\mR^d$}
 \begin{align*}
 &{\mE\Big(\int_0^1\nabla f\big(\hat X_{k_n(r)^-}^n(z)+\theta Y_{k_n(r)^-,r^-}(\hat X_{k_n(r)^-}^n(z))\big)\cdot Y_{k_n(r)^-,r^-}(\hat X_{k_n(r)^-}^n(z))\dd \theta\Big)}
  \\&=\E\Big(\int_0^1 \nabla f\big(\hat X^n_{k_n(r)^-}(z)+\theta {\tilde{Y}}\big)\cdot {\tilde{Y}}\dd \theta \Big)
   \\& \lesssim (r-s)^{-\frac{1}{2}}\Vert f\Vert_{\mathcal{C}^0}\Vert {\tilde{Y}}\Vert_{1,2}
   \\&\lesssim (r-s)^{-\frac{1}{2}}\Vert f\Vert_{\mathcal{C}^0} n^{-\frac{1}{2}}.
 \end{align*}
So {after plugging the above into \eqref{eq:Markov}, we get }\eqref{est:regularity-alpha} holds with $\alpha=0$.\\

Next, we consider $\alpha=1$. By \eqref{eq:re-g} and   {  the Markov property similarly to \eqref{eq:Markov}}
\begin{align}\label{est:middle-g}
 &  \mE_{\frac{k+1}{n}}g(\hat X_{k_n(r)}^n(y)) \nonumber\\=&\sum_{i,j=1}^{d}\int_{k_n(r)}^{r}  \mE_{\frac{k+1}{n}}\big[\big(\frac{1}{2}A^{ij}(\hat X_{k_n(r)}^n(y))+\frac{1}{2}B^{ij}
    (\hat X_{k_n(r)}^n(y),W_u-W_{k_n(r)})
    \nonumber\\&\qquad\qquad+C^{ij}(\hat X_{k_n(r)}^n(y),W_u-W_{k_n(r)})\big)\partial_{ij}f\big(\hat X_{u}^n(y)\big)\big]\dd u
    \nonumber\\=&:\sum_{i,j=1}^{d}\int_{k_n(r)}^{r} \mE_{\frac{k+1}{n}}\big[\Gamma^{ij}(\hat X_{k_n(r)}^n(y),W_u-W_{k_n(r)})\partial_{ij}f\big(\hat X_{u}^n(y)\big)\big]\dd u
     \nonumber\\=&:{\sum_{i,j=1}^{d}\int_{k_n(r)}^{r}\mE\big[\Gamma^{ij}(\hat X_{k_n(r)^-}^n(z),W_{u^-}-W_{k_n(r)^-})\partial_{ij}f\big(\hat X_{u^-}^n(z)\big)\big]\big|_{z=\hat X_{\frac{k+1}{n}}^n(y)}}\dd u.
\end{align}
Again, we apply \eqref{pr:boundsg2}
 %conditionally on $\cF_{\frac{k+1}{n}}$,
 with $k=1$, $q=2$, $\hat X^n_{u^-}({z})$ in place of $X$, and $\Gamma^{ij}(\hat X^n_{k_n(r)^-}({z}),W_{\color{blue}u^-}-W_{k_n(r)^-})$ in place of $Y$.
The conditions on $X$ follow immediately from \eqref{bound:finite-derivative} and the nondegeneracy of the  stochastic integrand. It is also easy to check that
\begin{align*}
    \Vert & \Gamma^{ij}(\hat X^n_{k_n(r)^-}({z}),W_{u^-}-W_{k_n(r)^-})\Vert_{1,2}\lesssim 1.
\end{align*}
Therefore by \eqref{pr:boundsg2} we get {for any $z\in\mR^d$}
 \begin{align*}
 &{\mE\big[\Gamma^{ij}(\hat X_{k_n(r)^-}^n(z),W_{u^-}-W_{k_n(r)^-})\partial_{ij}f\big(\hat X_{u^-}^n(z)\big)\big]}
    %\mE_{\frac{k+1}{n}}&\big[\Gamma^{ij}(\hat X_{k_n(r)}^n(y),W_u-W_{k_n(r)})\partial_{ij}f\big(\hat X_{u}^n(y)\big)\big]\nonumber
    \\&\lesssim \Vert\Gamma^{ij}(\hat X^n_{k_n(r)^-}({z}),W_{u^-}-W_{k_n(r)^-})\Vert_{1,2} \Vert f\Vert_{\mathcal{C}^1}(u-\frac{k+1}{n})^{-\frac{1}{2}} \lesssim \Vert f\Vert_{\mathcal{C}^1}(r-s)^{-\frac{1}{2}}.
 \end{align*}
Using it in \eqref{est:middle-g} and integrating, we get
 \begin{align*}
    \mE_{\frac{k+1}{n}}g(\hat X_{k_n(r)}^n(y))\lesssim  \Vert f\Vert_{\mathcal{C}^1}(r-s)^{-\frac{1}{2}} n^{-1}.
 \end{align*}
This finishes the proof of \eqref{est:regularity-alpha} for $\alpha=1$, and therefore for all $\alpha\in[0,1]$.
Now we use \eqref{est:regularity-alpha} with the $\alpha$ of the theorem, in \eqref{eq:rewrite-g}. Using also the fact that $n^{-1}\lesssim|t-s|$, we get
  \begin{align}\label{est:S21-Ho}
  S_{2}\lesssim&\Vert f\Vert_{\mathcal{C}^{\alpha}}n^{-\frac{1+\alpha}{2}}\int^{t}_{\frac{k+4}{n}} (r-s)^{-\frac{1}{2}}dr
  \lesssim\Vert f\Vert_{\mathcal{C}^{\alpha}}n^{-\frac{1+\alpha}{2}}|t-s|^{\frac{1}{2}}\lesssim \Vert f\Vert_{\mathcal{C}^{\alpha}}n^{-\frac{1+\alpha}{2}+\epsilon}|t-s|^{\frac{1}{2}+\epsilon}.
 % \nonumber \\\leq &N\Vert f\Vert_{\mathcal{C}^{\alpha}}n^{-\frac{1+\alpha}{2}}|t-s|^{\frac{1}{2}}
  \end{align}
Combining  \eqref{eq:A}, \eqref{est:S1-Ho} %\eqref{eq:S2-Ho}, \eqref{est:S-22},
and \eqref{est:S21-Ho}, we get
  \begin{align}\label{est:case-II}
   \Vert   A_{s,t}\Vert_{L_\omega^p}\leq \Vert h\Vert_{\mB}\tilde A_{s,t}\leq N\Vert f\Vert_{\mathcal{C}^{\alpha}}\Vert h\Vert_{\mB}n^{-\frac{1+\alpha}{2}+\epsilon}|t-s|^{\frac{1}{2}+\epsilon}.
  \end{align}
We conclude that that \eqref{sewing-con-1} holds with
  $C_1=N\Vert h\Vert_{\mathcal{C}^{\alpha'}}\Vert f\Vert_{\mathcal{C}^{\alpha}}n^{-\frac{1+\alpha}{2}+\epsilon}$.

  Now we move on to verifying \eqref{sewing-con-2}. We have
  \begin{align*}
     \delta A_{s,u,t}= \Big(\mE_s\int_{u}^{t} h(\hat X_s) -\mE_u\int_{u}^{t}h(\hat X_u)\Big)\big[f(\hat X_r^n)-f(\hat
    X_{k_n(r)}^n)\big]\dd r,
  \end{align*}
and thus
  \begin{align*}
      \mE_s\delta A_{s,u,t}=\int_{u}^{t}\mE_s \Big(\big(h(\hat X_s) -h(\hat X_u)\big)\mE_u\big[f(\hat X_r^n)-f(\hat
    X_{k_n(r)}^n)\big]\Big)\dd r.
  \end{align*}
Using the preceding discussion, particularly the notation from \eqref{est:case-I} and the estimate \eqref{est:case-II}, we can write
  \begin{align*}
     \big\Vert\mE_s\delta A_{s,u,t}\big\Vert_{L^p_\omega}  &\lesssim\int_{u}^{t}\big\Vert  \big(h(\hat X_s) -h(\hat X_u)\big\Vert_{L_\omega^{2p}}\big\Vert\mE_u\big[f(\hat X_r^n)-f(\hat
    X_{k_n(r)}^n)\big]\big\Vert_{L^{2p}_\omega}\dd r
     \\&\lesssim |u-s|^{\frac{\alpha'}{2}}\Vert h\Vert_{\mathcal{C}^{\alpha'}} \int_{u}^{t}\big\Vert\mE_u\big[f(\hat X_r^n)-f(\hat
    X_{k_n(r)}^n)\big]\big\Vert_{L^{2p}_\omega}\dd r
     \\&\lesssim |u-s|^{\frac{\alpha'}{2}}\Vert h\Vert_{\mathcal{C}^{\alpha'}} \tilde A_{u,t}
      \\&\lesssim |u-s|^{\frac{\alpha'}{2}}\Vert h\Vert_{\mathcal{C}^{\alpha'}} \Vert f\Vert_{\mathcal{C}^\alpha} n^{-\frac{1+\alpha}{2}+\epsilon}|t-u|^{\frac{1}{2}+\epsilon}
       \\&\lesssim |t-s|^{1+\epsilon'}\Vert h\Vert_{\mathcal{C}^{\alpha'}} \Vert f\Vert_{\mathcal{C}^\alpha} n^{-\frac{1+\alpha}{2}+\epsilon}
  \end{align*}
  with $\epsilon':=\frac{\alpha'-1}{2}+\epsilon$, which is positive by assumption. Therefore \eqref{sewing-con-2} holds as well with $C_2=N\Vert h\Vert_{\mathcal{C}^{\alpha'}} \Vert f\Vert_{\mathcal{C}^\alpha} n^{-\frac{1+\alpha}{2}+\epsilon}$.

So all of the conditions in \cref{lemm:sewing-lemm}  are satisfied. Notice that
the process
\begin{align*}
    \hat{\mathcal{A}}_t:=\int_0^t h(\hat X^n_r)\big(f(\hat X^n_r)-f(\hat X^n_{k_n(r)})\big)\dd r
\end{align*}
is $\mathcal{F}_t$-adapted; moreover it clearly satisfies the following bounds
\begin{align*}
&    \big \Vert \hat{\mathcal{A}}_t-\hat{\mathcal{A}}_s-A_{s,t}\big\Vert_{L_\omega^{p}}=\big \Vert \hat{\mathcal{A}}_{s,t}-A_{s,t}\big\Vert_{L_\omega^{p}}\lesssim |t-s|,\\
   & \big \Vert \mE_s[\hat{\mathcal{A}}_t-\hat{\mathcal{A}}_s-A_{s,t}]\big\Vert_{L_\omega^{p}}=\big \Vert \mE_s[\hat{\mathcal{A}}_{s,t}-A_{s,t}]\big\Vert_{L_\omega^{p}}\lesssim |t-s|^{1+\frac{\alpha'}{2}}.
\end{align*}
Hence \cref{lemm:sewing-lemm} shows that $\hat{\mathcal{A}}=\mathcal{A}$ and the desired estimate \eqref{est:driftless} follows from \eqref{lem:sewing-est-3}.
\end{proof}
After applying Kolmogorov continuity theorem to \cref{lem:driftless} we simply get the following corollary.
\begin{corollary}\label{lemm:driftless-hf-Holder}
   Let $\alpha\in(0,1)$,  $\epsilon\in(0,\frac{1}{2})$, $\alpha'\in(1-2\epsilon,1), p\geq 2$.   Suppose  that $(H^\sigma)$ in \cref{ass:main} holds and that $ (\hat X_t^n(y))_{t\in[0,1]}$ is the solution to \eqref{eq:hatX}, $y\in\mathbb{R}^d$. Then for all  functions $h\in \mathcal{C}^{\alpha'}, f\in \mathcal{C}^\alpha$,  { for all }  $(s,t)\in[0,1]_\leq^2$, the following holds
 \begin{align}\label{est:driftless-hf}
   \Big\Vert \sup_{t\in[0,1]}\big|\int_0^t h( \hat X^n_r)\big(f( \hat X^n_r)-f(\hat X^n_{k_n(r)})\big)\dd r\big|\Big\Vert_{L^p_\omega}\leq N\Vert h\Vert_{\mathcal{C^{\alpha'}}}\Vert f\Vert_{\mathcal{C^\alpha}}n^{-\frac{\alpha+1}{2}+\epsilon}
 \end{align}
 with $N=N(p,d,d_1,\Vert\sigma\Vert_{\mathcal{C}^3},\alpha,\alpha',\lambda,\epsilon)$.
\end{corollary}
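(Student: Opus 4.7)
The plan is to upgrade the two-parameter increment bound from \cref{lem:driftless} into a bound on the supremum by a direct application of Kolmogorov's continuity theorem. Consider the $(\mathcal{F}_t)$-adapted process
\begin{equation*}
  \mathcal{A}_t := \int_0^t h(\hat X^n_r)\bigl(f(\hat X^n_r)-f(\hat X^n_{k_n(r)})\bigr)\dd r, \qquad t\in[0,1],
\end{equation*}
which satisfies $\mathcal{A}_0 = 0$ and, being the Lebesgue integral of a bounded integrand, has pathwise continuous trajectories; in particular no modification issues arise.

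First I would fix some exponent $\tilde p\geq p$ to be chosen and invoke \cref{lem:driftless} (which holds for every $\tilde p \geq 2$) to conclude that for all $(s,t)\in[0,1]_{\leq}^2$,
\begin{equation*}
  \Vert \mathcal{A}_t-\mathcal{A}_s\Vert_{L^{\tilde p}_\omega}
  \leq N\Vert h\Vert_{\mathcal{C}^{\alpha'}}\Vert f\Vert_{\mathcal{C}^\alpha}\,n^{-\frac{\alpha+1}{2}+\epsilon}|t-s|^{\frac{1}{2}+\epsilon},
\end{equation*}
with $N$ depending on $\tilde p,d,d_1,\Vert\sigma\Vert_{\mathcal{C}^3},\alpha,\alpha',\lambda,\epsilon$. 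Then I would select $\tilde p$ large enough that $\tilde p(\tfrac{1}{2}+\epsilon)>1$, which ensures the hypotheses of Kolmogorov's continuity theorem for a one-parameter process are satisfied.

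Applying Kolmogorov's continuity theorem, one obtains, for any $\beta<\tfrac{1}{2}+\epsilon-\tfrac{1}{\tilde p}$,
\begin{equation*}
  \Bigl\Vert \sup_{\substack{s,t\in[0,1]\\s\neq t}}\frac{|\mathcal{A}_t-\mathcal{A}_s|}{|t-s|^\beta}\Bigr\Vert_{L^{\tilde p}_\omega}
  \lesssim \Vert h\Vert_{\mathcal{C}^{\alpha'}}\Vert f\Vert_{\mathcal{C}^\alpha}\,n^{-\frac{\alpha+1}{2}+\epsilon}.
\end{equation*}
Specializing to $s=0$ and using $\mathcal{A}_0=0$ yields the same bound for $\Vert\sup_{t\in[0,1]}|\mathcal{A}_t|\Vert_{L^{\tilde p}_\omega}$. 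Finally, since $\tilde p\geq p$, Jensen's inequality downgrades this to an $L^p_\omega$ bound, which is precisely \eqref{est:driftless-hf}.

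There is no real technical obstacle here, as all the genuine work (stochastic sewing, Malliavin density estimates, the conditional moment bounds) has already been done in the proof of \cref{lem:driftless}; the remaining step is the standard Kolmogorov passage from pointwise-in-$t$ moment bounds to a moment bound on the pathwise supremum, with the only bookkeeping being the choice of $\tilde p$ in terms of $p$ and $\epsilon$.
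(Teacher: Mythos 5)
Your proposal is correct and takes essentially the same route as the paper, which itself states only that the corollary follows ``after applying Kolmogorov continuity theorem'' to \cref{lem:driftless}. One minor remark: since $p\geq 2$ and $\epsilon>0$, the hypothesis $p(\tfrac{1}{2}+\epsilon)>1$ already holds with $\tilde p=p$, so passing to a larger $\tilde p$ and invoking Jensen's inequality is harmless but unnecessary.
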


\subsection{Girsanov transform}
We add back the drift via a  Girsanov transform, first still in the truncated diffusion case. Therefore we use yet another auxiliary processes $(\tilde X_t^n(y))_{t\in[0,1]}=(\tilde X_t^n)_{t\in[0,1]}$, for $y\in\R^d$, defined by the following recursion
  \begin{align}\label{eq:tildeX}
    \tilde X_t^n=\tilde X_{k_n(t)}^n+\int_{k_n(t)}^{t}\sigma(\tilde X_{k_n(r)}^n)+\nabla\sigma\sigma(\tilde X_{k_n(t)}^n)\chi(W_r-W_{k_n(r)})\dd W_r+\int_{k_n(t)}^{t}b(\tilde X_{k_n(r)}^n)\dd r,
  \end{align}
 $\tilde X_{0}^n(y)=y$. Here $\chi:\mathbb{R}\mapsto\mR$ is defined as in \eqref{def:chi}, we again use the convention $\chi(x)=\chi(x_i)_{1\leq i\leq d_1}$ for $x\in\mR^{d_1}$, so $\chi(W_r-W_{k_n(r)}):=(\chi(W_r^i-W_{k_n(r)}^i))_{1\leq i\leq d_1}$.
\begin{corollary}
  \label{lem:drift-conditional-est}
  Let $\alpha\in(0,1)$,  $\epsilon\in(0,\frac{1}{2})$, $\alpha'\in(1-2\epsilon,1), p\geq 2$.   Suppose   \cref{ass:main} holds and that $ ( \tilde X_t^n(y))_{t\in[0,1]}$ is the solution to \eqref{eq:tildeX}, $y\in\mathbb{R}^d$. Then for all  functions $h\in \mathcal{C}^{\alpha'}, f\in \mathcal{C}^\alpha$, $(s,t)\in[0,1]_\leq^2$, the following holds
 \begin{align*}
   \Big\Vert \sup_{t\in[0,1]}\big|\int_0^t h( \tilde X^n_r)\big(f( \tilde X^n_r)-f(\tilde X^n_{k_n(r)})\big)\dd r\big|\Big\Vert_{L^p_\omega}\leq N\Vert h\Vert_{\mathcal{C^{\alpha'}}}\Vert f\Vert_{\mathcal{C^\alpha}}n^{-\frac{\alpha+1}{2}+\epsilon}
 \end{align*}
 with $N=N(p,d,d_1,\Vert\sigma\Vert_{\mathcal{C}^{3}},\alpha,\alpha',\Vert b\Vert_{\mathcal{C}^\alpha},\lambda,\epsilon)$.
 %\todo{$\sigma$ norm should be $C^2$ or $C^3$ I guess}
\end{corollary}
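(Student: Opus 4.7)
The plan is to remove the drift $b$ from $\tilde X^n$ via Girsanov's theorem and thereby reduce to Corollary~\ref{lemm:driftless-hf-Holder}. Because of the uniform ellipticity \eqref{con:elliptic-diffusion} of $\Sigma_r\Sigma_r^*$, where
$\Sigma_r:=\sigma(\tilde X^n_{k_n(r)})+(\nabla\sigma\sigma)(\tilde X^n_{k_n(r)})\chi(W_r-W_{k_n(r)})$, and the boundedness of $b$ implied by $b\in\mathcal C^\alpha$, the adapted process
$\psi_r:=\Sigma_r^*(\Sigma_r\Sigma_r^*)^{-1}b(\tilde X^n_{k_n(r)})$
is uniformly bounded, and satisfies $\Sigma_r\psi_r=b(\tilde X^n_{k_n(r)})$. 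I would form the Dol\'eans--Dade exponential
$\mathcal E_1:=\exp\bigl(-\int_0^1\psi_s\,dW_s-\tfrac12\int_0^1|\psi_s|^2\,ds\bigr)$
and define the equivalent probability measure $\mathbb Q$ by $d\mathbb Q/d\mathbb P|_{\mathcal F_1}=\mathcal E_1$. Boundedness of $\psi$ ensures, trivially, that $\mathcal E$ and $\mathcal E^{-1}$ have all moments uniformly in $n$, with constants depending only on $\|b\|_{\mathcal C^\alpha}$, $\|\sigma\|_{\mathcal C^1}$, $\lambda$.

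Under $\mathbb Q$, $\tilde W_r:=W_r+\int_0^r\psi_s\,ds$ is a standard Brownian motion, and substituting $dW_r=d\tilde W_r-\psi_r\,dr$ into~\eqref{eq:tildeX} gives, after the drift cancels, $d\tilde X^n_r=\Sigma_r\,d\tilde W_r$ under $\mathbb Q$. This is precisely the driftless truncated Milstein SDE \eqref{eq:hatX} written with respect to $\tilde W$, up to the cosmetic point that the argument of $\chi$ is $W_r-W_{k_n(r)}=\tilde W_r-\tilde W_{k_n(r)}-\int_{k_n(r)}^r\psi_s\,ds$, which differs from $\tilde W_r-\tilde W_{k_n(r)}$ by a bounded, $\mathcal F_{k_n(r)}$-measurable quantity of size $O(n^{-1})$.

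To pass the estimate of Corollary~\ref{lemm:driftless-hf-Holder} back to $\mathbb P$, I would use a Cauchy--Schwarz-type exchange of measures
$\|F\|_{L^p(\mathbb P)}\le\|F\|_{L^{2p}(\mathbb Q)}\,\|\mathcal E^{-1}\|_{L^2(\mathbb Q)}^{1/p}$
applied to $F:=\sup_{t\in[0,1]}\bigl|\int_0^t h(\tilde X^n_r)\bigl(f(\tilde X^n_r)-f(\tilde X^n_{k_n(r)})\bigr)\,dr\bigr|$, and then re-run the stochastic-sewing argument of Lemma~\ref{lem:driftless} verbatim under $\mathbb Q$ with $\tilde X^n$ playing the role of $\hat X^n$. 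The only input that actually needs re-checking is the density estimate~\eqref{expect-Mallianvin-hat0-infty} for $\tilde X^n$ under $\mathbb Q$; since under $\mathbb Q$ the process $\tilde X^n$ solves a driftless SDE with diffusion coefficient still satisfying the uniform ellipticity \eqref{con:elliptic-diffusion}, Theorems~\ref{lbl:thm1} and~\ref{pr:thm1} apply along the same lines as in Lemma~\ref{lemma-density-descrete}.

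The main obstacle I anticipate is verifying that the $O(n^{-1})$ mismatch in the argument of $\chi$, together with the $W$-dependence of $\psi_r$, does not spoil either the uniform bounds $\cD(j,p)(\tilde X^n_t)\lesssim 1$ or the Malliavin-matrix nondegeneracy required by \cref{pr:thm1}. Both reduce to routine computations once one uses that $\chi,\chi',\chi''$ are bounded, that the shift $\int_{k_n(r)}^r\psi_s\,ds$ is $\mathcal F_{k_n(r)}$-measurable and of order $n^{-1}$, and that $\psi$ inherits its regularity from $\tilde X^n_{k_n(r)}$ alone. Once this bookkeeping is settled, the rate $n^{-(1+\alpha)/2+\epsilon}$ transfers unchanged from $\hat X^n$ to $\tilde X^n$, with the Girsanov moments absorbed into the constant $N$.
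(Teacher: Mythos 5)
Your proposal uses the same basic tool as the paper (a Girsanov change of measure), but applies it in the opposite direction, and this matters. The paper works with the driftless $\hat X^n$ under the \emph{original} measure $\mathbb P$, where \cref{lemm:driftless-hf-Holder} applies verbatim; it then introduces the Girsanov density $\rho$ in terms of $\hat X^n$ so that $\hat X^n$ under $\rho\,d\mathbb P$ has the law of $\tilde X^n$ under $\mathbb P$, and concludes with a single Cauchy--Schwarz step $\mE\,\chh(\tilde X^n)^p=\mE(\rho\,\chh(\hat X^n)^p)\le[\mE\,\chh(\hat X^n)^{2p}]^{1/2}[\mE\rho^2]^{1/2}$. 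In that setup the only object to which the sewing/Malliavin machinery is ever applied is $\hat X^n$ under $\mathbb P$, which is exactly the process treated in \cref{sec:inte-est-Mil}. Your reverse direction (define $\mathbb Q$ so that $\tilde X^n$ becomes driftless under $\mathbb Q$) needs the driftless estimate to hold for $\tilde X^n$ \emph{under $\mathbb Q$}, and, as you yourself note, $\tilde X^n$ under $\mathbb Q$ does \emph{not} solve \eqref{eq:hatX} with respect to the $\mathbb Q$-Brownian motion $\tilde W$: the argument of $\chi$ is $W_r-W_{k_n(r)}=\tilde W_r-\tilde W_{k_n(r)}-\int_{k_n(r)}^r\psi_s\,ds$. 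Consequently \cref{lemm:driftless-hf-Holder} cannot be invoked directly, and the whole chain \cref{lemma-density-descrete}--\cref{lem:driftless}--\cref{lemm:driftless-hf-Holder} would have to be re-established for the perturbed SDE. You flag this as "the main obstacle" and assert it reduces to routine bookkeeping, but you do not carry it out, so this is a genuine gap in the proposal as written; the paper's direction is designed precisely to avoid re-proving those lemmas.

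A smaller but real error: you claim the shift $\int_{k_n(r)}^r\psi_s\,ds$ is ``$\mathcal F_{k_n(r)}$-measurable.'' It is not. Since $\psi_s=\Sigma_s^*(\Sigma_s\Sigma_s^*)^{-1}b(\tilde X^n_{k_n(s)})$ and $\Sigma_s$ involves $\chi(W_s-W_{k_n(s)})$, the integrand depends on the Brownian increment over $(k_n(s),s]$; the shift is only $\mathcal F_r$-measurable and has nontrivial Malliavin derivatives over the current interval. This does not doom your approach—the shift is still bounded by $O(n^{-1})$ with bounded Malliavin derivatives, so the nondegeneracy and $\cD(j,p)$ bounds should survive—but it invalidates the specific reason you give for why the modification is harmless, and it means the Malliavin computations for $\tilde X^n$ under $\mathbb Q$ are not literally a copy of those in \cref{lemma-density-descrete}. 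If you want to salvage the reverse direction, you need to redo those computations with the $\mathcal F_r$-adapted (not $\mathcal F_{k_n(r)}$-measurable) correction explicitly tracked; otherwise, switch to the paper's forward direction and apply Girsanov only as a change-of-measure weight on the already-proved estimate for $\hat X^n$.
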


\begin{proof}
  	For any  continuous process $Z$, we define
	\begin{align}\label{eq:cH}
	   \chh(Z)=\sup_{t\in[0,1]}\Big|\int_0^th(r,Z_r) [f(r,Z_r)-f(r,Z_{k_n(r)})]\dd r\Big|.
	\end{align}
 From \cref{lemm:driftless-hf-Holder} we have $\|\chh(\hat X^n)\|_{L^m_\omega}\lesssim_m \Vert h\Vert_{\mathcal{C^{\alpha'}}}\Vert f\Vert_{\mathcal{C^\alpha}}n^{-\frac{\alpha+1}{2}+\epsilon}$ for any $m<\infty$.
	Let $B_{k_n(r),r}( \cdot):=\nabla\sigma\sigma( \cdot)\chi(W_r-W_{k_n(r)})$. Define
	\begin{align*}
	    \rho:=\exp\Big(-\int_0^1((\sigma+&B_{k_n(r),r})^{-1}b)(\hat X_{k_n(r)}^n)\dd W_r
     \\&-\frac{1}{2}\int_0^1\Big|((\sigma+B_{k_n(r),r})^{-1}b)(\hat X_{k_n(r)}^n)\Big|^2\dd r\Big).
	\end{align*}
	By construction of $\chi$,  $(\sigma+B_{k_n(r),r})^{-1}b$ is bounded, and therefore { $\rho$ is an integrable random variable with expectation
equal to $1$.  } % $\rho$ is a probability density.
   Moreover, $\|\rho\|_{L^m_\omega}\lesssim_m 1$ for any $m<\infty$.
	 It follows from Girsanov theorem that under the measure $\rho d\mathbb{P}$, $\hat X^n$ is distributed the same as $\tilde X^n$ under $\mathbb{P}$, therefore by H\"older's inequality we can write
	\begin{align*}
	    \mE \chh(\tilde X^n)^{p}=\mE (\rho\chh(\hat X^n)^{ p})\leq \big[ \mE\chh(\hat  X^n)^{2 p}\big]^{1/2}\big[\mE\rho^{2}\big] ^{1/2}\lesssim \big(\Vert h\Vert_{\mathcal{C^{\alpha'}}}\Vert f\Vert_{\mathcal{C^\alpha}}n^{-\frac{\alpha+1}{2}+\epsilon}\big)^p.
	\end{align*}
\end{proof}

\begin{corollary}
  \label{coro-estimate-X}
  Let $\alpha\in(0,1)$,  $\epsilon\in(0,\frac{1}{2})$, $\alpha'\in(1-2\epsilon,1), p\geq 2$.   Suppose   \cref{ass:main} holds and that $ ( X_t^n(y))_{t\in[0,1]}$ is the solution to \eqref{eq:Milstein-scheme-SDE}, $x_0^n\in\mathbb{R}^d$. Then for all  functions $h\in \mathcal{C}^{\alpha'}, f\in \mathcal{C}^\alpha$, $(s,t)\in[0,1]_\leq^2$, the following holds
 \begin{align}\label{est:drift-hf-conditional}
   \Big\Vert \sup_{t\in[0,1]}\big|\int_0^t h(  X^n_r)\big(f( X^n_r)-f( X^n_{k_n(r)})\big)\dd r\big|\Big\Vert_{L^p_\omega}\leq N\Vert h\Vert_{\mathcal{C^{\alpha'}}}\Vert f\Vert_{\mathcal{C^\alpha}}n^{-\frac{\alpha+1}{2}+\epsilon}
 \end{align}
 with $N=N(p,d,d_1,\Vert\sigma\Vert_{\mathcal{C}^{3}},\alpha,\alpha',\Vert b\Vert_{\mathcal{C}^\alpha},\lambda,\epsilon)$.
\end{corollary}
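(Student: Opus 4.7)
The plan is to leverage the previous corollary (\cref{lem:drift-conditional-est}) together with the observation that the untruncated Milstein scheme $X^n$ coincides with the truncated one $\tilde X^n$ on the high-probability event $\hat\Omega$ introduced in \eqref{def:hat-Omega}. Specifically, since by construction of $\chi$ in \eqref{def:chi} one has $\chi(x)=x$ whenever $|x|\leq \kappa/2$, on the event $\hat\Omega$ the defining recursion \eqref{eq:tildeX} for $\tilde X^n$ reduces exactly to the Milstein recursion \eqref{eq:Milstein-scheme-SDE}, so by pathwise uniqueness $\tilde X^n_t=X^n_t$ for all $t\in[0,1]$ on $\hat\Omega$.

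Writing, as in the proof of \cref{lem:drift-conditional-est},
\begin{equation*}
\chh(Z):=\sup_{t\in[0,1]}\Big|\int_0^t h(Z_r)\bigl(f(Z_r)-f(Z_{k_n(r)})\bigr)\dd r\Big|,
\end{equation*}
I would decompose
\begin{equation*}
\|\chh(X^n)\|_{L^p_\omega}\leq \|\chh(X^n)\mathds{1}_{\hat\Omega}\|_{L^p_\omega}+\|\chh(X^n)\mathds{1}_{\hat\Omega^c}\|_{L^p_\omega}.
\end{equation*}
On $\hat\Omega$ we have $\chh(X^n)=\chh(\tilde X^n)$, so the first term is dominated by $\|\chh(\tilde X^n)\|_{L^p_\omega}$, which by \cref{lem:drift-conditional-est} is at most $N\|h\|_{\mathcal{C}^{\alpha'}}\|f\|_{\mathcal{C}^\alpha}n^{-(1+\alpha)/2+\epsilon}$.

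For the second term I would use the trivial pointwise bound $\chh(X^n)\leq 2\|h\|_{\infty}\|f\|_{\infty}\leq 2\|h\|_{\mathcal{C}^{\alpha'}}\|f\|_{\mathcal{C}^\alpha}$ combined with H\"older's inequality and the tail bound \eqref{def:pro-event}, giving
\begin{equation*}
\|\chh(X^n)\mathds{1}_{\hat\Omega^c}\|_{L^p_\omega}\leq 2\|h\|_{\mathcal{C}^{\alpha'}}\|f\|_{\mathcal{C}^\alpha}\,\mP(\hat\Omega^c)^{1/p}\leq 2\|h\|_{\mathcal{C}^{\alpha'}}\|f\|_{\mathcal{C}^\alpha}(Ne^{-cn})^{1/p}.
\end{equation*}
Since $e^{-cn/p}$ decays faster than any polynomial in $n$, this residual term is absorbed into $n^{-(1+\alpha)/2+\epsilon}$ at the cost of enlarging the constant $N$, yielding \eqref{est:drift-hf-conditional}.

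There is essentially no obstacle here: all the genuine work (the sewing-based estimate on the driftless truncated scheme $\hat X^n$ in \cref{lem:driftless} and the Girsanov step in \cref{lem:drift-conditional-est}) has already been carried out; the present corollary is a soft removal of the truncation. The only small point to verify carefully is that $\tilde X^n=X^n$ path-by-path on $\hat\Omega$, which follows directly from comparing \eqref{eq:tildeX} with \eqref{eq:Milstein-scheme-SDE} and the choice $\kappa=\lambda/(4Kd^2)$ in \eqref{def:chi}.
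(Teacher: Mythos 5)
Your proof is correct and follows essentially the same route as the paper: both rely on the identity $X^n=\tilde X^n$ on the high-probability event $\hat\Omega$, invoke Corollary \ref{lem:drift-conditional-est} for the main term, and absorb the contribution of $\hat\Omega^c$ using the exponential tail bound \eqref{def:pro-event} and the boundedness of $h$ and $f$. The only cosmetic difference is that the paper decomposes $\chh(X^n)=\chh(\tilde X^n)+\mathds{1}_{\hat\Omega^c}(\chh(X^n)-\chh(\tilde X^n))$ while you split $\chh(X^n)=\chh(X^n)\mathds{1}_{\hat\Omega}+\chh(X^n)\mathds{1}_{\hat\Omega^c}$; these are interchangeable, and your version even records the $\mP(\hat\Omega^c)^{1/p}$ exponent explicitly, which the paper elides.
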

%%%%%%%%%%%%%%%%%%%%%%%%%%%%%%%%%%%%%%%%%%%%%%%%%%%%%%%%%%%%%%
\begin{proof}
   Recall that $( X_t^n)_{t\in[0,1]}$ coincides with $(\tilde X_t^n)_{t\in[0,1]}$ on $\hat \Omega$   and $\mP(\hat\Omega^c)\lesssim e^{-cn}$. Using the notation from \eqref{eq:cH} and applying \cref{lem:drift-conditional-est}, we can write
\begin{equs}
    \|\chh(X^n)\|_{L^p_\omega}&\leq\|\chh(\tilde X^n)\|_{L^p_\omega}+\big\|\mathds{1}_{\hat{\Omega}^c}\big(\chh(X^n)-\chh(\tilde X^n)\big)\big\|
    \\
    &\lesssim \Vert h\Vert_{\mathcal{C^{\alpha'}}}\Vert f\Vert_{\mathcal{C^\alpha}}n^{-\frac{\alpha+1}{2}+\epsilon}+\Vert f\Vert_{\mB}\Vert h\Vert_{\mB}\mP(\hat\Omega^c).
\end{equs}
This implies \eqref{est:drift-hf-conditional}.
\end{proof}

\section{Proof of the main result}\label{sec:main-proof}
The final ingredient of the proof is (an appropriate form) of the Zvonkin transformation, for which we need some regularity result for the PDE associated to \eqref{eq:SDE}.

\begin{assumption}
  \label{ass-a}
Let $a(x):\mR^d\mapsto\mR^{d\times d}$. Assume
\begin{itemize}
  \item[i).]there exists a constant $\lambda>0$ such that \eqref{def-uni-ellip} holds {with $A = a$},
  \item[ii).]$(a_{ij})_{1\leq i,j\leq d}$ is uniformly continuous with modulus of continuity $h$, in the sense that
      \begin{align}\label{def:uni-con}
      |a(x)-a(y)|\leq h(|x-y|) \quad \forall (x,y)\in  \R^{2d}.
      \end{align}
\end{itemize}
\end{assumption} For some parameter $\theta> 0$ we consider the elliptic equation
\begin{align}
    \label{PDE}
   \sum_{i,j=1}^da^{ij}\partial_{ij}u+\nabla u\cdot b-\theta u=f.
\end{align}
We then have the following results on solutions of \eqref{PDE}.
While such statements are fairly standard, in these particular forms we have not found an exact reference, so for the sake of completeness short proofs are provided in \cref{App:PDE}.

\begin{theorem}\label{thm:pde-Holder}
 Let $f,b\in\mathcal{C}^{\alpha}$ with {$\alpha\in(0,1)$}.  Suppose \cref{ass-a} holds with taking $h(x)=x^\alpha$ for $x\in\mR_+$.  There exists $\theta^*>0$ depending on $\alpha,\lambda$, $d$ and $\|b\|_{\mathcal{C}^{\alpha}}$ such that for $\theta\geq \theta^*$,  there exists a unique solution $u\in\mathcal{C}^{2+\alpha}$ to \eqref{PDE}. { Moreover,} for any $\gamma\in[\alpha,\alpha+2)$, there exists a constant $C$ depending on $\alpha$, $\lambda$, $d$, $\|b\|_{\mathcal{C}^{\alpha}}$ and $\gamma$, independent of $\theta$, such that
\begin{align}\label{est:pde-Holder}
  \Vert u\Vert_{\mathcal{C}^{\gamma}}\leq C\theta^{\frac{\gamma-(2+\alpha)}{2}}\|f\|_{\mathcal{C}^{\alpha}}.
\end{align}
\end{theorem}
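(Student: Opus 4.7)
\textbf{The plan} is threefold: first establish the sharp resolvent estimate \eqref{est:pde-Holder} in the driftless case $b=0$, then treat the drift as a perturbation that is absorbed for $\theta$ large, and finally obtain existence by the method of continuity, with uniqueness coming for free from the maximum principle.

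\textbf{A priori estimate, driftless case.} For a smooth solution $u$ to $\sum_{i,j}a^{ij}\partial_{ij}u-\theta u = f$ on $\R^d$, I would introduce the dilation $v(x):=u(x/\sqrt\theta)$. A direct calculation shows that $v$ satisfies $\tilde a^{ij}\partial_{ij}v - v = g$, where $\tilde a(x):=a(x/\sqrt\theta)$ inherits the ellipticity constant $\lambda$ and has $[\tilde a]_{\mathcal{C}^\alpha} = \theta^{-\alpha/2}[a]_{\mathcal{C}^\alpha}$ (hence uniformly bounded in $\theta\ge 1$), and $g(x):=\theta^{-1}f(x/\sqrt\theta)$ satisfies
\begin{align*}
\|g\|_{\mathcal{C}^\alpha} \leq \theta^{-1}\|f\|_{L^\infty} + \theta^{-1-\alpha/2}[f]_{\mathcal{C}^\alpha}.
\end{align*}
The classical $\R^d$-Schauder estimate for the unit-resolvent equation with Hölder coefficients yields $\|v\|_{\mathcal{C}^{2+\alpha}}\lesssim\|g\|_{\mathcal{C}^\alpha}$, where the constant depends only on $\alpha,\lambda,d$ and $\|a\|_{\mathcal{C}^\alpha}$. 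Translating back through $\nabla^k u(y)=\theta^{k/2}\nabla^k v(\sqrt\theta\,y)$ and $[\nabla^k u]_{\mathcal{C}^\beta}=\theta^{(k+\beta)/2}[\nabla^k v]_{\mathcal{C}^\beta}$, combined with interpolation to cover intermediate $\gamma$, produces \eqref{est:pde-Holder} for every $\gamma\in[\alpha,2+\alpha)$ in the driftless case.

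\textbf{Drift bootstrap, existence, uniqueness.} For general $b\in\mathcal{C}^\alpha$, move the drift to the right hand side and apply the driftless estimate at the specific exponent $\gamma=1+\alpha$, so that
\begin{align*}
\|u\|_{\mathcal{C}^{1+\alpha}} \leq C\theta^{-1/2}\bigl(\|f\|_{\mathcal{C}^\alpha}+\|b\|_{\mathcal{C}^\alpha}\|u\|_{\mathcal{C}^{1+\alpha}}\bigr).
\end{align*}
Choosing $\theta^*:=(2C\|b\|_{\mathcal{C}^\alpha})^2$ absorbs the second term on the left for $\theta\ge\theta^*$, giving $\|u\|_{\mathcal{C}^{1+\alpha}}\lesssim\theta^{-1/2}\|f\|_{\mathcal{C}^\alpha}$; reinserting this bound into the driftless estimate for arbitrary $\gamma\in[\alpha,2+\alpha)$ closes the a priori estimate \eqref{est:pde-Holder}. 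Uniqueness follows from the maximum principle applied to the difference of two $\mathcal{C}^{2+\alpha}$ solutions. For existence, I would use the method of continuity along the family $(1-t)(\Delta-\theta)+t(L-\theta)$, $t\in[0,1]$, with $L=\sum a^{ij}\partial_{ij}+b\cdot\nabla$: solvability at $t=0$ is classical (Fourier analysis for the constant-coefficient resolvent), the a priori estimate above is uniform in $t$ with coefficients in the prescribed Hölder class, and the method of continuity then propagates solvability to $t=1$.

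\textbf{Main obstacle.} The delicate step is the drift bootstrap: it closes precisely because the exponent $(\gamma-2-\alpha)/2$ equals the strictly negative value $-1/2$ at $\gamma=1+\alpha$, so that the prefactor $C\theta^{-1/2}\|b\|_{\mathcal{C}^\alpha}$ can be made smaller than $1/2$ by taking $\theta$ large. This is why the sharp tracking of the $\theta$-power through the scaling is indispensable, why $\theta^*$ must be allowed to depend on $\|b\|_{\mathcal{C}^\alpha}$, and why $\gamma=2+\alpha$ (where the exponent degenerates to $0$) must be excluded from the statement.
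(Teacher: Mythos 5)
Your proposal takes a genuinely different route from the paper: you reduce to a unit-resolvent problem by dilation and invoke a crude Schauder bound, whereas the paper works directly with the semigroup representation $u=\int_0^\infty e^{-\theta t}P_t f\,\mathrm{d}t$ and reads off the $\theta$-decay from the estimate $\|P_t f\|_{\mathcal{C}^\gamma}\lesssim t^{-(\gamma-\alpha)/2}\|f\|_{\mathcal{C}^\alpha}$. The difference is not just cosmetic: as written, your argument does \emph{not} produce the claimed power of $\theta$.

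Here is the gap. You pass from $\|v\|_{\mathcal{C}^{2+\alpha}}\lesssim\|g\|_{\mathcal{C}^\alpha}$ to the estimate for $u$ by the scaling identities $[\nabla^k u]_{\mathcal{C}^\beta}=\theta^{(k+\beta)/2}[\nabla^k v]_{\mathcal{C}^\beta}$. But $\|g\|_{\mathcal{C}^\alpha}=\theta^{-1}\|f\|_{L^\infty}+\theta^{-1-\alpha/2}[f]_{\mathcal{C}^\alpha}$ is dominated by its $L^\infty$ part, so $\|g\|_{\mathcal{C}^\alpha}\approx\theta^{-1}\|f\|_{\mathcal{C}^\alpha}$, and the crude bound gives $[\nabla^k v]_{\mathcal{C}^\beta}\leq\|v\|_{\mathcal{C}^{2+\alpha}}\lesssim\theta^{-1}\|f\|_{\mathcal{C}^\alpha}$ for \emph{every} component. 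After rescaling this yields at best $\|u\|_{\mathcal{C}^\gamma}\lesssim\theta^{\gamma/2-1}\|f\|_{\mathcal{C}^\alpha}$, which is off from the claimed $\theta^{(\gamma-(2+\alpha))/2}$ by a factor $\theta^{\alpha/2}$ across the whole range. Interpolating between the $L^\infty$ bound $\|u\|_{L^\infty}\leq\theta^{-1}\|f\|_{L^\infty}$ and the $\mathcal{C}^{2+\alpha}$ bound does not fix this either — it still yields exponent $\gamma/2-1$. The extra $\theta^{-\alpha/2}$ is exactly the gain that comes from the $\alpha$-H\"older regularity of $f$ (the statement is sharp already for constant $a$), and the full-norm-to-full-norm Schauder bound simply cannot see it: the $L^\infty$ component of $g$ scales worse than its seminorm and swamps the estimate. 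The paper sees the gain because $P_t$ applied to an $\alpha$-H\"older function has blow-up $t^{-(\gamma-\alpha)/2}$ rather than $t^{-\gamma/2}$.

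To salvage the scaling argument you would need, in addition to $\|v\|_{\mathcal{C}^{2+\alpha}}\lesssim\|g\|_{\mathcal{C}^\alpha}$, the \emph{seminorm-to-seminorm} resolvent bound $[v]_{\mathcal{C}^\alpha}\lesssim[g]_{\mathcal{C}^\alpha}$ (for constant coefficients this follows from $[P_t g]_{\mathcal{C}^\alpha}\leq[g]_{\mathcal{C}^\alpha}$; for variable $\tilde a$ it needs to be extracted from the freezing argument), which after rescaling gives $[u]_{\mathcal{C}^\alpha}\leq\theta^{-1}[f]_{\mathcal{C}^\alpha}$ and hence $\|u\|_{\mathcal{C}^\alpha}\lesssim\theta^{-1}\|f\|_{\mathcal{C}^\alpha}$. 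Interpolating on the scale $[\gamma=\alpha,\gamma=2+\alpha]$ (not $[\gamma=0,\gamma=2+\alpha]$) between $\theta^{-1}$ and $\theta^{0}$ then does reproduce $\theta^{(\gamma-(2+\alpha))/2}$. The drift bootstrap, uniqueness and existence parts of your argument are fine, and the weaker exponent $\theta^{\gamma/2-1}$ would in fact still suffice for the downstream use in \cref{sec:main-proof} (one only needs some negative power at $\gamma=1$); but as a proof of \eqref{est:pde-Holder} as stated, the scaling step needs the sharper seminorm input.
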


In addition, let us recall two elementary properties of the approximation $X^n$  defined by \eqref{eq:Milstein-scheme-SDE}. For proofs, see \cite[Section 7.8.8]{Pages}.
\begin{proposition}\label{prop:kindof-trivial}
Assume that $b\in \mathcal{C}^0$ and $\sigma\in \mathcal{C}^1$. Then for any $m\in[1,\infty)$ there exists a constant $N=N(m, d,d_1,\|b\|_{\mathcal{C}^0},\|\sigma\|_{\mathcal{C}^1})$ such that for all $n\in\mathbb{N}$, $0\leq s\leq t\leq 1$, $f\in\mathcal{C}^2$ one has
\begin{equ}\label{eq:trivi-1}
\big\|X^n_t-X^n_s\big\|_{L^m_\omega}\leq N(t-s)^{1/2}
\end{equ}
and
\begin{equ}\label{eq:trivi-2}
\big\|f(X^n_t)-f(X^n_{\kappa_n(t)})-[\nabla f\sigma](X^n_{\kappa_n(t)})(W_t-W_{\kappa_n(t)})\big\|_{L^m_\omega}\leq N \|f\|_{\mathcal{C}^2}n^{-1}.
\end{equ}
\end{proposition}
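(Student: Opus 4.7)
The plan is to prove the two estimates separately, both by direct computation from the recursive definition \eqref{eq:Milstein-scheme-SDE}.

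For \eqref{eq:trivi-1}, I would decompose $X^n_t - X^n_s$ into its drift and martingale contributions from the scheme. The drift part is pointwise bounded by $\|b\|_{\mathcal{C}^0}(t-s)\leq\|b\|_{\mathcal{C}^0}(t-s)^{1/2}$ (using $t-s\leq 1$). For the stochastic integral, apply the Burkholder-Davis-Gundy inequality together with Minkowski's inequality (for $m\geq 2$; handle $m<2$ by Jensen) to obtain
\begin{equation*}
\Big\|\int_s^t\big[\sigma(X^n_{\kappa_n(r)})+\nabla\sigma\sigma(X^n_{\kappa_n(r)})(W_r-W_{\kappa_n(r)})\big]\dd W_r\Big\|_{L^m_\omega}\lesssim\Big(\int_s^t\big(\|\sigma\|^2_{\mathcal{C}^0}+\|\nabla\sigma\sigma\|^2_{\mathcal{C}^0}\|W_r-W_{\kappa_n(r)}\|_{L^m_\omega}^2\big)\dd r\Big)^{1/2}.
\end{equation*}
The moment bound $\|W_r-W_{\kappa_n(r)}\|_{L^m_\omega}\lesssim n^{-1/2}$ shows both integrands are uniformly bounded in $r$, so the whole term is $\lesssim(t-s)^{1/2}$.

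For \eqref{eq:trivi-2}, set $s_0:=\kappa_n(t)$. On $[s_0,t]$ the freezing point $\kappa_n(\cdot)$ is constant, so the scheme behaves as a genuine It\^o process with coefficients frozen at $X^n_{s_0}$. Apply It\^o's formula to $r\mapsto f(X^n_r)$ on this interval and obtain three contributions: a drift integral with integrand $\nabla f(X^n_r)\cdot b(X^n_{s_0})$; a quadratic-variation integral with integrand controlled by $\|\nabla^2 f\|_{\mathcal{C}^0}\|\sigma_r\sigma_r^*\|$ where $\sigma_r:=\sigma(X^n_{s_0})+\nabla\sigma\sigma(X^n_{s_0})(W_r-W_{s_0})$; and a martingale part $\int_{s_0}^t\nabla f(X^n_r)\sigma_r\dd W_r$. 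The two $\dd r$ terms are directly $O(n^{-1})$ in $L^m_\omega$ by their length and boundedness. For the martingale, write
\begin{equation*}
\nabla f(X^n_r)\sigma_r=[\nabla f\sigma](X^n_{s_0})+\big(\nabla f(X^n_r)-\nabla f(X^n_{s_0})\big)\sigma(X^n_{s_0})+\nabla f(X^n_r)\nabla\sigma\sigma(X^n_{s_0})(W_r-W_{s_0}),
\end{equation*}
so the first piece integrated against $\dd W$ yields exactly $[\nabla f\sigma](X^n_{s_0})(W_t-W_{s_0})$, which matches the subtracted term. The second piece, via BDG, is bounded by $\|\nabla^2 f\|_{\mathcal{C}^0}\|\sigma\|_{\mathcal{C}^0}\bigl(\int_{s_0}^t\|X^n_r-X^n_{s_0}\|^2_{L^m_\omega}\dd r\bigr)^{1/2}\lesssim n^{-1}$ by \eqref{eq:trivi-1}; the third piece, by the same BDG argument and $\|W_r-W_{s_0}\|_{L^m_\omega}\lesssim n^{-1/2}$, is also $\lesssim n^{-1}$.

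There is no real obstacle here: the only care required is in the bookkeeping of the It\^o expansion, in particular ensuring that the ``Milstein correction'' hidden in $\nabla\sigma\sigma(X^n_{s_0})(W_r-W_{s_0})$ does not introduce an order-$n^{-1/2}$ remainder. This is precisely why the decomposition above extracts the constant term $[\nabla f\sigma](X^n_{s_0})$ first and leaves both remaining pieces with genuine zero-mean or zero-centered Brownian factors, pushing them to the $n^{-1}$ scale expected from the design of the scheme.
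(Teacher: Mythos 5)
Your argument is correct, and it is the natural self-contained one: for \eqref{eq:trivi-1}, split into drift and stochastic integral and use BDG plus Minkowski (with Jensen to reduce $m<2$ to $m=2$), observing that the integrand is uniformly bounded in $L^m_\omega$ since $\|W_r-W_{\kappa_n(r)}\|_{L^m_\omega}\lesssim n^{-1/2}$; for \eqref{eq:trivi-2}, apply It\^o's formula on the single step $[\kappa_n(t),t]$ where the scheme is a genuine It\^o process with frozen coefficients, extract the martingale increment $[\nabla f\sigma](X^n_{\kappa_n(t)})(W_t-W_{\kappa_n(t)})$, and bound the drift, quadratic-variation, and two residual martingale pieces each at order $n^{-1}$ using \eqref{eq:trivi-1} and the Brownian-increment moments. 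The paper does not prove this proposition itself but refers to \cite[Section~7.8.8]{Pages}; your proof is the standard one-step It\^o-expansion argument one would find there, with the key observation being precisely the one you flag — that the Milstein correction $\nabla\sigma\sigma(X^n_{\kappa_n(t)})(W_r-W_{\kappa_n(t)})$ contributes a remainder carrying an extra factor of $(r-\kappa_n(t))^{1/2}$, so all residuals are $O(n^{-1})$ rather than $O(n^{-1/2})$. (One small technical point, which does not affect the outcome: $f\in\mathcal{C}^2$ in the paper's weak-derivative sense is a priori not $C^2$, so It\^o's formula should strictly be applied to mollifications of $f$ and the bound passed to the limit, which is immediate since the constant depends only on $\|f\|_{\mathcal{C}^2}$.)
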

\medskip

Now we are in position of giving the proof of the main theorem.
\begin{proof}[Proof of \cref{thm:main}]
Take $u$ to be the solution to \eqref{PDE} with $f=-b$, $a=\sigma\sigma^*$, and $\theta$ large enough which is determined later.
By \cref{thm:pde-Holder} we know $u\in{\mathcal{C}^{2+\alpha}}$.
  Then by It\^o's formula we have
\begin{align}
    \label{rep-b-X}
    \int_0^t b(X_r)dr=&u(x_0)-u(X_t)+\int_0^t[\nabla u\cdot \sigma](X_r)dW_r+\theta \int_0^tu(X_r)dr.
    %\nonumber\\&+\frac{1}{2}\int_0^t(a_{ij}-A_{ij}\partial_{ij}u)(X_r)dr
\end{align}
Similarly (using the summation convention for repeated indices $i,j,k$)
\begin{align}
    \label{rep-b-Xn}
     \int_0^t b(X_r^n)\dd r=&u(x_0^n)-u(X_t^n)+\theta \int_0^tu(X_r^n)\dd r
     \nonumber\\&+\int_0^t\nabla u(X_r^n)\cdot [\sigma(X_{k_n(r)}^n)+\nabla\sigma\sigma(X_{k_n(r)}^n)(W_r-W_{k_n(r)})]\dd W_r \nonumber\\&+
     \int_0^t\Big(\nabla u(X_r^n)\cdot[b(X_{k_n(r)}^n)-b(X_r^n)] \Big)\dd r
     \nonumber\\&+ \frac{1}{2}\int_0^t\Big(\partial_{ij}u(X_r^n)[a^{ij}(X_{k_n(r)}^n)-a^{ij}(X_{r}^n)]
    +B_r^{ij}(X_{k_n(r)}^n)\partial_{ij}u(X_r^n)\Big)\dd r
    \nonumber\\&+ \int_0^t\Big(\partial_{ij}u(X_r^n)[\sigma^{ki}(X_{k_n(r)}^n)\Sigma_r^{kj}(X_{k_n(r)}^n)]\Big)\dd r,
\end{align}
where $(B_r^{ij}(x))_{1\leq i\leq d,1\leq j\leq d}:=\Sigma_r(x)\Sigma_r(x)^*$ and  $\Sigma_r(x):=\nabla\sigma\sigma(x)(W_r-W_{k_n(r)})$.
 From equations \eqref{eq:SDE} and  \eqref{eq:Milstein-scheme-SDE} we have
 \begin{align*}
    X_t-X_t^n=x_0-x_0^n&+\int_0^t[b(X_r)-b(X_r^n)]\dd r+\int_0^t[b(X_r^n)-b(X_{k_n(r)}^n)]\dd r
    \nonumber\\&+\int_0^t[\sigma(X_r)-\sigma(X_{r}^n)]\dd W_r
     \nonumber\\&+\int_0^t[\sigma(X_{r}^n)-\sigma(X_{k_n(r)}^n)-(\nabla\sigma\sigma)(X_{k_n(r)}^n)(W_r-W_{k_n(r)})]\dd W_r.
 \end{align*}
 We use \eqref{rep-b-X} and \eqref{rep-b-Xn} to rewrite the first integral on the right-hand side. We raise to {$p$-th power for $p\geq 2$} and get
 \begin{align}\label{def:phi-X-Xn}
   \phi_t:=\sup_{s\in[0,t]} |X_s-X_s^n|^p\leq N\big(|x_0-x_0^n|^p+\sum_{\ell=1}^{4}V_t^\ell+\sum_{\ell=1}^{2}I_t^\ell\big),
 \end{align}
with
\begin{align*}
  V_t^1:&=|u(x_0)-u(x_0^n)|^p+\sup_{s\in[0,t]}\big[|u(X_s^n)-u(X_s)|^p+\theta^p\int_{0}^{s}|u(X_r)-u(X_r^n)|\dd r\big]^p,
  \\ V_t^2:&=\sup_{s\in[0,t]}\big|\int_0^s\Big((\nabla u(X_r^n)+I)(b(X_r^n)- b(X_{k_n(r)}^n))\Big)\dd r\big|^p,
   \\ V_t^3:&=\sup_{s\in[0,t]}\big|\int_0^s\partial_{ij}u(X_r^n)[a^{ij}(X_{r}^n)-a^{ij}(X_{k_n(r)}^n)]\dd r
   \big|^p,
     \\ V_t^4:&=\sup_{s\in[0,t]}\big|\int_0^s\Big(B_r^{ij}(X_{k_n(r)}^n)\partial_{ij}u(X_r^n)
     \Big)\dd r\big|^p,
      \\ V_t^5:&=\sup_{s\in[0,t]}\big|\int_0^s\Big(\partial_{ij}u(X_r^n)[\sigma^{ki}(X_{k_n(r)}^n)\Sigma_r^{kj}(X_{k_n(r)}^n)]\Big)\dd r\big|^p,
   \\ I_t^1:&=\sup_{s\in[0,t]}\big|\int_0^s\Big([(\nabla u+I)\cdot \sigma](X_r)-[(\nabla u+I)\cdot\sigma](X_r^n)\Big)\dd W_r\big|^p,
   \\ I_t^2:&=\sup_{s\in[0,t]}\big|\int_0^s\Big([\nabla u(X_r^n)+I]\cdot\big[\sigma(X_r^n)-\sigma(X_{k_n(r)}^n)-\nabla \sigma\sigma(X_{k_n(r)}^n)(W_r-W_{k_n(r)})\big]\Big)\dd W_r\big|^p.
\end{align*}

Now by \eqref{est:pde-Holder} we can take $\theta$ to be large enough so that $N\|\nabla u\|_{\mathbb{B}}\leq 1/4$, then we have the obvious bound
\begin{align}
\label{est:V1}
NV_t^1\leq \frac{1}{2}\sup_{s\in[0,t]}|X_s-X_s^n|^p+\theta^p\Vert u\Vert_{\mathcal{C}^1}\int_{0}^{t}\sup_{s\in[0,r]}|X_s-X_s^n|^p\dd r=\frac{1}{2}\phi_t+\theta^p\int_{0}^{t}\phi_r\dd r.
\end{align}
Applying \cref{coro-estimate-X} with $h=\nabla u+I$ and $f=b$, we get  %\todo{To keep in mind for later: either an $\eps$ or a $\log$ is missing}
 \begin{align}\label{est:V2}
   \Vert V_t^2\Vert_{L^1_\omega}\lesssim  \big(n^{-\frac{1+\alpha}{2}+\epsilon}\big)^p.
 \end{align}
Similarly, but with the roles played by $h=\partial_{ij} u$, $f=a^{ij}$, we get
 \begin{align}\label{est:V3}
   \Vert V_t^3\Vert_{L_\omega^1}\lesssim&  \big(n^{-\frac{1+\alpha}{2}+\epsilon}\big)^p.
 \end{align}
  Since $\mE |B^{ij}(X_{k_n(r)}^n) |^p\lesssim \mE|W_r-W_{k_n(r)}|^{2p}\lesssim n^{-p}$, we also have
 \begin{align}\label{est:V4}
  \Vert V_t^4\Vert_{L^1_\omega}\lesssim \int_0^t\mE|B^{ij}(X_{k_n(r)}^n)|^p\dd r\lesssim Nn^{-p}.
 \end{align}
We manipulate the term $V^5$ as%\todo{Something is wrong with the indices below}.
 \begin{align*}
 \int_0^t&\Big(\partial_{ij}u(X_r^n)[\sigma^{ki}(X_{k_n(r)}^n)\Sigma^{kj}(X_{k_n(r)}^n)]\Big)\dd r\\
 =& \int_0^t\Big(\big[\partial_{ij}u(X_r^n)\sigma^{ki}(X_{r}^n)\big(\sigma^{ki}(X_{r}^n)
 -\sigma^{ki}(X_{k_n(r)}^n)\big)\big]
 \\&\quad\quad-\big[\partial_{ij}u(X_r^n)\big(\sigma^{ki}(X_{r}^n)-\sigma^{ki}(X_{k_n(r)}^n)\big)^2\big]
  \\&\quad\quad-\big[\partial_{ij}u(X_r^n)\sigma^{ki}(X_{k_n(r)}^n)\big(\sigma^{ki}(X_{r}^n)
  -\sigma^{ki}(X_{k_n(r)}^n)-\Sigma^{kj}(X_{k_n(r)}^n)\big)\big]\Big)\dd r
  \\=&:v^{51}_t-v^{52}_t-v^{53}_t.
 \end{align*}
Applying \cref{coro-estimate-X} with $h=\partial_{ij}u\sigma^{ki}$ and $f=\sigma^{ki}$, we get
 \begin{align*}
   \big \Vert| \sup_{s\in[0,t]}|v^{51}_s|^p\big\Vert_{L_\omega^1}\lesssim&  %N\Vert u\Vert_{\mathcal{C}^{2+\alpha}}^m\Vert \sigma\Vert_{\mathcal{C}^\alpha}^m
   \big(n^{-\frac{1+\alpha}{2}+\epsilon}\big)^p.
 \end{align*}
From \eqref{eq:trivi-1} it is immediate that
 \begin{align*}
    \big\Vert \sup_{s\in[0,t]}|v^{52}_s|^p\big\Vert_{L_\omega^1}
    \lesssim n^{-p}.
 \end{align*}
Finally, from \eqref{eq:trivi-2} with $f=\sigma^{ik}$
 \begin{align*}
   \big\Vert \sup_{s\in[0,t]}|v^{53}_s|^p\big\Vert_{L_\omega^1}\lesssim&  % N\Vert u\Vert_{\mathcal{C}^{2+\alpha}}^m\Vert \sigma\Vert_{\mathcal{C}^\alpha}^m
   n^{-p}.
 \end{align*}
Therefore
 \begin{align}\label{est:V5}
   \Vert V_t^5\Vert_{L_\omega^1}\lesssim&
   \big(n^{-\frac{1+\alpha}{2}+\epsilon}\big)^p.
 \end{align}
From  the pathwise  BDG inequality \cite[Theorem {3}]{Pietro}
it follows that there exist martingales $M^1$ and $M^2$ such that with probability  one
%\todo{Tiny fix here about the martingales remaining as discussed}
 \begin{align}\label{est:I1}
 |I_t^1|&\leq N \Big(\int_{0}^{t}\big|\nabla[(\nabla u+I)\cdot \sigma]\big|^2\big|X_r-X_r^n\big|^2\dd r\Big)^{\frac{p}{2}}+M_t^1
 \leq N \int_{0}^{t}\phi_r\dd r+M_t^1,
 \end{align}
 and
  \begin{align}\label{est:I2}
|I_t^2| \leq N  &\Big(\int_0^t\Big|[\nabla u(X_r^n)+I]\cdot\big[\sigma(X_r^n)-\sigma(X_{k_n(r)}^n)-\nabla \sigma\sigma(X_{k_n(r)}^n)(W_r-W_{k_n(r)})\big]\Big|^2\dd r\Big)^{\frac{p}{2}}
\nonumber\\&+M^2_t
\nonumber\\ \leq N & \int_0^t\Big|\sigma(X_r^n)-\sigma(X_{k_n(r)}^n)-\nabla \sigma\sigma(X_{k_n(r)}^n)(W_r-W_{k_n(r)})\Big|^p\dd r+M_t^2\nonumber\\=:\quad &I_t^{2,1}+M_t^2.
 \end{align}
Once again from \eqref{eq:trivi-2} we have the bound
 \begin{align}\label{est:I21}
    \Vert I_t^{2,1}\Vert_{L_\omega^1}\lesssim & n^{-p}.
 \end{align}

 Now we let
 \begin{align}\label{def:V-M}
   V_t:=V_t^2+V_t^3+V_t^4+I_t^{2,1},\quad M_t:=M_t^1+M_t^2,
 \end{align}
 then with \eqref{est:V1}, \eqref{est:I1} and \eqref{est:I2}  we can write
 \begin{align*}
   \phi_t\leq N\big(|x_0-x_0^n|^p+\int_{0}^{t}\phi_r\dd r+V_t\big)+M_t.
 \end{align*}
From \eqref{est:V2}, \eqref{est:V3}, \eqref{est:V4}, \eqref{est:V5} and \eqref{est:I21} we have the estimate
 \begin{align}\label{est:all-V}
 \Vert V_t\Vert_{L_\omega^1}\lesssim \big(n^{-\frac{1+\alpha}{2}+\epsilon}\big)^p.
 \end{align}
 { Notice that $\phi_t$ defined in \eqref{def:phi-X-Xn} and $V_t$ defined in \eqref{def:V-M} are both nonnegative nondecreasing processes, and $M_t$ from  \eqref{def:V-M} is $\mathcal{F}_t$-martingale. Therefore from an appropriate version of Gronwall's inequality (for an ``appropriate version'' see e.g \cite[{Lemma 3.8}]{LL} and  \eqref{est:all-V} and
the claimed bound \eqref{est:Milstein-bounds-Holder} follows.}
 \end{proof}
 %%%%%%%%%%%%%%%%%%%%%%%%%%%%%%%%%%%%%%%%%%%%%%%%%%%%%%%%%%%%%%%%%%%%%%%%%%%%%%%%%%%%%%%

%%%%%%%%%%%%%%%%%%%%%%%%%%%%%%%%%%%%%%%%%%%%%%%%%%%%%%%%%%%%%%%%%%%%%%%%%%%%%%%%%%%%%%%%%%%%%%%%%%%%%%%%%%%%%%%%%%%%%%%
\appendix

\section{Proof for regularity estimates of PDEs }
\label{App:PDE}

\begin{proof}[Proof of \cref{thm:pde-Holder}]
 As for the existence and uniqueness, it was already shown in \cite[Theorem 4.3.1]{Kry}.
 Therefore it is enough to show \eqref{est:pde-Holder}. We start with the case that $a$ is a constant matrix and first order term of \eqref{PDE} vanishes.
  We stress that all proportionality constants in the relations $\lesssim$ below depend on $d,\alpha,\Vert b\Vert_{\mathcal{C}^\alpha},\lambda,$ and not on $\theta$.
  \begin{itemize}
    \item[Case I.] $a$ is a constant positive definite matrix and $b\equiv0$. %\todo{I guess we don't really have $a=cI$ for the frozen guy. So the first case actually probably should be $a(x)\equiv A$ is a constant nondegenerate (but not necessarily identity) matrix}
     \end{itemize}
     As explained in \cite[Proof of Ch.1, Section 6, 2. Lemma]{Kry-Sobolev}, for a general non-degenerate constant matrix $a$, by changing of coordinate it is enough to consider the following resolvent equation
     \begin{align}\label{PDE:simple}
      \frac{1}{2}\Delta u-\theta u=f.
     \end{align}
    The solution to the above resolvent equation can be represented as
    \begin{align*}
      u=\int_{0}^{\infty} e^{-\theta t}P_{t}f\dd t.
    \end{align*}
     {  Firstly, for $\gamma,\alpha>0$, based on the  the fact that for $t\geq1$
    \begin{align*}
     \sup_{t\geq 1}\|P_{t}f\|_{\mathcal{C}^{\gamma}}  \lesssim  \|P_{1}f\|_{\mathcal{C}^{\gamma}}%\lesssim \|P_{\frac{1}{2}}f\|_{\mathcal{C}^{0}}
     \lesssim \|f\|_{\mathcal{C}^{\alpha}},
    \end{align*}
    and for $t\in(0,1]$
    \begin{align*}
        \|P_{t}f\|_{\mathcal{C}^{\gamma}} \lesssim t^{-\frac{\gamma-\alpha}{2}}\|f\|_{\mathcal{C}^{\alpha}},
    \end{align*} together with $ e^{-\theta t} t^{\iota}\lesssim_\iota 1$ for any $\iota>0$ uniformly in $\theta\geq1$, $t>0$, we get for $t\geq 0$
    \begin{align}\label{eat:exp-ho}
         e^{-\theta t}\|P_{t}f\|_{\mathcal{C}^{\gamma}} \lesssim t^{-\frac{\gamma-\alpha}{2}}\|f\|_{\mathcal{C}^{\alpha}}.
    \end{align}
   So we have for $\gamma\in[\alpha,\alpha+2)$ and  $\theta\geq1$ %{\color{blue}for $\gamma\in[\alpha,\alpha+2)$ and for any $K>0$}
    \begin{align}\label{est:pde-constant}
        \Vert u\Vert_{\mathcal{C}^{\gamma}}\leq& \big(\int_{0}^{1} +\int_{1}^{\infty}\big)e^{-\theta t}\|P_{t}f\|_{\mathcal{C}^{\gamma}}\dd t
     \nonumber\\ \lesssim& \int_{0}^{1} e^{-\theta t}t^{-\frac{\gamma-\alpha}{2}}\dd t\|f\|_{\mathcal{C}^{\alpha}}+\sup_{t\geq 1}\|P_{t}f\|_{\mathcal{C}^{\gamma}}\int_1^\infty e^{-\theta t}\dd t
 \nonumber\\ \lesssim&\theta^{\frac{\gamma-(2+\alpha)}{2}}\|f\|_{\mathcal{C}^{\alpha}}+\|f\|_{\mathcal{C}^{\alpha}}\theta^{-1} e^{-\theta}\lesssim \theta^{\frac{\gamma-(2+\alpha)}{2}}\|f\|_{\mathcal{C}^{\alpha}}.
    \end{align}}
    { For $\gamma=\alpha+2$, following the proof of \cite[Theorem 5.19]{AL}, for any $\xi>0$, we write $u=:\kappa_1^\xi+\kappa_2^\xi=:\int_{0}^{\xi} e^{-\theta t}P_{t}f\dd t+\int_{\xi}^{\infty} e^{-\theta t}P_{t}f\dd t$.
    %and noticing $ e^{-\theta t} t^{-\iota}\lesssim1$ for any $\theta\geq1$, $t>0$, $\iota>0$,
    From \eqref{eat:exp-ho} we have  for any $\nu\in(0,2)$
    \begin{align*}
      \|\kappa_1^\xi\|_{\mathcal{C}^{\nu+\alpha}}&\lesssim %\int_0^\xi (\sup_{t\geq 1}\|P_{t}f\|_{\mathcal{C}^{\nu+\alpha}})\vee(\mathbf{I}_{t\leq 1}\|P_{t}f\|_{\mathcal{C}^{\nu+\alpha}}e^{-\theta t})e^{-\theta t}\dd t
    \int_0^\xi t^{-\frac{\nu}{2}}\dd t\|f\|_{\mathcal{C}^\alpha}\lesssim \xi^{1-\frac{\nu}{2}}\|f\|_{\mathcal{C}^\alpha},\\
      \|\kappa_2^\xi\|_{\mathcal{C}^{2+\nu+\alpha}}&
      %\lesssim   \int^\infty_\xi  \sup_{t\geq \xi}\|P_{t}f\|_{\mathcal{C}^{2+\nu+\alpha}}e^{-\theta t}\dd t
      \lesssim\int^\infty_\xi t^{-1-\frac{\nu}{2}}\dd t\|f\|_{\mathcal{C}^\alpha}\lesssim\xi^{-\frac{\nu}{2}}\|f\|_{\mathcal{C}^\alpha}.
    \end{align*}
    By the K-method of interpolation  (see e.g. \cite[Example 5.15, Proof of Theorem 5.19 (p.149)]{AL}) we can conclude that
    \begin{align}
        \label{est:critical}\|u\|_{\mathcal{C}^{2+\alpha}}\lesssim\|f\|_{\mathcal{C}^\alpha}.
    \end{align}
    Hence by \eqref{est:pde-constant} and \eqref{est:critical} we can  conclude for $\gamma\in[\alpha,\alpha+2]$ and $\theta\geq1$
    \begin{align*}
        \Vert u\Vert_{\mathcal{C}^{\gamma}}\lesssim \theta^{\frac{\gamma-(2+\alpha)}{2}}\|f\|_{\mathcal{C}^{\alpha}}.
    \end{align*}
    %%%%%%%%%%%%%%%%%%%%%%%%%%
   }
    \begin{itemize}
    \item[Case II.]  $a$ satisfies \cref{ass-a} and $b\equiv0$.
      \end{itemize}
      Here we apply the frozen coefficient method. Let $\xi$  be a nonnegative smooth function such that $\xi\equiv 1$ near the origin  with   support in $B_\delta=\left\{x\in\mR^d:|x|\leq \delta\right\}$, where $\delta>0$ is to be chosen later. Define for $z\in\mR^d$
            \begin{align}\label{def:z}
        \xi_{z}(x):=\xi(x-z),\quad a_z:=a(z), \quad u_z(x):=\xi_{z}(x)u(x),\quad f_z(x):=\xi_{z}(x)f(x).
      \end{align}
      Then we can observe that
 \begin{align*}
    a^{ij}_z\partial_{ij}u_z- \theta u_z=g_z
 \end{align*}
 where
 \begin{align*}
     g_z:=&f_z-(a^{ij}\partial_{ij}u )\xi_{z}+a^{ij}_z\partial_{ij}u_z
     \\=&f_z-(a^{ij}-a^{ij}_z)\partial_{ij}u\cdot\xi_{z}+a^{ij}_z(\partial_iu\partial_j\xi_{z}
     +\partial_ju\partial_i\xi_{z}+u\partial_{ij}\xi_{z}).
 \end{align*}

      %\todo{to fix regularities (e.g. $\|u\|_{\mathcal{C}^2}$ to $\|u\|_{\mathcal{C}^{2+\alpha}}$, and replace $C_i$ by $\lesssim$.}
  We use the fact that  $a$ satisfies \cref{ass-a} then get
       \begin{align*}
         \Vert g_z\Vert_{\mathcal{C}^\alpha}\lesssim  & \big(  \Vert f_z\Vert_{\mathcal{C}^\alpha} +\Vert a\Vert_{\mathcal{C}^\alpha} \delta^\alpha \Vert u_z\Vert_{\mathcal{C}^{2+\alpha}}+\lambda^{-1}(\Vert u_z\Vert_{\mathcal{C}^{1+\alpha}}\Vert\partial_i\xi_{z}\Vert_{\mathcal{C}^\alpha}
         +\Vert\partial_{ij}\xi_{z}\Vert_{\mathcal{C}^\alpha}\Vert u_z\Vert_{\mathcal{C}^\alpha})\big)
       \\\lesssim &\big(  \Vert f_z\Vert_{\mathcal{C}^\alpha} + \delta^{\alpha}\Vert u_z\Vert_{\mathcal{C}^{2+\alpha}}+\lambda^{-1}(\delta^{-(1+\alpha)}\Vert u_z\Vert_{\mathcal{C}^{1+\alpha}}+\delta^{-(2+\alpha)}\Vert u_z\Vert_{\mathcal{C}^{\alpha}})\big).
       \end{align*}
       By interpolation, we have that for any $\epsilon>0$ there exists $C_\epsilon>0$ so that
       \begin{align*}
         \Vert u_z\Vert_{\mathcal{C}^{1+\alpha}}\leq C_\epsilon   \Vert u_z\Vert_{\mathcal{C}^{\alpha}}+\epsilon  \Vert u_z\Vert_{\mathcal{C}^{2+\alpha}},
       \end{align*}
       it implies that
       \begin{align}\label{est:g}
      \Vert g_z\Vert_{\mathcal{C}^\alpha}\lesssim \big(  \Vert f_z\Vert_{\mathcal{C}^\alpha} + (\delta^{\alpha}+\delta^{-(1+\alpha)}\epsilon)\Vert u_z\Vert_{\mathcal{C}^{2+\alpha}}+(C_\epsilon\delta^{-(1+\alpha)}+\delta^{-(2+\alpha)})\Vert u_z\Vert_{\mathcal{C}^{\alpha}}\big).
       \end{align}
       First by taking $\gamma=2+\alpha$ {and applying \eqref{est:critical}
       then % for $1<\theta$
       \begin{align*}
          \Vert u_z\Vert_{\mathcal{C}^{2+\alpha}}&\leq C_1 %\theta^{\frac{\alpha}{2}}
          \Vert g_z\Vert_{\mathcal{C}^\alpha}
          \\&\leq C_2 %\theta^{\frac{\alpha}{2}}
          \big( \Vert f_z\Vert_{\mathcal{C}^\alpha} + (\delta^{\alpha}+\delta^{-(1+\alpha)}\epsilon)\Vert u_z\Vert_{\mathcal{C}^{2+\alpha}}+(C_\epsilon\delta^{-(1+\alpha)}+\delta^{-(2+\alpha)})\Vert u_z\Vert_{\mathcal{C}^{\alpha}}\big)
       \end{align*}
       with some $C_1,C_2\lesssim 1$.
     We first fix $\delta$ to be small enough and then $\epsilon$ to be small enough such that }$C_2\big(\delta^{\alpha}+\delta^{-(1+\alpha)}\epsilon\big)<\frac{1}{2}$. Then we get
        \begin{align}\label{est:u-C2alpha}
           \Vert u_z\Vert_{\mathcal{C}^{2+\alpha}}\lesssim \theta^{\frac{\alpha}{2}}\big(\Vert f_z\Vert_{\mathcal{C}^\alpha}+\Vert u_z\Vert_{\mathcal{C}^{\alpha}}\big).
       \end{align}
       Plug the above into \eqref{est:g} then get
       \begin{align}\label{est:g-u}
         \Vert g_z\Vert_{\mathcal{C}^\alpha}\lesssim \Vert f_z\Vert_{\mathcal{C}^\alpha}+\Vert u_z\Vert_{\mathcal{C}^{\alpha}}.
       \end{align}
       We again use \eqref{est:pde-constant}  by taking $\gamma=\alpha$, together with \eqref{est:g-u}
       \begin{align*}
          \Vert u_z\Vert_{\mathcal{C}^\alpha}\leq C_3\theta^{-1} \Vert g_z\Vert_{\mathcal{C}^\alpha}\leq C_4\theta^{-1}\big( \Vert f_z\Vert_{\mathcal{C}^\alpha}+\Vert u_z\Vert_{\mathcal{C}^{\alpha}}\big),
       \end{align*}
       again here the positive constants $C_3,C_4\lesssim1$. Take $\theta$ to be large so that $C_4\theta^{-1}\leq \frac{1}{2}$ then we have
       \begin{align}\label{est:u-alpha}
           \Vert u_z\Vert_{\mathcal{C}^\alpha}\lesssim \Vert f_z\Vert_{\mathcal{C}^\alpha}.
       \end{align}

      Combining  \eqref{est:pde-constant}, \eqref{est:g-u} and \eqref{est:u-alpha} yields
      \begin{align*}
         \Vert u_z\Vert_{\mathcal{C}^\gamma} \lesssim &  \theta^{\frac{\gamma-(2+\alpha)}{2}}\|g_z\|_{\mathcal{C}^{\alpha}}
         \lesssim\theta^{\frac{\gamma-(2+\alpha)}{2}}\|f_z\|_{\mathcal{C}^{\alpha}}.
      \end{align*}
      { We recall from \cite[Lemma 4.1.1]{Kry} the following holds (note that $\delta$ at this point is already fixed):
      \begin{align*}
         \Vert u\Vert_{\mathcal{C}^\gamma}\leq% C(\gamma,d)\sup_z\Big(\max_{|k|=\lfloor\gamma\rfloor}\sup_{x\neq y}\frac{|\partial^k u_z(x)-\partial^k u_z(y)|}{|x-y|^{\gamma-\lfloor\gamma\rfloor}}+\sup_x|u_z(x)|\Big)\leq
         C(\gamma, d)\sup_z\Vert u_z\Vert_{\mathcal{C}^\gamma}.
      \end{align*}
      On the other hand,  by the elementary inequality $|u(x)v(x)-u(y)v(y)|\leq |u(x)||v(x)-v(y)|+ |v(y)||u(x)-u(y)|$ we have
      \begin{align*}
        \sup_z\Vert f_z\Vert_{\mathcal{C}^\alpha}\lesssim  \Vert f\Vert_{\mathcal{C}^\alpha}.
      \end{align*}
      }
      Therefore, taking supermum over $z\in\mR^d$ yields
      \begin{align}\label{est:pde-aij}
         \Vert u\Vert_{\mathcal{C}^\gamma} \lesssim\theta^{\frac{\gamma-(2+\alpha)}{2}}\|f\|_{\mathcal{C}^{\alpha}}.
      \end{align}
       \begin{itemize}
    \item[Case III.]  $a$ satisfies \cref{ass-a} and $b\in\mathcal{C}^{\alpha}$.
  \end{itemize}
  By \eqref{est:pde-aij}, for $\theta>\theta_0$  the following holds
  \begin{align}\label{est:aij-b}
    \Vert u\Vert_{\mathcal{C}^\gamma} \lesssim \theta^{\frac{\gamma-(2+\alpha)}{2}}\big(\|f\|_{\mathcal{C}^{\alpha}}+\|b\cdot\nabla u\|_{\mathcal{C}^{\alpha}}\big).
  \end{align}
 By taking $\gamma=1+\alpha$, it is evident that
  \begin{align*}
   \Vert u\Vert_{\mathcal{C}^{1+\alpha}} \leq& C \theta^{-\frac{1}{2}}\big(\|f\|_{\mathcal{C}^{\alpha}}+\|b\cdot\nabla u\|_{\mathcal{C}^{\alpha}}\big)
   \leq C\theta^{-\frac{1}{2}}\big(\|f\|_{\mathcal{C}^{\alpha}}+\|b \|_{\mathcal{C}^{\alpha}}\Vert u\Vert_{\mathcal{C}^{1+\alpha}}\big)
  \end{align*}
with some $C\lesssim 1$.  Now we take $\theta$ to be larger enough so that
  \begin{align*}
    C\theta^{-\frac{1}{2}}\|b \|_{\mathcal{C}^{\alpha}}\leq \frac{1}{2},
  \end{align*}
  we denote this $\theta$ as $\theta^*$ (depending on $\|b \|_{\mathcal{C}^{\alpha}},\alpha,\lambda$). Therefore for $\theta\geq\theta^*$, we get from \eqref{est:aij-b} that
  \begin{align*}
     \Vert u\Vert_{\mathcal{C}^\gamma} \lesssim \theta^{\frac{\gamma-(2+\alpha)}{2}}\|f\|_{\mathcal{C}^{\alpha}}.
  \end{align*}
  We get the desired estimate \eqref{est:pde-Holder}.
\end{proof}

\section*{Acknowledgements}
\noindent This research was funded in whole or in part by the Austrian Science Fund (FWF)    [10.55776/P34992]. For open access purposes, the author has applied a CC BY public copyright license to any author accepted manuscript version arising from this submission.

\end{document}